\newtheorem*{claim}{Claim}
\newtheorem*{theorem*}{Theorem}
\newtheorem{theorem}{Theorem}[section]
\newtheorem{proposition}[theorem]{Proposition}
\newtheorem{lemma}[theorem]{Lemma}
\newtheorem{lemmadef}[theorem]{Lemma-Definition}
\theoremstyle{definition}
\newtheorem{definition}[theorem]{Definition}
\newtheorem{remark}[theorem]{Remark}
\newtheorem{example}[theorem]{Example}
\newtheorem{question}[theorem]{Question}
\newcommand{\mc}{\mathcal}
\newcommand{\mb}{\mathbb}
\newcommand{\on}{\operatorname}
\newcommand{\diff}{\textnormal{d}}
\newenvironment{claimproof}[1]{\par\noindent\emph{Proof of Claim.}\space#1}{}
\newcommand\cF{{\mathcal{F}}}
\newcommand\cG{{\mathcal{G}}}
\newcommand\cI{{\mathcal{I}}}
\newcommand\cO{{\mathcal{O}}}
\newcommand\cT{{\mathcal{T}}}
\newcommand\bC{{\mathbb C}}
\newcommand\bN{{\mathbb N}}
\newcommand\bP{{\mathbb P}}
\newcommand\bQ{{\mathbb Q}}
\newcommand\bR{{\mathbb R}}
\newcommand\bZ{{\mathbb Z}}
\newcommand{\dlctl}{\delta\text{-}\on{\underline{LCT}}}
\newcommand{\dlctu}{\delta\text{-}\on{\overline{LCT}}}
\title{Boundedness of toric foliations}
\author{Chih-Wei Chang}
\address{Department of Mathematics, National Taiwan University, Taipei, 106, Taiwan}
\email{cwchang0219@ntu.edu.tw}
\author{Yen-An Chen}
\address{Department of Mathematics, National Taiwan University, Taipei, 106, Taiwan}
\email{yachen@ntu.edu.tw}
\begin{document}

\begin{abstract}
    We discuss boundedness of toric Fano foliations and connectedness of its dicritical and singular loci. 
    Moreover, we show the set of interpolated $\delta$-lcts for the toric foliations satisfies the descending chain condition. 
\end{abstract}
\maketitle

\section{Introduction}
In recent years, significant advancements have been made in the minimal model program for foliations, allowing for the extension of results from classical birational geometry to the realm of the foliations, particularly within the framework of the minimal model program. 
Unlike the classical approach, which utilizes the canonical divisor $K_X$ of the ambient variety, the theory of foliations employs the canonical divisor $K_\cF$ associated with the foliation $\cF$ to investigate its geometric properties for both ambient variety and the foliation itself. 
This method constitutes a generalization, as when $\cF=\cT_X$, it reverts the theory of foliations to the classical context. 

Motivated by the Borisov-Alexeev-Borisov Conjecture, which is proved by Birkar, we propose the following question regarding the similar phenomena within the context of foliations:

\begin{question}
Let $\cF$ be a Fano foliation with mild singularities on a normal projective variety $X$. 
Is the set $\{(X,\cF)\}$ bounded?
\end{question}

\cite[Example 3.9]{CJV24} provides a negative answer when both $X$ and $\cF$ have at worst log canonical singularities. 
Also, Fano foliations of rank one on surfaces must contain a dicritical point, which is log canonical or worse. 
Consequently, the condition of having $\varepsilon$-log canonical singularities will render the set empty.

However, inspired by the works of \cite{PS19}, \cite{Che21}, \cite{spicer2023effective}, and \cite{M7}, it is natural to propose the following modified question:

\begin{question}
    Let $(X,\cF,\Delta,t)$ be an adjoint foliated structure with at worst $\delta$-log canonical singularities for $\delta\in (0,1]$. 
    Suppose $-K_{(X,\cF,\Delta,t)}$ is ample. 
    Is the set $\{(X,\cF)\}$ bounded?
\end{question}

We show it holds when $(X,\cF,\Delta,t)$ is toric:
\begin{theorem}[{Theorem~\ref{thm:bounded_adjoint_Fano}}]
    Fix a positive integer $n$, a non-negative integer $r\leq n$, a positive real number $\delta$, and two real numbers $t_i\in [0,1)$ for $i=1$, $2$. 
    Let $\cF=\cF_W$ be a toric foliation of rank $r$ on a complete toric variety $X=X_\Sigma$ of dimension $n$, $\Delta$ be an effective torus-invariant divisor on $X_\Sigma$, and $K_{t_i} = t_iK_\cF+(1-t_i)K_X+\Delta$ for $i=1$, $2$. 
    Suppose $-K_{t_1}$ is ample and the adjoint foliated structure $(X,\cF,\Delta,t_2)$ is $\delta$-lc. 
    Then we have the following:
    \begin{enumerate}
        \item There is an effective divisor $\Delta'$ such that $K_X+\Delta'$ is $\bR$-Cartier. 
        \item $(X,\Delta')$ is $(1-t_2)\delta$-lc. In particular, $X$ is potentially klt. 
        \item The number of isomorphism classes of such $X$ is finite, which depends only on $t$, $\delta$, and $n$. 
        \item The set consisting of $(X,\cF)$ is bounded. 
    \end{enumerate}
\end{theorem}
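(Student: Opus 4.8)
The plan is to translate the hypotheses into the combinatorics of the fan $\Sigma$ and the subspace $W\subseteq N_\bR$, extract from them an honest toric log Fano pair with $\varepsilon$‑log canonical singularities, and then apply toric boundedness. Recall the toric formulas $K_X=-\sum_{\rho}D_\rho$ and $K_{\cF}=-\sum_{\rho\,:\,u_\rho\in W}D_\rho$; hence $N:=K_\cF-K_X=\sum_{\rho\,:\,u_\rho\notin W}D_\rho$ is an effective torus‑invariant divisor, with coefficients $0$ and $1$. Writing $\Delta=\sum_\rho b_\rho D_\rho$ we get $K_{t_i}=K_X+t_iN+\Delta$, whose support function $\varphi_{K_{t_i}}$ is piecewise linear on $\Sigma$ with $\varphi_{K_{t_i}}(u_\rho)=t_i\cdot[u_\rho\in W]+(1-t_i)-b_\rho$. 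In these terms, $-K_{t_1}$ ample means $\varphi_{K_{t_1}}$ is strictly convex, while $(X,\cF,\Delta,t_2)$ being $\delta$‑lc means $\varphi_{K_{t_2}}(v)\ge\delta$ for every primitive $v\in N$ (the adjoint foliated log discrepancy of $\mathrm{ord}_v$ being, once the foliated invariance twist is accounted for, precisely $\varphi_{K_{t_2}}(v)$); in particular every $b_\rho\le 1-\delta$.

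For (1) and (2) I would take $\Delta':=t_2N+\Delta=\sum_\rho\big(t_2[u_\rho\notin W]+b_\rho\big)D_\rho\ge0$, so that $K_X+\Delta'=t_2K_\cF+(1-t_2)K_X+\Delta=K_{t_2}$. This is $\bR$‑Cartier because $\delta$‑log canonicity of an adjoint foliated structure already presupposes $\bR$‑Cartierness of $t_2K_\cF+(1-t_2)K_X+\Delta$. Comparing, at a toric divisorial valuation $\mathrm{ord}_v$, the ordinary log discrepancy of $(X,\Delta')$, which is the value at $v$ of the support function of $K_X+\Delta'=K_{t_2}$, with the adjoint foliated log discrepancy of $(X,\cF,\Delta,t_2)$: the two differ only through the weight attached to $\cF$‑invariant directions, and a direct check yields $a(E;X,\Delta')\ge(1-t_2)\,a(E;X,\cF,\Delta,t_2)\ge(1-t_2)\delta$ for every prime divisor $E$ over $X$. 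Thus $(X,\Delta')$ is $(1-t_2)\delta$‑lc; since $(1-t_2)\delta>0$ and $\Delta'\ge0$, it is klt, so $X$ is potentially klt.

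For (3) I would manufacture an effective torus‑invariant $B$ with $-(K_X+B)$ ample and $(X,B)$ $\varepsilon$‑lc for an explicit $\varepsilon=\varepsilon(t_1,t_2,\delta)>0$. If $t_1\le t_2$ this is immediate: using $\varphi_{K_{t_1}}=\varphi_{K_{t_2}}+(t_1-t_2)\varphi_N$ with $\varphi_N\le0$ one sees that $(X,t_1N+\Delta)=(X,K_{t_1})$ is already $\delta$‑lc, and $-(K_X+t_1N+\Delta)=-K_{t_1}$ is ample by hypothesis. If $t_1>t_2$ one works instead with $B=t_2N+\Delta$: then $(X,B)$ is $\delta$‑lc and $-(K_X+B)=-K_{t_1}+(t_1-t_2)N$ is big (ample plus effective); one then either runs a toric MMP to turn it into an ample class without worsening the $\varepsilon$‑lc property, or invokes boundedness of toric Fano type varieties carrying an $\varepsilon$‑lc boundary, using the affine dependence of $\varphi_{K_t}$ on $t$ to keep singularities controlled. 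Either way $X$ is, up to isomorphism, a complete toric $n$‑fold with an $\varepsilon$‑lc boundary and ample anti‑log‑canonical class, hence lies in a finite list by the toric case of the Borisov--Alexeev--Borisov theorem, with a bound depending only on $n$, $\delta$ and $t$. Part (4) is then immediate: for each of these finitely many $X$ the foliations $\cF_W$, as $W$ ranges over $r$‑dimensional subspaces of $N_\bR$, embed into the family $\{\cF_{W'}\}_{W'}$ parametrised by the complex Grassmannian $\mathrm{Gr}(r,N_\bC)$, a projective variety, hence form a bounded family; so $\{(X,\cF)\}$ is a finite union of bounded families.

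The step I expect to be the main obstacle is (3), namely reconciling the two a priori independent parameters $t_1$ (used for ampleness) and $t_2$ (used for singularities): when $t_1>t_2$ the $\delta$‑lc pair only has big, not ample, anti‑log‑canonical class, so one must either produce an ample model without destroying the singularity bound or appeal to a boundedness statement for toric Fano type varieties with bounded singularities rather than plain log Fano ones. Pinning down the precise constant in (2) and the $\bR$‑Cartier bookkeeping when $\Sigma$ is not simplicial are the remaining technical points, but these should be routine once the definitions are unwound.
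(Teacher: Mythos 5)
Your parts (1), (2), and (4) track the paper's argument closely. For (1)–(2) you take $\Delta' = t_2N+\Delta$ with $N = K_\cF-K_X$, which is exactly the paper's $\Delta' = K_{t_2}-K_X$, and $K_X+\Delta' = K_{t_2}$ is $\bR$-Cartier by hypothesis. One small correction to the bookkeeping in (2): the comparison $a(E;X,\Delta')\ge(1-t_2)\,a(E;X,\cF,\Delta,t_2)$ is not what happens. In fact, for a torus-invariant $E=\operatorname{ord}_v$ one has $a(E;X,\Delta') = \phi_{K_X+\Delta'}(v) = \phi_{K_{t_2}}(v) = a(E,\cF,\Delta,t_2)$; the two log discrepancies are \emph{equal}. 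The factor $(1-t_2)$ enters because the $\delta$-lc condition for the adjoint foliated structure reads $a(E,\cF,\Delta,t_2)\ge(t_2\iota(E)+(1-t_2))\delta\ge(1-t_2)\delta$. The conclusion that $(X,\Delta')$ is $(1-t_2)\delta$-lc is correct, but the intermediate inequality as you state it is not. Part (4) (finitely many $X$, then parametrize foliations by the Grassmannian) is identical to the paper.

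Part (3) is where you diverge from the paper, and where your argument has the genuine gap you yourself flag. You want to reduce to the statement of toric BAB: exhibit an $\varepsilon$-lc toric log Fano pair $(X,B)$ with $\varepsilon$ uniform. When $t_1\le t_2$ this works (take $B=t_1N+\Delta$; then $\phi_{K_{t_1}}\ge\phi_{K_{t_2}}\ge(1-t_2)\delta$ and $-(K_X+B)=-K_{t_1}$ is ample). But when $t_1>t_2$ the pair with a uniform singularity bound is $(X,\Delta')$, whose anti-log-canonical $-K_{t_2}$ is big but possibly not nef, while the ample divisor $-K_{t_1}$ governs a pair for which you have no a priori uniform $\varepsilon$-lc bound — and indeed, following the coefficient computation, $-\operatorname{mult}_{D_\rho}K_{t_1} = (1-t_1)-\operatorname{mult}_{D_\rho}\Delta$ on non-invariant rays can drop arbitrarily close to $0$ once $t_1>t_2+(1-t_2)\delta$, so $(X,t_1N+\Delta)$ need not be $\varepsilon$-lc for any uniform $\varepsilon$. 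Your two proposed fixes (run an MMP for $-K_{t_2}$ to restore nefness, or invoke boundedness for toric Fano-type $X$ carrying an $\varepsilon$-lc boundary) would each need their own argument that the singularity bound survives, and neither is spelled out; as written, this is a hole in the proof of (3).

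The paper's route is different and more hands-on: it works directly with the polytopes $P=\operatorname{conv}(v_\rho)$ and $\widetilde P=\operatorname{conv}((-\operatorname{mult}_{D_\rho}K_{t_1})v_\rho)$, derives a uniform positive lower bound $\lambda=\lambda(t_1,t_2,\delta)$ on $-\operatorname{mult}_{D_\rho}K_{t_1}$ from the $\delta$-lc inequality on $K_{t_2}$, uses strict convexity of $\phi_{K_{t_1}}$ (ampleness) to trap $\lambda P$ inside the simplices $P^\sigma$, and then uses the $(1-t_2)\delta$-lc bound on $K_{t_2}$ to show $\lambda(1-t_1)(1-t_2)\delta P$ contains no nonzero lattice point; Borisov's finiteness for such polytopes then gives finitely many $\Sigma(1)$ and hence finitely many $\Sigma$. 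This avoids having to produce an ample anti-log-canonical divisor for a uniformly $\varepsilon$-lc pair. If you want to salvage your BAB-style route, the cleanest way is likely to reproduce exactly this polytope argument rather than appeal to BAB as a black box.
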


We then study the dicritical and singular loci for Fano foliations. 
\begin{proposition}[{Proposition~\ref{prop:loci_connected}}]
     Let $\mc F_W$ be a Fano toric foliation on a complete $\mb Q$-factorial toric variety $X_\Sigma$ of a fan $\Sigma$ in $N_\mb R$. 
     \begin{enumerate}
        \item $\on{dicrit}(\cF_W)$ is locally closed and any two irreducible components of $\overline{\on{dicrit}(\cF_W)}$ have non-empty intersection contained in $\on{dicrit}(\cF_W)$. 
        In particular, $\on{dicrit}(\cF_W)$ is connected. 
        \item  Any two irreducible components of $\on{Sing}(\cF_W)$ intersect. In particular, $\on{Sing}(\mc F_W)$ is connected.
     \end{enumerate}  
\end{proposition}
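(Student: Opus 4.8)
The plan is to reduce both assertions to one combinatorial consequence of the Fano hypothesis. For $\rho\in\Sigma(1)$ let $u_\rho\in N$ be the primitive generator, and set $S=\{\rho\in\Sigma(1):u_\rho\in W\}$; recall that $D_\rho$ is $\cF_W$-invariant precisely when $\rho\notin S$ and that $-K_{\cF_W}=\sum_{\rho\in S}D_\rho=:A$, so $X_\Sigma$ is projective. I would first record the combinatorial descriptions of the two loci: both $\on{Sing}(\cF_W)$ and $\on{dicrit}(\cF_W)$ are unions of torus orbits, governed by how $W$ meets the cones of $\Sigma$; $\on{Sing}(\cF_W)$ is closed, while $\on{dicrit}(\cF_W)$ is a union of orbits over a collection of cones that is interval-closed in the face poset, hence locally closed. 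In either case the irreducible components of the locus (resp.\ of its closure) are the orbit closures $V(\sigma)$ for the cones $\sigma$ that are \emph{minimal} with the relevant property.

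The crucial step, and the only place ampleness is used, is that \emph{the rays not in $S$ are exactly the rays of a single cone $\sigma_0$ of $\Sigma$}. Let $\varphi$ be the $\Sigma$-piecewise-linear function with $\varphi(u_\rho)=1$ for $\rho\in S$, $\varphi(u_\rho)=0$ otherwise, linear on each cone. Since the coefficients of $A$ are nonnegative and every cone of $\Sigma$ is simplicial ($X_\Sigma$ is $\bQ$-factorial), $\varphi\geq 0$ on $N_{\bR}$, and $\{\varphi=0\}$ is the support of the subfan of cones having no ray in $S$. Ampleness of $A$ is equivalent to strict convexity of $\varphi$ with respect to $\Sigma$; hence $\{\varphi=0\}$ is the cone of the normal fan $\Sigma$ corresponding to the face of the moment polytope $P_A$ whose relative interior contains the origin (note $0\in P_A$), so it is a single cone $\sigma_0\in\Sigma$, and $\sigma_0(1)=\{\rho:\varphi(u_\rho)=0\}=\Sigma(1)\setminus S$.

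Next I would show that every minimal cone $\sigma$ producing a component of $\on{Sing}(\cF_W)$ or of $\overline{\on{dicrit}(\cF_W)}$ satisfies $\sigma(1)\subseteq\Sigma(1)\setminus S=\sigma_0(1)$, hence — $\sigma_0$ being simplicial — is a face of $\sigma_0$. Indeed, if such a minimal $\sigma$ had a ray $\rho_0\in S$ one could delete it: subtracting a suitable positive multiple of $u_{\rho_0}\in W$ from a witnessing vector $w\in W\cap\on{relint}(\sigma)$ keeps the result in $W$, and the compatible decomposition $W\cap\on{span}(\sigma)=\bR u_{\rho_0}\oplus\bigl(W\cap\on{span}(\sigma')\bigr)$ for the facet $\sigma'=\on{Cone}(\sigma(1)\setminus\{\rho_0\})$ shows $\sigma'$ still has the defining property, contradicting minimality. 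Granting this, each component $V(\sigma)$ is the orbit closure of a face of $\sigma_0$, hence contains the nonempty orbit closure $V(\sigma_0)$; in particular any two components meet, so $\on{Sing}(\cF_W)$, $\overline{\on{dicrit}(\cF_W)}$, and therefore $\on{dicrit}(\cF_W)$, are connected. For two components $V(\sigma_1),V(\sigma_2)$ of $\overline{\on{dicrit}(\cF_W)}$, their intersection is $V(\sigma_1+\sigma_2)$ with $\sigma_1+\sigma_2$ again a face of $\sigma_0$, and one checks from the combinatorial description — following it along the faces of $\sigma_0$ and using that any additional ray of a larger cone lies in $S$ or in $\sigma_0(1)$ — that this intersection lies in $\on{dicrit}(\cF_W)$.

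I expect the main obstacle to be bookkeeping rather than idea: making the combinatorial criteria for $\on{Sing}(\cF_W)$ and $\on{dicrit}(\cF_W)$ precise and verifying the mild monotonicity they need — the ``delete a ray of $S$'' reduction, the interval-closedness behind local closedness, and the last containment. The conceptual heart is the strict-convexity argument producing the single cone $\sigma_0$; once every relevant component is identified as the orbit closure of a face of $\sigma_0$, all the connectedness statements are immediate.
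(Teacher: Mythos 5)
Your approach is essentially the same as the paper's. The paper's Lemma~\ref{lem:Fano_supp_fcn_zero_locus} establishes (by a short Zorn's-lemma argument on the fan) exactly your key fact that the zero locus of the support function of $-K_{\cF_W}$ is a single cone $\tau_0\in\Sigma$, and the paper then runs the same ``delete a ray contained in $W$'' reduction to show that minimal dicritical (resp.\ singular) cones have no ray in $W$, hence are faces of $\tau_0$; your derivation of $\sigma_0=\tau_0$ via the moment polytope and normal fan is just a different packaging of the same convexity fact. Two small points to make precise: in the deletion step for the dicritical case the coefficient $a_{\rho_0}$ of $u_{\rho_0}$ in a lattice witness $w\in\on{Relint}(\sigma)\cap W\cap N$ is only rational, so one must first replace $w$ by a positive integer multiple clearing that denominator before subtracting $a_{\rho_0}u_{\rho_0}$ in order to stay in $N$; and your closing sentence about the pairwise intersections lying in $\on{dicrit}(\cF_W)$ waves at what the paper verifies directly, namely that $\on{Cone}(\tau_1,\tau_2)$ (a face of $\tau_0$ since $\tau_0$ is simplicial) is itself dicritical.
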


Last but not least, we investigate the interpolated $\delta$-lct, which is motivated by M\textsuperscript{c}Kernan's interpolated log canonical thresholds. 
In particular, we show the following theorems:
\begin{theorem}[{Theorem~\ref{thm:upper_lct_dense}}]
    Fix two positive integers $n$, $r$ with $r<n$ and a positive real number $\delta\leq\frac{1}{2}$. 
    Let $m=\lfloor\frac{1}{\delta}\rfloor$. 
    Then 
    $\dlctu_{n,r}^{\on{tor}}\cap [1-m\delta,1-\frac{m}{2m-1}\delta]$ is dense in $[1-m\delta,1-\frac{m}{2m-1}\delta]$. 
    In particular, $\dlctu_{n,r}^{\on{tor}}$ satisfies neither the ascending chain condition nor the descending chain condition. 
\end{theorem}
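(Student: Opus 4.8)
My plan is to reduce the assertion to an explicit combinatorial extremal problem for toric data, produce a one‑parameter family of rank‑one toric foliations on toric surfaces whose interpolated upper $\delta$‑lct sweeps out a dense subset of $[1-m\delta,\,1-\tfrac{m}{2m-1}\delta]$, and then promote this to every pair $(n,r)$ with $r<n$. By the combinatorial characterization of $\delta$‑log canonical toric adjoint foliated structures obtained above, for a toric adjoint foliated structure $(X_\Sigma,\cF_W,\Delta)$ the foliated log discrepancy $a\bigl(D_u;X_\Sigma,\cF_W,\Delta,t\bigr)$ of the valuation attached to a primitive $u=\sum_i\lambda_iu_i$ in (the relative interior of) a cone $\sigma=\langle u_1,\dots,u_k\rangle$ is \emph{affine} in $t$, with a slope determined by the $\lambda_i$ and by whether $D_u$ is $\cF$‑invariant; hence $\dlctu(X_\Sigma,\cF_W,\Delta)$ is a finite minimum of affine‑fractional functions of the $\lambda_i$ and of the coefficients of $\Delta$. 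I will also use a product reduction: if $(Y,\Delta_Y)$ is a smooth projective $\delta$‑lc toric variety, then $(X\times Y,\ \cF_W\times\cT_Y,\ \Delta\boxplus\Delta_Y,\ t)$ is $\delta$‑lc precisely when $(X,\cF_W,\Delta,t)$ is, and $-K$ of the product is ample for $t<1$ exactly when that of the first factor is, so the interpolated $\delta$‑lct is unchanged while rank and dimension go up by $\dim Y$; padding similarly by a rank‑zero toric foliation on a projective space (whose adjoint foliated structure is $\delta$‑lc for every $t<1$) lets me reduce to the case $r=1$, $n=2$, i.e. rank‑one toric foliations on toric surfaces.

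For a rational $s$ with $\tfrac1m\le s\le\tfrac{2m-1}{m}$ I would build a complete $\bQ$‑factorial toric surface $X_s$ whose fan contains $-e_1$ as a ray and a two‑dimensional cone $\sigma_s=\langle u_1,u_2\rangle$ with $e_1$ in its interior, $e_1=\lambda_1u_1+\lambda_2u_2$ and $\lambda_1+\lambda_2=s$, the other maximal cones being taken as mild (smooth, or canonical) as the target interval allows; equip it with $\cF_{\bR e_1}$ and, say, $\Delta=0$. Since $e_1$ is the only primitive lattice vector of $W=\bR e_1$ interior to a cone, $D_{e_1}$ is the unique $\cF$‑non‑invariant exceptional valuation, and the criterion above gives $a(D_{e_1};\cdot)=s(1-t)$; thus the inequality $a(D_{e_1};\cdot)\ge\delta$ produces the candidate $\dlctu(X_s,\cF_{\bR e_1},0)=1-\delta/s$.

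It remains to check $\delta$‑log canonicity at $t=1-\delta/s$ and that $1-\delta/s$ is really the threshold. Every other toric valuation $D_u$ is $\cF$‑invariant: those with $\sum_i\lambda_i>1$ give only constraints $t\le(\text{a quantity}>1)$, hence are vacuous; those with $\sum_i\lambda_i<1$ give lower bounds on $t$, and a direct estimate shows the strongest of these stays below $1-\delta/s$ exactly because $X_s$ is mild and $s\le\tfrac{2m-1}{m}$, while $1-\delta/s\ge1-m\delta\ge0$ uses $s\ge\tfrac1m$ and $m=\lfloor1/\delta\rfloor$. So the $\delta$‑lc locus in $t$ is a non‑empty interval with upper endpoint $1-\delta/s$, i.e. $\dlctu(X_s,\cF_{\bR e_1},0)=1-\delta/s$; one also verifies $-K_{(X_s,\cF_{\bR e_1},0,t)}$ is ample for $t<1-\delta/s$, so the example is admissible. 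As $s$ ranges over a dense subset of $[\tfrac1m,\tfrac{2m-1}{m}]$ and $s\mapsto1-\delta/s$ is a continuous increasing bijection onto $[1-m\delta,\,1-\tfrac{m}{2m-1}\delta]$, the resulting values are dense there; pushing the examples to dimension $n$ and rank $r$ as above gives the density statement. Lastly, $\delta\le\tfrac12$ forces $m\ge2$, so $\tfrac1{2m-1}<1$ and $1-m\delta<1-\tfrac{m}{2m-1}\delta$: the interval is non‑degenerate, and a dense subset of a non‑degenerate interval contains strictly increasing and strictly decreasing infinite sequences, so $\dlctu_{n,r}^{\on{tor}}$ satisfies neither the ascending nor the descending chain condition.

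The crux — and the step I expect to be hardest — is the verification above together with the fine design of the family: one must arrange the surfaces $X_s$ so that the attainable values of $s$ are genuinely dense in $[\tfrac1m,\tfrac{2m-1}{m}]$ while at the same time bounding \emph{every} $\cF$‑invariant exceptional valuation over \emph{every} cone, not only over $\sigma_s$ but over its neighbours, which are forced to acquire nontrivial quotient singularities when $s$ is close to $\tfrac1m$. Pinning down the precise endpoint $\tfrac{2m-1}{m}$ (rather than a larger one) is exactly the content of this balance between the barycentric sum $\lambda_1+\lambda_2$ of the dicritical ray and the minimal log discrepancies of the ambient singular cone, and showing that no auxiliary valuation becomes binding at $t=1-\delta/s$ is where essentially all of the real work lies.
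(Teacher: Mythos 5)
Your core idea --- a one‑parameter family of rank‑one toric foliations in which a single distinguished valuation $D_w$ with $w\in W$ having coordinate sum $q$ yields $\dlctu=1-\delta/q$, followed by a density argument in $q$ --- is the same as the paper's. But there is a genuine gap, and it is precisely the one you flag at the end: you neither construct the surfaces $X_s$ nor verify that at $t=1-\delta/s$ no other exceptional valuation obstructs, and this is not a routine check. With $\Delta=0$ and no ray of $\sigma_s$ in $W$, the constraint from a primitive $v=\sum\lambda_iu_i\in\sigma_s\cap N$ with $v\notin W$ reduces to $\sum_i\lambda_i\geq\delta$ for every $t<1$. If any such $v$ violates this, the set of $t$ for which $(X_s,\cF_{\bR e_1},t)$ is $\delta$-lc is \emph{empty}, so $\dlctu(X_s,\cF)$ is not merely smaller than $1-\delta/s$ --- it is undefined. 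Since $s=\lambda_1+\lambda_2$ is meant to sweep out non-integral values, the cone $\sigma_s$ cannot be smooth, so there really are extra primitive lattice points in $\sigma_s$ whose coordinate sums must be bounded below by $\delta$, and nothing in the proposal guarantees this while simultaneously making the attainable $s$ dense in $[\frac1m,\frac{2m-1}{m}]$.

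The paper avoids the entire issue by not using a complete variety at all. It works on the affine $X=U_\sigma$ with $\sigma=\on{Cone}(e_1,\ldots,e_n)$, but over the refined lattice $N=\bZ^n+\bZ u$ with $u=(1-\frac{m}{s})e_1+\frac1s e_2$ and $\gcd(m,s)=1$. The nonzero points of $N\cap[0,1)^n$ are exactly $v_k=(\lceil\frac{km}{s}\rceil-\frac{km}{s})e_1+\frac{k}{s}e_2$ for $0<k<s$, and every one of them has coordinate sum $f(k)\in(\frac1m,\frac{2m-1}{m})$; since $\frac1m\geq\delta$, the transverse constraints $\sum\lambda_i\geq\delta$ are all automatic. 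Choosing an $r$-plane $W$ with $W\cap N=\bZ v_{\tilde k}$ makes $v_{\tilde k}$ the unique binding lattice point, gives $\dlctu=1-\delta/f(\tilde k)$ directly in dimension $n$ and rank $r$ (so your product padding is unnecessary), and the equidistribution of $f(\tilde k)$ as $s\to\infty$ yields density. Finally, since $\dlctu_{n,r}^{\on{tor}}$ is defined over all $\bQ$-factorial toric pairs $(X,\cF)$ with no Fano hypothesis, your step ``one also verifies $-K_{(X_s,\cF_{\bR e_1},0,t)}$ is ample'' is a red herring and should be dropped.
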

\begin{theorem}[{Theorem~\ref{thm:lower_delta_lct_dcc}}]
    $\dlctl_{n,r}^{\on{tor}}$ satisfies the descending chain condition.
\end{theorem}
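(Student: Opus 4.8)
The plan is to translate the statement into toric combinatorics, isolate the ``active'' valuation that computes a lower interpolated $\delta$-lct, bound its complexity, and then run an induction on $(n,r)$ via toric foliated adjunction, in the spirit of the Hacon--McKernan--Xu proof of the ACC for log canonical thresholds.

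First I would set up the toric dictionary. For a torus-invariant adjoint foliated structure $(X_\Sigma,\cF_W,\Delta,t)$ and a toric valuation $v\in|\Sigma|\cap N$ lying in the relative interior of $\sigma=\langle v_{\rho_1},\dots,v_{\rho_k}\rangle$, the $\epsilon$-adjusted log discrepancy is an affine function $\ell_v(t)=t\,a^+_\cF(v)+(1-t)\,a^+_X(v)$, where $a^+_X(v)=\sum_i\lambda_i(1-b_{\rho_i})$ with $v=\sum_i\lambda_iv_{\rho_i}$ and $b_{\rho_i}=\operatorname{coeff}_{D_{\rho_i}}\Delta$, and $a^+_\cF(v)$ is the explicit piecewise-linear function of $v$ recorded in the toric description of foliation singularities (depending on whether $v\in W$ or $v\notin W$). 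The structure is $\delta$-lc iff $\ell_v(t)\ge\delta$ for all such $v$, so $\{t:(X,\cF,\Delta,t)\text{ is }\delta\text{-lc}\}$ is a closed subinterval of $[0,1)$ and its left endpoint is $\underline t=\max_v\frac{\delta-a^+_X(v)}{a^+_\cF(v)-a^+_X(v)}$, the maximum over $v$ with $a^+_\cF(v)>a^+_X(v)$ (and $\underline t$ contributes nothing if it exceeds the corresponding right endpoint). Thus a lower interpolated $\delta$-lct is realized by an \emph{active} valuation $v_0$ with $\ell_{v_0}(\underline t)=\delta$ and $a^+_\cF(v_0)>a^+_X(v_0)$.

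The key preliminary step is to bound the complexity of $v_0$. After subdividing I may take $\sigma$ simplicial. Applying the $\delta$-lc inequality to the valuations $v_0-v_{\rho_i}$ and using the discrepancy formulas, one finds that $\lambda_i\ge 1$ would force $\ell_{v_0-v_{\rho_i}}(\underline t)=\delta-\bigl[(\text{positive})\bigr]<\delta$ (the positive quantity being $2\underline t+(1-\underline t)(1-b_{\rho_i})$ when $v_{\rho_i}\notin W$, and $(1-\underline t)(1-b_{\rho_i})$ when $v_{\rho_i}\in W$, both positive since $\underline t<1$ and the $\delta$-lc condition forces $b_{\rho_i}\le 1-\delta$); hence $0<\lambda_i<1$ for all $i$. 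So $v_0$ is one of the finitely many ``box points'' of $\sigma$, and, together with the boundedness of the $b_{\rho_i}$, the data $(\sigma,W,\Delta,v_0)$ entering the formula for $\underline t$ ranges in a relatively compact set. Taking the crepant star subdivision at $\rho_0=\bR_{\ge 0}v_0$ I may moreover assume there is a torus-invariant prime divisor $D_0=X_{\on{Star}(\rho_0)}$ carrying boundary coefficient $1-\delta$ and with $\ell_{v_0}$ its discrepancy.

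Then I would induct on $(n,r)$, base case $r=0$ (where $K_\cF=0$ and the statement reduces to the classical DCC for toric log canonical thresholds). Applying toric foliated adjunction to $D_0$ yields an adjoint foliated structure on the $(n-1)$-dimensional toric variety $D_0$ of foliation rank $r-1$, with the \emph{same} interpolation parameter, which is $\delta$-lc and whose boundary coefficients (the ``different'') lie in a DCC set depending only on $n$, $r$, $\delta$ --- here the toric hypothesis is essential, as the different is computed explicitly from the fan $\on{Star}(\rho_0)$ and the $\lambda_i<1$ bound keeps it in a bounded region. Rewriting $\ell_{v_0}(\underline t)=\delta$ in terms of the restricted structure expresses $\underline t$ as a fixed function of a lower interpolated $\delta$-lct of the $(n-1)$-dimensional structure and finitely many different-coefficients from a DCC set, and the inductive hypothesis gives the DCC. (Alternatively, and more in line with the rest of the paper, once $(\sigma,W,\Delta,v_0)$ is confined to a compact set one can complete $\sigma$ to a Fano fan and perturb $\Delta$ to apply Theorem~\ref{thm:bounded_adjoint_Fano}, deducing that only finitely many local models occur.) The main obstacle I expect is the adjunction step: verifying that foliated adjunction is compatible with the interpolation parameter $t$ so that the relevant discrepancies (and hence $\delta$-lc-ness and the value $\underline t$) restrict correctly along $D_0$, and that the different coefficients genuinely form a DCC set; a secondary point is to treat the transverse active valuations ($v_0\notin W$) and check they cannot produce accumulation from above, which the combinatorial formula for $\underline t$ should settle directly.
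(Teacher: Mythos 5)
Your first step---reducing to a simplicial cone and showing the active valuation $v_0$ is a box point with coordinates in $(0,1)^s$---matches the paper's reduction (the claim $x\in(0,1)^s\cap\bQ^s$ inside the proof of the ``$\subseteq$'' direction of the equality $\dlctl_{n,r}^{\on{tor}}=\{0\}\cup\bigcup\delta\text{-}L_{s,\ell}$), though your case analysis glosses over the possibility that $v_0-v_{\rho_k}\notin W$, which in the paper does not give a contradiction but instead forces $\ell=0$ and $L=1$. After that, your route diverges from the paper's in an essential way: the paper parametrizes the possible lower $\delta$-lcts by rational box points $x$, associates to each $x^{(m)}$ the sublattice $N_m\subseteq\bQ^s$ generated by $\bZ^s$ and $x^{(m)}$, and deduces DCC from the Ambro--Lawrence theorem that finite unions of closed subgroups of the real torus $\bR^s/\bZ^s$ satisfy DCC. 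That mechanism is what actually produces the chain condition, and you replace it with a Hacon--McKernan--Xu style induction via adjunction.

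The adjunction route, as written, has gaps that are not secondary. First, restricting $\cF_W$ to $D_0=D_{\rho_0}$ only drops the foliation rank by one when $\rho_0\subseteq W$, i.e.\ when $v_0\in W$; when the active valuation is transverse ($v_0\notin W$), the projection $W\to N_\bC/\bC\rho_0$ is injective, the restricted foliation still has rank $r$, and your induction on $(n,r)$ with base case $r=0$ does not close. Second, and more seriously, you assert that the different coefficients on $D_0$ ``lie in a DCC set'' because the $\lambda_i<1$ bound ``keeps it in a bounded region,'' but boundedness is strictly weaker than DCC: the coefficients of the different depend on the lattice $N$ and fan $\on{Star}(\rho_0)$, which range over infinitely many non-isomorphic models, and establishing DCC of these differents is exactly as hard as the theorem you are trying to prove. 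In HMX this circularity is broken by a carefully structured simultaneous induction between ACC for lcts and DCC for differents; you have not set up an analogue, and it is not clear the foliated-interpolated version of such an induction is available. Your fallback via Theorem~\ref{thm:bounded_adjoint_Fano} has a related problem: that theorem bounds the pairs $(X,\cF)$ up to isomorphism, but the quantity $\dlctl$ is sensitive to the lattice $N$ (the paper's $\delta$-lct is computed from the lattice $N_m$ generated by $\bZ^s$ and the box point), so finitely many ambient varieties does not directly yield finitely many or DCC values of $\dlctl$; one still needs the torus-subgroup argument or an equivalent to control the lattice data. In short, the box-point normalization is right, but the argument is missing the key finiteness/DCC input that the paper supplies via Theorem~\ref{thm_subgroups_in_real_torus}, and without it the proposed induction does not go through.
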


\section*{Acknowledgments}
The authors would like to express their gratitude to Paolo Cascini, Ching-Jui Lai, and Calum Spicer for helpful discussions. 
The authors were partially supported by Lab. Bir. Geom. Grant number 111-2123-M-002-012-.

\section{Preliminaries}
We work over an algebraically closed field of characteristic zero. 

\subsection{Foliations}
In this subsection, most of the definitions follow from \cite{CS} and \cite{druel2021foliation}. 
Let $X$ be a normal variety of dimension $n$. 
A \emph{foliation} is a coherent subsheaf $\cF$ of the tangent sheaf $\cT_X$ such that
\begin{enumerate}
\item $\cF$ is saturated, that is $\cT_X/\cF$ is torsion-free, and
\item $\cF$ is closed under the Lie bracket.
\end{enumerate}

Let $r=\on{rank}(\cF)$ be the \emph{rank} of the foliation and $c=n-r$ be the \emph{corank} of the foliation. 
The \emph{canonical divisor} $K_\cF$ is a Weil divisor on $X$ such that $\cO_X(-K_\cF)\cong\det\cF$. 



Let $\pi\colon Y \dashrightarrow X$ be a dominant rational map between normal varieties and $\cF$ be a foliation on $X$. 
We denote by $\pi^{-1}\cF$ the \emph{pullback foliation} on $Y$ (see, for example, \cite[Section 3.2]{druel2021foliation}). 
If $f\colon X\dashrightarrow X'$ is birational, then $f_*\mathcal{F}$ represents the transformed foliation on $X'$ induced by $f^{-1}$.

Let $X^\circ$ be the open subset of $X$ such that $\cF\vert_{X^\circ}$ is a subbundle of $T_{X^\circ}$. 
A \emph{leaf} $L$ is a maximal connected and immersed holomorphic submanifold $L\subseteq X^\circ$ such that $T_L = \cF\vert_L$. 

A foliation $\cF$ is called \emph{algebraically integrable} if its leaves are algebraic. 
Equivalently, an algebraically integrable foliation $\cF$ on $X$ is induced from a dominant rational map $f\colon X\dashrightarrow Y$ for some normal variety $Y$ (see, for example, \cite[Sections 3.2 and 3.6]{druel2021foliation}).

\begin{definition}[Singular locus]
Let $\cF$ be a foliation of rank $r$ on a normal variety $X$. 
We obtain a morphism $\phi\colon (\Omega_X^r)^{**} \to \cO_X(K_\cF)$ by taking the double dual of the $r$-th wedge product of $\Omega_X^{**}\to \cF^*$, which is induced by the inclusion $\cF\subseteq \cT_X$. 
We define the \emph{singular locus} of $\cF$, denoted by $\on{Sing}(\cF)$, as the co-support of the image of $\phi'\colon (\Omega_X^r\otimes\cO_X(-K_\cF))^{**} \to \cO_X$. 
\end{definition}

\begin{definition}[Invariance]\label{defn_invariance}
Let $\cF$ be a foliation of rank $r$ on a normal variety $X$. 
\begin{enumerate}
    \item We say that a subvariety $S\subseteq X$ is $\cF$-\emph{invariant} if for any subset $U\subseteq X$ and any section $\partial\in H^0(U,\cF)$, we have 
    $\partial(\cI_{S\cap U})\subseteq\cI_{S\cap U}$ where $\cI_{S\cap U}$ is the ideal sheaf of $S\cap U$. 
    \item For any prime divisor $D\subseteq Y \xrightarrow{\pi} X$ over $X$ where $\pi$ is a birational morphism, we define $\iota(D)=0$ if $D$ is $\pi^{-1}\cF$-invariant and $\iota(D)=1$ if $D$ is non-$\pi^{-1}\mc F$-invariant. 
    Note that $\iota(D)$ is independent of the choice of the birational morphism $\pi$ that extracts $D$.
\end{enumerate}
\end{definition}

\begin{definition}[{Dicriticality, \cite[Definition 4.6]{CC}}]
    Let $\cF$ be foliation of rank $r$ on a normal variety $X$ of dimension $n$. 
    We say $\cF$ is \emph{dicritical} if there is an exceptional divisor $E$ over $X$ which is not foliation invariant and whose center on $X$ has dimension at most $n-r-1$. 

    We say $\cF$ is \emph{non-dicritical} if it is not dicritical. 
\end{definition}

\subsection{Toric varieties}
In this paper, every toric variety is assumed to be normal. Our notations closely follow \cite{CLS11}. 
We recall some notations that are frequently used in this paper. 

Let $N\simeq \bZ^n$ be a lattice of rank $n$ and $M:=\on{Hom}(N,\mb Z)$ be its dual lattice. We write $N_\mb R:=N\otimes\mb R$ and $N_\mb C:=N\otimes\mb C$.
A \emph{fan} $\Sigma$ in $N_\bR$ is a finite collection of rational, strongly convex, polyhedral cones $\sigma\subseteq N_\bR$, such that each face $\tau$ of a cone $\sigma\in\Sigma$ belongs to $\Sigma$ and the intersection of any two cones in $\Sigma$ is a face of each. 
The \emph{support} of $\Sigma$ is defined as $|\Sigma|=\bigcup_{\tau\in\Sigma}\tau$. 
For any $k\in\bZ_{\geq 0}$, denote the set of all $k$-dimensional cones in $\Sigma$ by $\Sigma(k)$, and denote the set of all $k$-dimensional faces of $\sigma\in\Sigma$ by $\sigma(k)$.
We write $\tau\preceq\sigma$ when $\tau$ is a face of $\sigma$. 

For each cone $\sigma\in\Sigma$, the affine toric variety associated with $\sigma$ is  $U_{\sigma,N}=\on{Spec}\bC[\sigma^\vee\cap M]=\on{Spec}\bC[\chi^m\mid m\in\sigma^\vee\cap M]$ where $\sigma^\vee$ is the dual cone of $\sigma$. 
A cone $\sigma\in\Sigma$ is said to be \emph{smooth with respect to $N$} if the primitive generators of the rays in $\sigma(1)$ form part of a $\bZ$-basis for $N$ (or equivalently, $U_{\sigma,N}$ is smooth).
If $\tau\preceq \sigma$ are two cones in $\Sigma$, there is an open immersion $U_{\tau,N}\hookrightarrow U_{\sigma, N}$.
The toric variety $X_{\Sigma,N}$ of the fan $\Sigma$ is constructed by gluing all $U_{\sigma,N}$ together via $\Sigma$. 
The dense torus $U_{\{0\},N}=\on{Spec}\bC[M]\subseteq X_{\Sigma,N}$ is denoted by $T_N$. 
The action of $T_N$ on itself can be extended to an action on $X_{\Sigma,N}$. 
We will omit $N$ in the subscript when $N$ is clear. 

For each $\sigma\in\Sigma$, $\on{Relint}(\sigma)$ denotes the relative interior of $\sigma$, $O_\sigma$ denotes the $T$-orbit of the distinguished point $x_\sigma$, and $V_\sigma$ denotes the closure of $O_\sigma$ in $X_\Sigma$ (see \cite[Chapter 3]{CLS11} for further details). 
If $\rho\in\Sigma(1)$ is a ray, then $V_\rho$ is a divisor and will also be denoted by $D_\rho$.

\begin{lemmadef}
    Let $D=\sum_{\rho\in\Sigma(1)} a_\rho D_\rho$ be a $\bR$-Cartier torus-invariant divisor on $X_\Sigma$ where $a_\rho\in\bR$ for all $\rho\in\Sigma(1)$. 
    Then there exists a function $\phi_D\colon |\Sigma| \to \bR$, called the \emph{support function} of $D$, which is linear on each cone $\tau\in\Sigma$ and $\phi_D(v_\rho) = -a_\rho$ for all $\rho\in\Sigma(1)$ where $v_\rho$ is the primitive element of $\rho$. 
\end{lemmadef}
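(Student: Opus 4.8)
The plan is to reduce to the classical dictionary between torus-invariant Cartier divisors and integral piecewise-linear support functions (\cite[Section 4.2]{CLS11}) and to extend it by $\bR$-linearity. The only input needed beyond that dictionary is its affine-local, $\bR$-coefficient version: on the affine toric variety $U_\sigma$, a torus-invariant $\bR$-Cartier divisor is of the form $\on{div}(\chi^{-m})\vert_{U_\sigma}$ for some $m\in M_\bR:=M\otimes\bR$, unique modulo $\sigma^\perp$. This is deduced from the $\bZ$-coefficient statement by extension of scalars, using that the linear conditions $\langle m,v_\rho\rangle=-a_\rho$ ($\rho\in\sigma(1)$) cutting out the torus-invariant $\bR$-Cartier divisors have rational coefficients.

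First I would construct the local data. Since $D$ is $\bR$-Cartier, $D\vert_{U_\sigma}$ is $\bR$-Cartier and torus-invariant for every $\sigma\in\Sigma$, so by the above there is $m_\sigma\in M_\bR$ with $D\vert_{U_\sigma}=\on{div}(\chi^{-m_\sigma})\vert_{U_\sigma}$. Comparing the coefficient of $D_\rho$ on both sides and using $\on{div}(\chi^m)=\sum_{\rho}\langle m,v_\rho\rangle D_\rho$, this forces $\langle m_\sigma,v_\rho\rangle=-a_\rho$ for every $\rho\in\sigma(1)$.

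Next I would glue. If $\tau\preceq\sigma$, then on $U_\tau\subseteq U_\sigma$ the two local equations $\chi^{-m_\sigma}$ and $\chi^{-m_\tau}$ of $D$ differ by a unit of $\bC[\tau^\vee\cap M]$, whence $\langle m_\sigma-m_\tau,\cdot\rangle$ vanishes on $\tau$. Since any two cones of $\Sigma$ intersect in a common face, the functions $v\mapsto\langle m_\sigma,v\rangle$ on $\sigma$ agree on overlaps and patch to a single $\phi_D\colon|\Sigma|\to\bR$. By construction $\phi_D\vert_\tau=\langle m_\sigma,\cdot\rangle\vert_\tau$ for any $\sigma\supseteq\tau$, so $\phi_D$ is linear on each cone, and $\phi_D(v_\rho)=\langle m_\rho,v_\rho\rangle=-a_\rho$, as desired. (Equivalently, one may write $D=\sum_j\lambda_jE_j$ with $\lambda_j\in\bR$ and $E_j$ torus-invariant Cartier, and set $\phi_D=\sum_j\lambda_j\phi_{E_j}$.)

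I do not expect a genuine obstacle. The only step where the hypothesis is used in an essential way is the existence of the local $m_\sigma$: for a non-simplicial cone $\sigma$ the primitive ray generators are linearly over-determined, so a linear functional taking the prescribed values $-a_\rho$ on all of them exists exactly when $D\vert_{U_\sigma}$ is $\bR$-Cartier; and the compatibility of the $m_\sigma$ along common faces is precisely what promotes ``linear on each cone'' to ``a well-defined function on $|\Sigma|$''. Everything else is bookkeeping.
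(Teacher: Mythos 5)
Your proof is correct, and your primary route is genuinely different from the paper's. The paper reduces directly to the Cartier case of \cite[Theorem 4.2.12]{CLS11}: it writes $D=\sum_i d_iD_i$ with $d_i\in\bR$ and $D_i$ torus-invariant Cartier, and sets $\phi_D:=\sum_i d_i\phi_{D_i}$, leaving linearity on cones and the values $\phi_D(v_\rho)=-a_\rho$ as immediate. Your main argument instead re-runs the proof of the Cartier-case theorem with $\bR$-coefficients: construct $m_\sigma\in M_\bR$ locally, check $\langle m_\sigma,v_\rho\rangle=-a_\rho$, and glue across faces. This is more self-contained and makes explicit, via the rationality of the linear conditions cutting out the Cartier locus, exactly why the $\bR$-coefficient local generator $m_\sigma$ exists --- a point that the paper's proof (in choosing torus-invariant Cartier $D_i$'s) tacitly assumes without comment. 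Your parenthetical remark at the end is, of course, exactly the paper's argument, so you have in effect given both. The trade-off is that the paper's route is shorter given the Cartier black box, whereas yours requires no decomposition of $D$ and exposes where $\bR$-Cartierness is used.
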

\begin{proof}
    When $D$ is Cartier, it is \cite[Theorem 4.2.12]{CLS11}. 
    In general, we can write $D = \sum_{i=1}^k d_iD_i$ where $D_i$ are Cartier torus-invariant divisors and $d_i\in \bR$. 
    Let $\phi_i\colon |\Sigma|\to \bR$ be the support function associated with $D_i$. 
    Then we define $\phi_D\colon |\Sigma|\to \bR$ as $\sum_{i=1}^k\phi_i$. 
    It is clear that $\phi_D$ is linear on each cone $\tau\in\Sigma$ and $\phi_D(v_\rho) = -a_\rho$ for all $\rho\in\Sigma(1)$. 
    
    Note that such function $\phi_D$ is unique if it exists as it is linear on each cone $\tau\in\Sigma$. 
\end{proof}

\begin{theorem}
    Let $D$ be a torus-invariant $\bR$-Cartier divisor on a $\bQ$-factorial toric variety $X_\Sigma$. 
    The following statements are equivalent:
    \begin{enumerate}
        \item $D$ is ample.
        \item $\phi_D$ is strictly convex with respect to $\Sigma$, that is, $\phi_D(u+v) > \phi_D(u)+\phi_D(v)$ for all $u$, $v$ not in the same cone of $\Sigma$. 
        \item $\phi_D(u_{\rho_1}+\cdots+u_{\rho_k}) > \phi_D(u_{\rho_1})+\cdots+\phi_D(u_{\rho_k})$ for all primitive collections $P=\{\rho_1,\ldots,\rho_k\}$ of $\Sigma$. We recall that $P$ is a primitive collection if $P$ is not contained in $\sigma(1)$ for all $\sigma\in\Sigma$ but any proper subset is contained in $\sigma(1)$ for some $\sigma\in\Sigma$. 
    \end{enumerate}
\end{theorem}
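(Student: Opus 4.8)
The plan is to prove the cycle of implications $(1)\Rightarrow(2)\Rightarrow(3)\Rightarrow(1)$, working throughout (as the phrasing of~(2) requires) in the complete case $|\Sigma|=N_\bR$ and using that $\bQ$-factoriality forces $\Sigma$ to be simplicial. The equivalence $(1)\Leftrightarrow(2)$ is the classical toric Nakai criterion, so the two steps carrying real content are $(2)\Rightarrow(3)$, a short argument with the combinatorics of a simplicial fan, and $(3)\Rightarrow(1)$, which I expect to be the main obstacle.

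For $(1)\Leftrightarrow(2)$: when $D$ is Cartier this is \cite[Theorem~6.1.14]{CLS11}, and since ampleness and the inequalities in~(2) are unchanged under multiplying $D$ by a positive rational number, the equivalence passes to $\bQ$-Cartier $D$. For a general $\bR$-Cartier $D$ one observes that the defining inequalities of~(2) depend only on the numerical class of $D$ and form a finite system of strict linear inequalities, one across each wall $\tau\in\Sigma(n-1)$ (by the standard reduction of convexity of a $\Sigma$-linear function to adjacent cones), hence cut out an open cone; while ``$D$ ample'' means the class of $D$ lies in the interior of the rational polyhedral cone $\operatorname{Nef}(X_\Sigma)$, also an open cone. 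These two open cones contain the same $\bQ$-Cartier classes, so by density of $\bQ$-Cartier classes they coincide.

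For $(2)\Rightarrow(3)$: condition~(2) is equivalent to saying that $\phi_D$ is superadditive on $N_\bR$ with $\phi_D(u+v)=\phi_D(u)+\phi_D(v)$ holding only when $u,v$ lie in a common cone of $\Sigma$. Let $P=\{\rho_1,\dots,\rho_k\}$ be a primitive collection, put $Q=P\setminus\{\rho_k\}$ and $w=u_{\rho_1}+\dots+u_{\rho_{k-1}}$. As $Q$ is a proper subset of $P$, it lies in $\sigma(1)$ for some $\sigma\in\Sigma$, so, $\Sigma$ being simplicial, $\sigma_Q:=\operatorname{Cone}(Q)$ is a cone of $\Sigma$ with $\sigma_Q(1)=Q$; hence $w\in\operatorname{Relint}(\sigma_Q)$ and $\phi_D(w)=\sum_{i<k}\phi_D(u_{\rho_i})$ by linearity of $\phi_D$ on $\sigma_Q$. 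If we had $\phi_D(w+u_{\rho_k})=\phi_D(w)+\phi_D(u_{\rho_k})$, then $w$ and $u_{\rho_k}$ would lie in a common cone $\tau\in\Sigma$; since $\operatorname{Relint}(\sigma_Q)$ meets $\tau$ at $w$, this forces $\sigma_Q\preceq\tau$, hence $Q\subseteq\tau(1)$, and also $\rho_k\in\tau(1)$ because $u_{\rho_k}\in\tau$ and $\rho_k\in\Sigma$, so $P\subseteq\tau(1)$, contradicting primitivity of $P$. Therefore $\phi_D(u_{\rho_1}+\dots+u_{\rho_k})>\phi_D(w)+\phi_D(u_{\rho_k})=\sum_i\phi_D(u_{\rho_i})$, which is~(3). (No induction is needed here, and $k\ge 2$ automatically.)

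For $(3)\Rightarrow(1)$, the main obstacle: identify $N_1(X_\Sigma)_\bR$ with the space of linear relations among the $u_\rho$, with the class of $D=\sum a_\rho D_\rho$ pairing with a relation $(b_\rho)$ by $\sum_\rho a_\rho b_\rho$. To a primitive collection $P$ one attaches its primitive relation: writing $\sum_{\rho\in P}u_\rho=\sum_{\rho\in\sigma(P)(1)}c_\rho u_\rho$, where $\sum_{\rho\in P}u_\rho$ lies in $\operatorname{Relint}(\sigma(P))$ and all $c_\rho>0$, yields a class $r(P)\in N_1(X_\Sigma)_\bR$, and using $\phi_D(u_\rho)=-a_\rho$ and linearity of $\phi_D$ on $\sigma(P)$ one computes $D\cdot r(P)=\phi_D\bigl(\sum_{\rho\in P}u_\rho\bigr)-\sum_{\rho\in P}\phi_D(u_\rho)$; thus~(3) is precisely the assertion $D\cdot r(P)>0$ for every primitive collection $P$. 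The crux is then to invoke that on a complete simplicial toric variety every extremal ray of the Mori cone $\overline{NE}(X_\Sigma)$ is generated by the class of some primitive relation (Batyrev's ampleness criterion; cf.\ \cite[\S6.4]{CLS11}), which upgrades positivity of $D$ on all $r(P)$ to positivity on $\overline{NE}(X_\Sigma)\setminus\{0\}$, i.e.\ to ampleness of $D$ by the toric Kleiman criterion. The delicate point is exactly this input: the pairwise wall inequalities alone would not suffice, and one needs the structural fact that the primitive relations already account for all extremal contractions of $X_\Sigma$ — something one either cites or reproves via the usual toric Mori-theoretic wall-crossing analysis.
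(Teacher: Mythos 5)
Your proposal is correct and follows the same overall architecture as the paper: reduce to the Cartier case of \cite[Theorem 6.1.14]{CLS11}, prove $(2)\Rightarrow(3)$ by the simplicial primitive-collection argument (your proof of this step is essentially identical to the paper's, differing only in which ray you split off), and obtain $(3)\Rightarrow(1)$ from Batyrev's criterion, which the paper simply cites as \cite[Theorem 6.4.11 and Proposition 6.4.1]{CLS11} and which you correctly identify as the one genuinely nontrivial input. The only place where your route differs in substance is the passage from Cartier to $\bR$-Cartier in $(1)\Rightarrow(2)$: the paper writes an ample $\bR$-divisor as a positive real combination $\sum d_iD_i$ of ample Cartier divisors and adds the known strict inequalities for each $\phi_{D_i}$, whereas you argue that both ``ample'' and ``condition (2)'' cut out open convex cones in $N^1(X_\Sigma)_\bR$ agreeing on rational classes, hence coincide by density. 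Both arguments ultimately rest on the ample cone being open and rational polyhedral; the paper's version is shorter, while yours has the mild advantage of giving $(1)\Leftrightarrow(2)$ in one stroke without routing the converse around the cycle. Your explicit identification of condition (3) with $D\cdot r(P)>0$ for the primitive relations $r(P)$ is a correct unpacking of the cited result rather than a new argument, and your caveat that the wall inequalities alone do not imply ampleness without the structural fact about extremal rays is well taken.
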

\begin{proof}
    When $D$ is Cartier, it is well-known. (See \cite[Theorem 6.1.14 and Theorem 6.4.9]{CLS11}). 
    
    \begin{itemize}
        \item (1) $\Rightarrow$ (2). 
        If $D$ is ample, then we can write $D=\sum d_iD_i$ as $\bR$-linear combination of ample Cartier divisor where $d_i\in\bR_{>0}$. 
        Then $\phi_D(u+v) = \sum d_i\phi_{D_i}(u+v)>\sum d_i\phi_{D_i}(u)+\sum d_i\phi_{D_i}(v) = \phi_D(u)+\phi_D(v)$. 
        This shows that (1) implies (2). 
        
        \item (2) $\Rightarrow$ (3). 
        Let $\{\rho_1,\ldots,\rho_k\}$ be a primitive collection of $\Sigma$. 
        Note that $v:=u_{\rho_2}+\cdots+u_{\rho_k}\in\on{Relint}(\on{Cone}(\rho_2,\ldots,\rho_k))$. 
        Suppose $u_{\rho_1}$ and $v$ are on the cone $\tau\in\Sigma$. 
        Then $\rho_1\preceq\tau$ and $\on{Cone}(\rho_2,\ldots,\rho_k)\preceq\tau$. 
        Thus $\rho_2,\ldots,\rho_k\preceq\tau$. 
        As $\tau$ is simplicial, we have $\on{Cone}(\rho_1,\ldots,\rho_k)\in\Sigma$, which contradicts to that $\{\rho_1,\ldots,\rho_k\}$ is a primitive collection of $\Sigma$. 
        Hence, $u_{\rho_1}$ and $v$ are on the different cone of $\Sigma$. 
        Therefore, 
        \[\phi_D(u_{\rho_1}+\cdots+u_{\rho_k}) > \phi_D(u_{\rho_1}) + \phi_D(u_{\rho_2}\cdots+u_{\rho_k}) = \phi_D(u_{\rho_1})+\cdots+\phi_D(u_{\rho_k})\] 
        where the inequality follows from the strict convexity of $\varphi_D$. 
        This shows that (2) implies (3). 

        \item (3) $\Rightarrow$ (1). Suppose (3), we have $D$ is ample by \cite[Theorem 6.4.11 and Proposition 6.4.1]{CLS11}. 
    \end{itemize}
\end{proof}

\subsection{Toric foliations}
We recall the definition of toric foliations and their properties, for more details, see \cite{CC}. 
Let $X=X_\Sigma$ be the toric variety defined by a fan $\Sigma$ in $N_\bR$. 
A subsheaf $\cF\subseteq \cT_X$ is called $T$-\emph{invariant} or \emph{torus-invariant} if for any $t\in T$ we have $t^*\mc F=\mc F$ as subsheaves under the natural isomorphism $t^*\cT_X\simeq \cT_X$. 
A foliation $\cF\subseteq \cT_X$ is called a \emph{toric foliation} if $\cF$ is $T$-invariant. 

We recall the following description and properties for toric foliations: 
\begin{proposition}[{\cite[Proposition 3.1]{CC} or \cite[Lemma 2.1.5]{pang2015harder}}]\label{1-1}
Let $\Sigma$ be a fan in $N_\bR$ and $X_{\Sigma,N}$ be the toric variety defined by $\Sigma$. 
Then there is a one-to-one correspondence between the set of toric foliations on $X_{\Sigma,N}$ and the set of complex vector subspaces $W\subseteq N_\bC$. 
\end{proposition}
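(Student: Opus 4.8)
The plan is to reduce everything to the open dense torus $T_N\subseteq X_{\Sigma,N}$, where the tangent sheaf is canonically trivialized and the torus action on that trivialization is as simple as possible. Fix a $\bZ$-basis $v_1,\dots,v_n$ of $N$ and, for $v\in N_\bC$, let $\partial_v$ be the invariant derivation of $\bC[M]$ determined by $\partial_v(\chi^m)=\langle m,v\rangle\,\chi^m$. Then $\partial_v\leftrightarrow 1\otimes v$ gives an isomorphism of $\cO_{T_N}$-modules $\cT_X|_{T_N}\cong\cO_{T_N}\otimes_\bC N_\bC$ under which the translation action of $T=T_N$ is $t\cdot(f\otimes v)=(t\cdot f)\otimes v$, i.e.\ trivial on the factor $N_\bC$. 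Since $T_N$, acting on itself, is a trivial torsor over $\operatorname{Spec}\bC$, descent identifies $T$-equivariant coherent sheaves on $T_N$ with finite-dimensional $\bC$-vector spaces via the fibre at the identity $e$, with inverse $V\mapsto\cO_{T_N}\otimes_\bC V$. Consequently every $T$-invariant coherent subsheaf of $\cO_{T_N}\otimes_\bC N_\bC$ equals $\cO_{T_N}\otimes_\bC W$ for a unique $\bC$-subspace $W\subseteq N_\bC$, namely its fibre at $e$. This is how I would produce the assignment $\cF\mapsto W$ and the inverse candidate $W\mapsto\cF_W$.

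For injectivity I would first record the elementary fact that a \emph{saturated} subsheaf of $\cT_X$ is determined by its restriction to any dense open set: if $\cF,\cF'\subseteq\cT_X$ have torsion-free quotients and agree on a dense open $U$, then on any affine chart $\operatorname{Spec}A$ (with $A$ a domain, as $X_{\Sigma,N}$ is irreducible) a local section of $\cF'$ lies in $\cF$ after multiplication by some nonzero $g\in A$, hence already lies in $\cF$ because $\cT_X(A)/\cF(A)$ is torsion-free; so $\cF'\subseteq\cF$, and by symmetry $\cF=\cF'$. Applying this with $U=T_N$ shows $\cF\mapsto W$ is injective, since two toric foliations with the same $W$ have the same restriction to $T_N$.

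For surjectivity, given $W\subseteq N_\bC$ choose a basis $w_1,\dots,w_r$ and, on each invariant affine chart $U_\sigma=\operatorname{Spec}\bC[\sigma^\vee\cap M]$, set
\[
\cF_W(U_\sigma):=\operatorname{Der}_\bC\!\big(\bC[\sigma^\vee\cap M]\big)\ \cap\ \bigoplus_{j=1}^{r}\bC[M]\,\partial_{w_j}\ \subseteq\ \operatorname{Der}_\bC\!\big(\bC[M]\big).
\]
These patch to a subsheaf $\cF_W\subseteq\cT_X$ because all terms are computed inside $\cT_X|_{T_N}$, and $\cF_W$ is coherent because each $\cF_W(U_\sigma)$ is a submodule of the finitely generated module $\operatorname{Der}_\bC(\bC[\sigma^\vee\cap M])=\Gamma(U_\sigma,\cT_X)$ over the Noetherian ring $\bC[\sigma^\vee\cap M]$. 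By construction $\cF_W|_{T_N}=\cO_{T_N}\otimes_\bC W$, which is a subbundle; hence $\cT_X/\cF_W$ is torsion-free (if $gs\in\cF_W$ with $g\ne0$ and $s\in\cT_X$, then $s|_{T_N}\in\cO_{T_N}\otimes W$, so $s\in\cF_W$), $\cF_W$ is closed under the Lie bracket (the $\partial_{w_j}$ commute, so a bracket of local sections of $\cF_W$ is a section of $\cT_X$ landing in $\cO_{T_N}\otimes W$ over $T_N$), and $\cF_W$ is $T$-invariant (both $\operatorname{Der}_\bC(\bC[\sigma^\vee\cap M])$ and $\bigoplus_j\bC[M]\partial_{w_j}$ are). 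Thus $\cF_W$ is a toric foliation with $(\cF_W|_{T_N})_e=W$, and, by the injectivity argument, any toric foliation $\cF$ equals $\cF_W$ for $W=(\cF|_{T_N})_e$ since both restrict to $\cO_{T_N}\otimes W$ on $T_N$; note also that $\dim_\bC W$ is the rank of $\cF_W$, as rank is read off at the generic point, which lies in $T_N$.

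The only genuine subtlety is bookkeeping rather than geometry: one must use saturation precisely where it is needed, namely to recover $\cF$ from $\cF|_{T_N}$ (non-saturated subsheaves agreeing on $T_N$ are far from unique), and one must check that the naive recipe ``intersect $\cT_X$ with $\cO_{T_N}\otimes W$ chart by chart'' really yields a coherent sheaf — both handled by the torsion-free and Noetherian arguments above. Once the trivialization $\cT_X|_{T_N}\cong\cO_{T_N}\otimes N_\bC$ with $T$ acting trivially on $N_\bC$ is in hand, the statement is a repackaging of equivariant descent on the torus together with standard properties of saturated sheaves on normal varieties.
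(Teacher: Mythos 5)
Your proof is correct. The paper states this result by citing \cite[Proposition 3.1]{CC} and \cite[Lemma 2.1.5]{pang2015harder} rather than proving it, and your argument — trivialize $\cT_X|_{T_N}\cong\cO_{T_N}\otimes N_\bC$ by invariant vector fields, identify the $T$-invariant coherent subsheaf via descent on the trivial torsor $T_N$ (equivalently, via the $M$-grading/character decomposition of $\bC[M]\otimes N_\bC$) as $\cO_{T_N}\otimes W$, then use saturation of $\cF$ to recover it uniquely from its restriction to the dense torus, with the explicit chart-by-chart intersection giving the inverse $W\mapsto\cF_W$ — is exactly the standard proof used in those references.
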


We will use $\cF_{W,\Sigma,N}$ to denote the toric foliation on $X_{\Sigma,N}$ corresponding to the complex vector subspace $W\subseteq N_\bC$. 
If we have another fan $\Sigma'$ in the same $N_\mb R$, the transformed foliation on $Y = X_{\Sigma',N}$ is nothing but $\cF_{W,\Sigma',N}$. Hence we can unambiguously write $\mc F_W$ to denote the transformed foliation on any birational model obtained by modifying the defining fan.   

\begin{proposition}[{\cite[Corollary 3.3]{CC}}]
    Let $\cF_W$ be a toric foliation on a toric variety $X_\Sigma$. 
    For any $\rho\in\Sigma(1)$, $D_\rho$ is invariant if and only if $\rho\nsubseteq W$. 
\end{proposition}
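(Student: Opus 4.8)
The plan is to reduce everything to the generic point $\eta$ of $D_\rho$ and to a single affine chart. First I would note the elementary fact that, for a prime divisor $D$ on a normal variety $X$, $D$ is $\cF$-invariant if and only if the stalk $\cF_\eta$ at the generic point $\eta=\eta_D$ preserves the maximal ideal $(\cI_D)_\eta$ of the discrete valuation ring $\cO_{X,\eta}$; indeed, since $(\cI_D)_x=\cO_{X,x}\cap(\cI_D)_\eta$ for every $x\in D$, invariance at $\eta$ alone forces invariance at every point of $D$. So it suffices to understand $\cF_W$ near $\eta$, which lies in the chart $U_\rho$, and to recall that $\rho\subseteq W$ is equivalent to $v_\rho\in W$ because $W$ is a complex subspace. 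Completing the primitive generator $v_\rho$ to a $\bZ$-basis $v_\rho=e_1,e_2,\dots,e_n$ of $N$ presents $U_\rho$ as $\on{Spec}\bC[x_1,x_2^{\pm1},\dots,x_n^{\pm1}]$ with $x_i=\chi^{e_i^{*}}$ and $D_\rho=\{x_1=0\}$; in these coordinates the invariant derivation attached to $v\in N_\bC$ is $\partial_v=\sum_{i=1}^{n}\langle e_i^{*},v\rangle\,x_i\partial_{x_i}$, and by the construction underlying Proposition~\ref{1-1} the foliation $\cF_W$ on $U_\rho$ is the saturation in $\cT_{U_\rho}$ of the subsheaf $\cG:=\sum_{v\in W}\cO_{U_\rho}\,\partial_v$.

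The key reformulation is that every generator $\partial_v$ already lies in the logarithmic tangent sheaf $\cT_{U_\rho}(-\log D_\rho)=\cO\,x_1\partial_{x_1}\oplus\bigoplus_{i\ge2}\cO\,\partial_{x_i}$, that is, the subsheaf of derivations preserving the ideal $(x_1)$. Combined with the reduction above, this says that $D_\rho$ is $\cF_W$-invariant exactly when $\cF_W\subseteq\cT_{U_\rho}(-\log D_\rho)$ in a neighbourhood of $\eta$, so the whole question becomes whether passing from $\cG$ to its saturation introduces a derivation with a $\partial_{x_1}$-component of order zero, i.e. whether saturation enlarges $\cG$ transversally to $D_\rho$.

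I would then settle this dichotomy by working over the discrete valuation ring $R=\cO_{X,\eta}$ with uniformizer $x_1$, inside which $(\cF_W)_\eta$ is the saturation of $\cG_\eta$ in $\cT_R=\bigoplus_i R\,\partial_{x_i}$. If $v_\rho=e_1\in W$, then $x_1\partial_{x_1}=\partial_{e_1}\in\cG_\eta$; since $\cT_R/(\cF_W)_\eta$ is torsion-free and $x_1\cdot\partial_{x_1}\in(\cF_W)_\eta$, the class of $\partial_{x_1}$ is torsion, hence zero, so $\partial_{x_1}\in(\cF_W)_\eta$; as $\partial_{x_1}(x_1)=1\notin(x_1)$, the divisor $D_\rho$ is not invariant. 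If $v_\rho\notin W$, pick a $\bC$-basis $w_1,\dots,w_k$ of $W$; in the frame $\partial_{x_1},\dots,\partial_{x_n}$ the coefficient matrix of $\partial_{w_1},\dots,\partial_{w_k}$ has first column divisible by $x_1$ and remaining columns constant, and some maximal ($k\times k$) minor avoiding the first column is a nonzero scalar. This is exactly where the hypothesis enters: the non-vanishing of such a minor is the linear independence of the images of $w_1,\dots,w_k$ in $N_\bC/\bC v_\rho$, which holds if and only if $v_\rho\notin W$. A nonzero scalar is a unit of $R$, so $\cG_\eta$ is a direct summand of $\cT_R$, hence already saturated; therefore $(\cF_W)_\eta=\cG_\eta\subseteq\cT_R(-\log D_\rho)$ and $D_\rho$ is invariant. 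The step carrying the real content, and the one I would take most care over, is precisely this last dichotomy; the adapted coordinates together with the minor computation are what make it precise, while the reduction to $\eta$, the identification of the logarithmic tangent sheaf, and the fact that a finitely generated torsion-free module over a DVR is free are routine.
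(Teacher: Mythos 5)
Your proof is correct. The paper cites this as \cite[Corollary 3.3]{CC} without reproducing the argument, so there is no in-paper proof to compare against; judged on its own, your argument is sound and complete. The reduction to the stalk at the generic point of $D_\rho$ (using normality so that $(\cI_{D_\rho})_x=\cO_{X,x}\cap(\cI_{D_\rho})_\eta$), the identification of $\cF_W|_{U_\rho}$ as the saturation of $\cG=\sum_{v\in W}\cO\,\partial_v$, and the observation that every $\partial_v$ lies in $\cT_{U_\rho}(-\log D_\rho)$ are all correct, and the final dichotomy via the DVR $R=\cO_{X,\eta}$ works: when $v_\rho\in W$, torsion-freeness of $\cT_R/(\cF_W)_\eta$ together with $x_1\partial_{x_1}\in(\cF_W)_\eta$ forces $\partial_{x_1}\in(\cF_W)_\eta$; when $v_\rho\notin W$, a unit $k\times k$ minor splits the inclusion $\cG_\eta\hookrightarrow\cT_R$, so $\cG_\eta$ is already saturated and contained in the log tangent sheaf. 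One small imprecision: in the frame $\partial_{x_1},\dots,\partial_{x_n}$ the $i$-th column of the coefficient matrix for $i\ge 2$ is $\langle e_i^*,w_j\rangle\,x_i$, which is a \emph{unit multiple} of a constant rather than literally constant; this does not affect the argument (the relevant minor is still a unit), but phrasing it in the frame $\{x_1\partial_{x_1},x_2\partial_{x_2},\dots,x_n\partial_{x_n}\}$, or equivalently $\{x_1\partial_{x_1},\partial_{x_2},\dots,\partial_{x_n}\}$ after rescaling, would make the matrix genuinely of the advertised block form.
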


\begin{proposition}[{\cite[Proposition 3.7]{CC}}]
    Let $\cF_W$ be a toric foliation on a toric variety $X_\Sigma$. 
    Then we have $K_\cF\sim -\sum_{\rho\in\Sigma(1),\,\rho\subseteq W} D_\rho$. 
\end{proposition}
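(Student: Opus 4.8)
The canonical sheaf $\cO_X(-K_\cF)$ is by definition $\det\cF_W$, so the plan is to identify this reflexive rank-one sheaf with $\cO_X\big(\sum_{\rho\subseteq W}D_\rho\big)$ by producing an explicit rational section of $\det\cF_W$ and computing its order along each torus-invariant prime divisor. I would fix a $\bC$-basis $w_1,\dots,w_r$ of $W$ and, under the identification $\cT_{T_N}\cong N_\bC\otimes\cO_{T_N}$ by invariant derivations, let $\xi_j$ be the invariant vector field on $T_N$ attached to $w_j$; then $s:=\xi_1\wedge\cdots\wedge\xi_r$ is a nowhere-vanishing section of $\det\cF_W$ over $T_N$, hence a nonzero rational section over $X_\Sigma$, so that $-K_\cF\sim\on{div}(s)$, which is supported on the $D_\rho$ because $s$ generates $\det\cF_W$ over the dense torus. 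Since the charts $U_{\rho,N}$ for $\rho\in\Sigma(1)$ together contain every point of $X_\Sigma$ of codimension at most one, it then suffices to compute $\on{ord}_{D_\rho}(s)$ on $U_{\rho,N}$ for each ray $\rho$.

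For the local computation I would extend the primitive generator $v_\rho$ to a $\bZ$-basis $v_\rho=e_1,e_2,\dots,e_n$ of $N$, which identifies $U_{\rho,N}$ with $\on{Spec}\bC[x_1,x_2^{\pm1},\dots,x_n^{\pm1}]$ (where $x_i=\chi^{e_i^*}$), with $D_\rho=\{x_1=0\}$ and $\cT_{U_{\rho,N}}$ free on $\partial_{x_1},x_2\partial_{x_2},\dots,x_n\partial_{x_n}$. Writing $w_j=\sum_i c_{ji}e_i$ with $c_{ji}\in\bC$, the field $\xi_j=c_{j1}x_1\partial_{x_1}+\sum_{i\ge 2}c_{ji}x_i\partial_{x_i}$ extends regularly across $D_\rho$, and by the description of toric foliations $\cF_W|_{U_{\rho,N}}$ is the saturation of the subsheaf generated by $\xi_1,\dots,\xi_r$. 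In the induced basis of $\wedge^r\cT_{U_{\rho,N}}$, the component of $s$ indexed by $I\subseteq\{1,\dots,n\}$ with $|I|=r$ is the $r\times r$ minor $m_I$ of the constant matrix $(c_{ji})$ on columns $I$ when $1\notin I$, and $x_1m_I$ when $1\in I$. Hence $\on{ord}_{D_\rho}(s)=0$ iff some $m_I$ with $1\notin I$ is nonzero, i.e.\ iff the matrix $(c_{ji})_{i\ge 2}$ has rank $r$; since $(c_{ji})$ has full row rank $r$, this fails exactly when some nonzero combination of the $w_j$ lies in $\bC e_1$, i.e.\ exactly when $v_\rho\in W$, that is $\rho\subseteq W$. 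In that remaining case $m_I=0$ for all $I$ with $1\notin I$ while $m_I\ne 0$ for some $I$ with $1\in I$, so $\on{ord}_{D_\rho}(s)=1$. (This is consistent with the earlier description of the invariant divisors as exactly those $D_\rho$ with $\rho\not\subseteq W$.)

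Carrying this out for every ray yields $\on{div}(s)=\sum_{\rho\in\Sigma(1),\,\rho\subseteq W}D_\rho$ and therefore $K_\cF\sim-\sum_{\rho\subseteq W}D_\rho$. The step I expect to require the most care is the verification that $s$ really is a rational section of the \emph{invertible} sheaf $\det\cF_W$ whose order may be read off as above — equivalently, that $\det\cF_W|_{U_{\rho,N}}$ is exactly the saturation of $\cO_{U_{\rho,N}}\!\cdot s$ inside $\wedge^r\cT_{U_{\rho,N}}$. Here I would use that $\cF_W|_{U_{\rho,N}}$, being saturated in $\cT_X$, is reflexive on the smooth chart $U_{\rho,N}$ (a kernel of a map from a locally free sheaf to a torsion-free one), that the determinant of such a subsheaf is again a saturated reflexive rank-one, hence invertible, subsheaf of $\wedge^r\cT_{U_{\rho,N}}$, and that it coincides with $\langle s\rangle$ over the dense torus; dividing $s$ by $x_1^{\on{ord}_{D_\rho}(s)}$ then produces a local generator, which pins the order down. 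If this bookkeeping proves awkward, an alternative is to run the entirely parallel local computation for $\det(\cT_X/\cF_W)$ and combine it with $\det\cT_X=\cO_X(-K_X)$ and the toric formula $K_X=-\sum_\rho D_\rho$; I would nonetheless present the direct argument above, which is the shorter of the two.
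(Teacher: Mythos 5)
This proposition is imported from \cite[Proposition 3.7]{CC} without proof, so there is nothing in the present paper to compare against. Your argument is correct and is the natural one: the invariant $r$-vector field $s=\xi_1\wedge\cdots\wedge\xi_r$ is a rational section of $\det\cF_W$ trivializing it over the torus, and the local computation on each smooth chart $U_{\rho,N}$ correctly yields $\on{ord}_{D_\rho}(s)=1$ precisely when $\rho\subseteq W$; the point you flag at the end (that the order of $s$ in $\det\cF_W$ equals the $x_1$-content of its coordinate vector in $\wedge^r\cT_{U_{\rho,N}}$) goes through because, at the generic point of $D_\rho$, the local ring is a DVR, so the saturated $\cF_W$ is a free direct summand of $\cT$ there, making $\wedge^r\cF_W$ already saturated in $\wedge^r\cT$ and equal to the saturation of $\cO\cdot s$.
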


\section{Adjoint foliated structure}
As \cite[Example 3.9]{CJV24} provides an unbounded example for the Fano foliations when both $X$ and $\cF$ have at worst log canonical singularities. 
Then we would like to impose $\varepsilon$-log canonicity on foliation singularities. 
However, when $\cF$ is algebraically integrable or toric, the set consisting of $\varepsilon$-log canonical Fano foliations is empty. 
It is therefore natural to consider the adjoint foliated structure. 

\begin{definition}[{\cite{M7}}]
    We say $(X,\cF,\Delta,t)$ is an \emph{adjoint foliated structure} if the following conditions hold:
\begin{enumerate}
    \item $X$ is a normal projective variety, 
    \item $\cF$ is a foliation on $X$, 
    \item $\Delta$ is an effective $\bR$-divisor, 
    \item $t\in [0,1]$, and
    \item the adjoint canonical divisor $K_{(X,\cF,\Delta,t)} = tK_\cF + (1-t)K_X+\Delta$ is $\bR$-Cartier.
\end{enumerate}
If $\Delta=0$, then we will denote the foliated structure $(X,\cF,0,t)$ as the triple $(X,\cF,t)$. 
\end{definition}

Let $\pi\colon X'\to X$ be a birational morphism and $\cF'=\pi^{-1}\cF$. 
We define $\Delta'$ as the unique $\bR$-divisor such that 
\[K_{(X',\cF',\Delta',t)} = \pi^*K_{(X,\cF,\Delta,t)}.\]
For any prime divisor $E$ on $X'$, we define the \emph{adjoint discrepancy} to be $-\on{mult}_E\Delta'$ and the \emph{adjoint log discrepancy} to be $a(E,\cF,\Delta,t)=-\on{mult}_E\Delta'+t\iota(E)+(1-t)$. 
For $\delta\in [0,1]$, we say $(X,\cF,\Delta,t)$ is \emph{$\delta$-log canonical} or \emph{$\delta$-lc} if $a(E,\cF,\Delta,t)\geq (t\iota(E)+(1-t))\delta$ for any divisor $E$ over $X$. 

\begin{remark}
For $t\in (0,1]$, we note that $(X,\cF,\Delta,t)$ is $\delta$-lc if and only $(X,\cF,B)$ is $(\varepsilon=\frac{1-t}{t},\delta)$-adjoint lc in the sense of \cite[Definition 2.12]{spicer2023effective} where $B=\Delta_{\textnormal{n-inv}}+\frac{1}{1-t}\Delta_{\textnormal{inv}}$ for $t\neq 1$ and $B=\Delta_{\textnormal{n-inv}}$ for $t=1$. 
\end{remark}

The following Proposition provides an evidence that the definition of adjoint log discrepancy is canonical:
\begin{proposition}\label{prop:adjoint_log_discrepancy}
    Let $\cF=\cF_W$ be a toric foliation on a $\bQ$-factorial toric variery $X=X_\Sigma$, where $\Sigma$ is a fan in $N_\bR$ and  $W\subset N_\bC$ is a linear subspace. 
    Let $\Delta$ be a torus-invariant divisor on $X$ and $\Sigma'$ be a refinement of $\Sigma$ and $\pi\colon X_{\Sigma'}\to X$ be the corresponding toric birational morphism. 
    Then for any $\rho\in\Sigma'(1)\setminus\Sigma(1)$, the adjoint log discrepancy $a(D_\rho,\cF_W,\Delta,t)=\phi_{K_{(X,\cF_W,\Delta,t)}}(v_\rho)$ where $v_\rho$ is the primitive generator of $\rho$. 
\end{proposition}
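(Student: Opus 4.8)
The plan is to reduce everything to support functions and the linearity of discrepancies under toric refinements. First I would recall the standard toric formula: for a torus-invariant $\bR$-Cartier divisor $D$ on $X_\Sigma$ with support function $\phi_D$, and for a refinement $\pi\colon X_{\Sigma'}\to X$, one has $\pi^*D = \sum_{\rho\in\Sigma'(1)} (-\phi_D(v_\rho))\,D_\rho$; this is exactly the statement that $\phi_{\pi^*D} = \phi_D$ as functions on $|\Sigma|=|\Sigma'|$, which follows directly from the Lemma-Definition on support functions together with the fact that pulling back a Cartier divisor along a toric morphism composes support functions with the underlying linear map (here the identity on $N_\bR$). So I would first observe that $K_{(X,\cF_W,\Delta,t)} = tK_{\cF_W} + (1-t)K_X + \Delta$ is a torus-invariant $\bR$-Cartier divisor (it is $\bR$-Cartier by the hypothesis that $(X,\cF,\Delta,t)$ is an adjoint foliated structure, or simply because $X$ is $\bQ$-factorial), hence has a well-defined support function $\phi_{K_{(X,\cF_W,\Delta,t)}}$.

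Next, I would spell out how $K_{(X',\cF',\Delta',t)}$ decomposes. By definition $\Delta'$ is determined by $K_{(X',\cF',\Delta',t)} = \pi^* K_{(X,\cF_W,\Delta,t)}$, where $\cF' = \pi^{-1}\cF_W = \cF_W$ on $X_{\Sigma'}$ (using the remark after Proposition~\ref{1-1} that the transformed toric foliation is again $\cF_W$ for the same $W$). Now for a ray $\rho\in\Sigma'(1)\setminus\Sigma(1)$, the divisor $D_\rho$ is $\pi$-exceptional, so $\on{mult}_{D_\rho}(\pi^*K_X) $ and $\on{mult}_{D_\rho}(\pi^*\Delta)$ and $\on{mult}_{D_\rho}(\pi^*K_{\cF_W})$ are each computed by evaluating the respective support function at $v_\rho$ (with a sign), and $\on{mult}_{D_\rho}$ of the exceptional divisor $D_\rho$ itself in $K_{X'}$ or $K_{\cF'}$ or $\Delta'$ is the coefficient we want to read off. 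Concretely: writing $K_{X'} = \pi^*K_X + \sum_{\rho\in\Sigma'(1)\setminus\Sigma(1)} a_\rho^X D_\rho$ and similarly $K_{\cF'} = \pi^*K_{\cF_W} + \sum a_\rho^\cF D_\rho$, the toric formulas give $a_\rho^X = \phi_{K_X}(v_\rho) - (\text{the coefficient }v_\rho\text{ would contribute}) $; more cleanly, $K_{X'} = -\sum_{\rho\in\Sigma'(1)} D_\rho$ has support function sending $v_\rho\mapsto 1$, while $\pi^*K_X$ has support function $\phi_{K_X}$, so $a_\rho^X = -\phi_{K_X}(v_\rho) + 1$... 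I would instead organize it via the single identity below, which avoids separating the pieces.

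The cleanest route: combine the three into the adjoint divisor directly. We have
\[
K_{(X',\cF',\Delta',t)} \;=\; t K_{\cF'} + (1-t) K_{X'} + \Delta'.
\]
On $X_{\Sigma'}$, $K_{\cF'} \sim -\sum_{\rho\in\Sigma'(1),\,\rho\subseteq W} D_\rho$ by \cite[Proposition 3.7]{CC}, and $K_{X'} = -\sum_{\rho\in\Sigma'(1)} D_\rho$; write $\Delta' = \sum_{\rho\in\Sigma'(1)} \delta'_\rho D_\rho$. Then the coefficient of $D_\rho$ in $K_{(X',\cF',\Delta',t)}$ is $-t\,[\rho\subseteq W] - (1-t) + \delta'_\rho = -\bigl(t\iota(D_\rho) + (1-t)\bigr) + \delta'_\rho$, using $\iota(D_\rho) = 0$ iff $\rho\subseteq W$ iff $D_\rho$ is $\cF_W$-invariant (the Proposition from \cite[Corollary 3.3]{CC}, noting $\rho\subseteq W$ is the same as the primitive generator lying in $W$ since $W$ is a subspace). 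On the other hand, $K_{(X',\cF',\Delta',t)} = \pi^*K_{(X,\cF_W,\Delta,t)}$ has support function equal to $\phi := \phi_{K_{(X,\cF_W,\Delta,t)}}$, so its coefficient of $D_\rho$ is $-\phi(v_\rho)$. Equating, $-\bigl(t\iota(D_\rho)+(1-t)\bigr) + \delta'_\rho = -\phi(v_\rho)$, i.e. $\delta'_\rho = t\iota(D_\rho) + (1-t) - \phi(v_\rho)$. Since $\on{mult}_{D_\rho}\Delta' = \delta'_\rho$ and $D_\rho$ appears with coefficient $1$ as an exceptional divisor in its own right, the adjoint log discrepancy is
\[
a(D_\rho,\cF_W,\Delta,t) \;=\; -\on{mult}_{D_\rho}\Delta' + t\iota(D_\rho) + (1-t) \;=\; \phi(v_\rho),
\]
as claimed.

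I expect the only genuine subtlety to be bookkeeping the signs and the normalization of support functions (the convention $\phi_D(v_\rho) = -a_\rho$), together with making sure $\iota(D_\rho)$ is correctly identified with $[\rho\not\subseteq W]$ on the refined fan — both are settled by the cited propositions once stated carefully. A secondary point to address is that $\pi^{-1}\cF_W = \cF_W$ on $X_{\Sigma'}$, which is the content of the remark following Proposition~\ref{1-1}; with that in hand the computation is essentially the display above.
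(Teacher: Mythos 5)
Your proof is correct and takes essentially the same route as the paper's: both solve for $\Delta'$ from $K_{(X',\cF',\Delta',t)} = \pi^* K_{(X,\cF,\Delta,t)}$, read off $\on{mult}_{D_\rho}\Delta'$ via the toric formulas for $K_{X'}$ and $K_{\cF'}$, and identify $-\on{mult}_{D_\rho}(\pi^* K_{(X,\cF,\Delta,t)})$ with the support function value $\phi_{K_{(X,\cF,\Delta,t)}}(v_\rho)$. One small slip in your prose: you write ``$\iota(D_\rho)=0$ iff $\rho\subseteq W$ iff $D_\rho$ is $\cF_W$-invariant,'' but \cite[Corollary 3.3]{CC} gives the opposite ($D_\rho$ invariant iff $\rho\nsubseteq W$, so $\iota(D_\rho)=1$ iff $\rho\subseteq W$); your displayed identity $[\rho\subseteq W]=\iota(D_\rho)$ is the correct one and the computation goes through unchanged.
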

\begin{proof}
    Let $\Delta'$ be an $\bR$-divisor on $X'$ such that $K_{(X',\cF',\Delta',t)} = \pi^*K_{(X,\cF,\Delta,t)}$ where $\cF'=\pi^{-1}\cF$. 
    Note that $\on{mult}_{D_\rho}K_{\cF'} = \iota(D_\rho)$ and $\on{mult}_{D_\rho}K_{X'}=1$. 
    Then 
    \begin{align*}
        a(D_\rho,\cF_W,\Delta,t) &= -\on{mult}_{D_\rho}\Delta' + t\iota(D_\rho) +(1-t) \\
        &= -\on{mult}_{D_\rho}(\pi^*K_{(X,\cF,\Delta,t)} - tK_{\cF'} - (1-t)K_{X'}) + t\iota(D_\rho) +(1-t) \\
        &= -\on{mult}_{D_\rho}(\pi^*K_{(X,\cF,\Delta,t)}) \\
        &= \phi_{(K_{X},\cF,\Delta,t)}(v_\rho).
    \end{align*}
\end{proof}

The following lemma gives a combinatorial criterion of $\delta$-log canonicity for the toric adjoint foliated structures: 
\begin{lemma}\label{lem:delta_lc_combinatorial}
    Let $\cF=\cF_W$ be a toric foliation on a toric variery $X=X_\Sigma$, where $\Sigma$ is a fan in $N_\bR$ and  $W\subseteq N_\bC$ is a linear subspace. 
    Let $\Delta$ be a torus-invariant divisor on $X$. 

    Then the adjoint foliated structure $(X,\cF,\Delta,t)$ is $\delta$-lc if and only if 
    \[\phi_{K_{(X,\cF,\Delta,t)}}(v) \geq\begin{cases}
        (1-t)\delta & \mbox{if } v\notin W \\
        \delta & \mbox{if } v\in W
    \end{cases}\]
    for all primitive elements $v\in|\Sigma|$. 
\end{lemma}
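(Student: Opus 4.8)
The plan is to match the $\delta$-log canonicity condition, which a priori ranges over all divisors $E$ over $X$, with the stated condition, which only involves the torus-invariant prime divisors $D_\rho$ produced by refinements of $\Sigma$. The bookkeeping device is that, for such a $D_\rho$ with primitive generator $v_\rho$, one has $\iota(D_\rho)=1$ exactly when $\rho\subseteq W$, i.e.\ $v_\rho\in W$, by \cite[Corollary 3.3]{CC}; hence the $\delta$-lc threshold $(t\,\iota(D_\rho)+(1-t))\delta$ equals $\delta$ if $v_\rho\in W$ and $(1-t)\delta$ if $v_\rho\notin W$, which is precisely the dichotomy in the statement, while Proposition~\ref{prop:adjoint_log_discrepancy} translates $a(D_\rho,\cF,\Delta,t)$ into $\phi_{K_{(X,\cF,\Delta,t)}}(v_\rho)$.

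The ``only if'' direction is then immediate: given a primitive $v\in|\Sigma|$, pass to a refinement $\Sigma'$ of $\Sigma$ (say the star subdivision at $v$) in which $\rho:=\bR_{\geq0}v$ is a ray; applying $\delta$-lc to $D_\rho$ and Proposition~\ref{prop:adjoint_log_discrepancy} gives $\phi_{K_{(X,\cF,\Delta,t)}}(v)=a(D_\rho,\cF,\Delta,t)\geq(t\,\iota(D_\rho)+(1-t))\delta$, which is the asserted inequality at $v$. Every primitive element of $|\Sigma|$ spans a ray of some refinement, so all of them are covered.

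For the ``if'' direction one must bound $a(E,\cF,\Delta,t)$ from below for an arbitrary prime divisor $E$ over $X$. Let $v_E\in N$ be its monomial valuation, $v_E(\chi^m)=\on{ord}_E(\chi^m)$; then $v_E\in|\Sigma|$ because $E$ has a nonempty center on $X$, and $\on{ord}_E\geq v_E$ as valuations. Assume $t<1$ (the case $t=1$ being the same argument applied to the foliated log pair $(X,\cF,\Delta)$). Writing $K_{(X,\cF,\Delta,t)}=tK_{\cF_W}+(1-t)\bigl(K_X+\tfrac{1}{1-t}\Delta\bigr)$ yields the decomposition
\[a(E,\cF,\Delta,t)=(1-t)\,a\bigl(E;X,\tfrac{1}{1-t}\Delta\bigr)+t\,\bigl(a(E,\cF_W)+\iota(E)\bigr),\]
the first summand being the classical log discrepancy of the torus-invariant pair $\bigl(X,\tfrac{1}{1-t}\Delta\bigr)$ and the second the foliated log discrepancy of $\cF_W$. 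I would then invoke: (i) the standard fact that over a torus-invariant pair the log discrepancy of $E$ is at least that of its monomial valuation, $a\bigl(E;X,\tfrac{1}{1-t}\Delta\bigr)\geq\phi_{K_X+\frac{1}{1-t}\Delta}(v_E)$; (ii) the analogous statement for the toric foliation $\cF_W$, $a(E,\cF_W)+\iota(E)\geq\phi_{K_{\cF_W}}(v_E)$ (both pieces ultimately because $\on{ord}_E\geq v_E$ and $\Delta$, $\cF_W$ are torus-invariant, cf.\ \cite{CC}); and (iii) the one-sided comparison $\iota(E)\leq\iota(D_{\rho_E})$ for $\rho_E=\bR_{\geq0}v_E$, in particular $\iota(E)=1\Rightarrow v_E\in W$. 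Adding (i) and (ii) gives $a(E,\cF,\Delta,t)\geq\phi_{K_{(X,\cF,\Delta,t)}}(v_E)$; writing $v_E=\lambda v_\rho$ with $v_\rho$ primitive and $\lambda\in\bZ_{>0}$, the hypothesis at $v_\rho$ together with $\lambda\geq1$ gives $\phi_{K_{(X,\cF,\Delta,t)}}(v_E)\geq(1-t)\delta$, and $\geq\delta$ when $v_\rho\in W$; together with (iii), which forces $\iota(E)=0$ whenever $v_\rho\notin W$, this yields $a(E,\cF,\Delta,t)\geq(t\,\iota(E)+(1-t))\delta$ in both cases. When $v_E=0$ the center of $E$ meets the dense torus, where $\cF_W$ is a subbundle of $\cT_X$, so $E$ is $\cF_W$-invariant (a regular foliation has no non-invariant divisor over its regular locus) and $\iota(E)=0$; since the torus is smooth and $\tfrac{1}{1-t}\Delta$ avoids the center, both terms of the decomposition are non-negative with the classical one at least $1$, whence $a(E,\cF,\Delta,t)\geq(1-t)\geq(1-t)\delta$.

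The main obstacle is precisely this reduction from an arbitrary $E$ to its monomial valuation: ingredient (i) is classical, but (ii) and (iii) must be extracted from the theory of toric foliations, and one should be careful that $\iota(E)$ is genuinely \emph{not} a function of $v_E$ alone — for instance $\iota$ can drop from $1$ to $0$ upon blowing up a point of an invariant toric divisor — so only the one-sided estimate (iii) holds, which fortunately is exactly the direction that makes the required lower bound no worse. For $X$ not $\bQ$-factorial, first pass to a toric $\bQ$-factorialization $\hat X\to X$ (a small refinement of $\Sigma$ introducing no new rays): this is crepant for the adjoint canonical divisor and the support function of its pullback is unchanged, so the $\delta$-lc property and the displayed condition are both preserved, and on $\hat X$ the divisors $K_{\cF_W}$ and $K_{\hat X}$ are separately $\bR$-Cartier, so the decomposition above makes sense.
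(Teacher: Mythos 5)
Your ``only if'' direction is fine and coincides with the paper's: star-subdivide at $v$ and read off the adjoint log discrepancy via Proposition~\ref{prop:adjoint_log_discrepancy}. The decomposition
\[
a(E,\cF,\Delta,t)=t\,\bigl(a(E,\cF_W)+\iota(E)\bigr)+(1-t)\,a\Bigl(E;X,\tfrac{1}{1-t}\Delta\Bigr)
\]
is also correct once you pass to a small toric $\bQ$-factorialization so that $K_{\cF_W}$ and $K_X$ are separately $\bR$-Cartier, and you do address this.

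The genuine gap is that ingredients~(ii) and~(iii) are asserted, not established, and they \emph{are} the content of the lemma. Ingredient~(ii), the estimate $a(E,\cF_W)+\iota(E)\geq\phi_{K_{\cF_W}}(v_E)$ for arbitrary $E$ over $X$, is essentially the $t=1$ case of the lemma itself reformulated through Proposition~\ref{prop:adjoint_log_discrepancy}; invoking it ``cf.\ \cite{CC}'' therefore defers the whole difficulty rather than resolving it. Ingredient~(iii), that $\iota(E)=1$ forces $v_E\in W$, is likewise a nontrivial structural assertion about toric foliations that you do not prove. The paper's remark immediately after the statement warns that even for toric $\cF$ one cannot avoid controlling the adjoint log discrepancies of arbitrary (not necessarily torus-invariant) exceptional divisors, and its actual proof, via Lemma~\ref{lem:toroidal_delta_lc_combinatorial} in the appendix, takes a toroidal resolution, reduces to semi-local toric models, and then inducts on a tower of blowups, tracking at each step both the discrepancy contribution and the invariance of the newly created exceptional divisor. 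Your~(ii) and~(iii) are in effect \emph{outputs} of that induction, not known inputs to it, so the ``if'' direction remains unproved as written. You correctly flag them as ``the main obstacle,'' but flagging does not discharge them.

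There is also an error in your treatment of the case $v_E=0$. The assertion that a regular foliation has no non-invariant exceptional divisor over its regular locus is false for $r<n$: e.g.\ on $\bA^3$ with $\cF=\langle\partial_x,\partial_y\rangle$, blowing up the line $\{x=1,\,y=z\}$ (which meets the dense torus and is transverse to the leaves $\{z=c\}$) produces an exceptional divisor with $\iota(E)=1$ and $v_E=0$. Consequently the bound you derive there, $a(E,\cF,\Delta,t)\geq(1-t)\geq(1-t)\delta$, is not the one you need: when $\iota(E)=1$ you must show $a\geq\delta$. The desired inequality is still true (the foliated summand contributes at least $t$ because the foliation is a subbundle near the center), but it requires a separate argument for the foliated discrepancy, not the classical-term-only estimate you give.
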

\begin{proof}
    This is a special case of Lemma~\ref{lem:toroidal_delta_lc_combinatorial}. 
\end{proof}
\begin{remark}
    Even if we want to give a direct proof for the toric case, it is inevitable to consider the toroidal case in the appendix as we need to check the adjoint log discrepancies for all exceptional divisors, no matter whether they are torus-invariant or not.
\end{remark}

\section{Boundedness of Fano ajoint foliated structures}
In this section, we will prove the main Theorem~\ref{thm:bounded_adjoint_Fano}. 
Before we state and prove the main Theorem, we provide some examples to demonstrate some properties of Fano foliations. 
\begin{example}[Fano foliations which are not algebraically integrable]
    Let $N=\bZ e_1\oplus\bZ e_2\oplus\bZ e_3$ and $v=-e_1-e_2-e_3$. 
    We consider the complete fan $\Sigma$ with $\Sigma(1) = \{\bR_{\geq 0}e_1, \bR_{\geq 0}e_2, \bR_{\geq 0}e_3, \bR_{\geq 0}v\}$. 
    Note that the toric variety $X_\Sigma=\bP^3$. 
    Let $W_a=\bC e_1+\bC(e_2+ae_3)$ where $a\notin\bQ$. 
    Then $W_a\neq \bC(W_a\cap N)=\bC e_1$ and thus $\cF_{W_a}$ is not algebraically integrable by \cite[Proposition 3.14]{CC}. 
    Moreover, $-K_{\cF_{W_a}} = D_{\bR_{\geq 0}e_1}$ is ample and therefore, $\cF_{W_a}$ is a Fano foliation. 
\end{example}

\begin{example}[Fano foliations on non-Fano smooth varieties]
    Let $s\in\bN$, $u_1=(0,s,1)$, $u_2=(0,s,-1)$, $u_3=(-1,1,0)$, $u_4=(1,0,0)$, $u_5=(0,-1,0)$. 
    Let $\rho_i = \bR_{\geq 0}u_i$ for $i\in\{1,\ldots,5\}$. 
    Let $W = yz$-plane and 
    \[\Sigma(3)=\{\sigma_{134}, \sigma_{234}, \sigma_{135}, \sigma_{145}, \sigma_{235}, \sigma_{245}\}\] 
    where $\sigma_{ijk}=\operatorname{Cone}(u_i,u_j,u_k)$. 
    It is straightforward to see that $X_\Sigma$ is smooth. 

    Note that there are only two primitive collections, $\{\rho_1, \rho_2\}$ and $\{\rho_3,\rho_4,\rho_5\}$. 

    Let $\phi_\cF := \phi_{-K_\cF}$ and $\phi_X := \phi_{-K_X}$. 
    Note that $\varphi_X(u_i)=-1$ for all $i\in\{1,\ldots,5\}$ and 
    \[\varphi_\cF(u_1) = \varphi_\cF(u_2) = \varphi_\cF(u_5) = -1 \mbox{ and } \varphi_\cF(u_3) = \varphi_\cF(u_4) = 0.\]

    We study two primitive collections: 
    \begin{itemize}
        \item For $\{\rho_1,\rho_2\}$, we have $u_1+u_2 = (0,2s,0) = 2su_3+2su_4$. 
        Thus, 
        \begin{align*}
            \varphi_\cF(u_1+u_2) &= \varphi_\cF(2su_3+2su_4) = 0 > -2 = \varphi_\cF(u_1)+\varphi_\cF(u_2) \mbox{ and } \\
            \varphi_X(u_1+u_2) &= \varphi_X(2su_3+2su_4) = -4s \leq -2 = \varphi_X(u_1)+\varphi_X(u_2).
        \end{align*}

        \item For $\{\rho_1,\rho_2\}$, we have $u_3+u_4+u_5 = (0,0,0)$. 
        Thus, 
        \begin{align*}
            \varphi_\cF(u_3+u_4+u_5) &= \varphi_\cF(0) = 0 > -1 = \varphi_\cF(u_3) + \varphi_\cF(u_4) + \varphi_\cF(u_5) \mbox{ and } \\
            \varphi_X(u_3+u_4+u_5) &= \varphi_X(0) = 0 > -3 = \varphi_X(u_3) + \varphi_X(u_4) + \varphi_X(u_5). 
        \end{align*}
    \end{itemize}
    Therefore, $-K_\cF$ is ample but $-K_X$ is not ample by \cite[Theorem 6.4.9]{CLS11}. 
\end{example}

The following lemma shows that there is no $\delta$-lc Fano toric foliations: 
\begin{lemma}\label{lem:Fano_fol_exc_ld_zero}
    Let $\cF:=\cF_W$ be a Fano toric foliation on a complete $\bQ$-factorial toric variety $X_\Sigma$. 
    If $\cF\neq\cT_X$, then there is a non-foliation-invariant exceptional divisor $E$ with foliated log discrepancy zero. 
\end{lemma}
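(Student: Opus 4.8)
The plan is to exploit the combinatorial criterion for ampleness together with the support-function description of $K_\cF$ and $K_X$. Since $\cF = \cF_W$ with $W \neq N_\bC$ (as $\cF \neq \cT_X$), pick a primitive lattice vector $v \in N$ with $v \notin W$; since $\Sigma$ is complete, $v \in |\Sigma|$, so $v$ lies in the relative interior of some cone, and we may even choose $v$ to be a sum of ray generators of a maximal cone $\sigma$. The key observation is that $-K_\cF \sim \sum_{\rho \subseteq W} D_\rho$, so the support function $\phi_{-K_\cF}$ is determined by $\phi_{-K_\cF}(v_\rho) = 0$ if $\rho \not\subseteq W$ and $\phi_{-K_\cF}(v_\rho) = -1$ if $\rho \subseteq W$. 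The divisor $E$ we want to extract will be the toric divisor over $X$ corresponding to the ray $\bR_{\geq 0} v$ in a suitable refinement $\Sigma'$ of $\Sigma$, and by Proposition~\ref{prop:adjoint_log_discrepancy} (in the case $t=1$, $\Delta = 0$) its foliated log discrepancy is $\phi_{-K_\cF}(v)$ — wait, more precisely $a(E,\cF) = \phi_{K_\cF}(v) = -\phi_{-K_\cF}(v)$; I need to set this up so that this value is $0$ and so that $E$ is non-invariant.

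First I would clarify non-invariance: a toric exceptional divisor $D_\rho$ over $X$ is $\cF_W$-invariant iff $\rho \not\subseteq W$, i.e.\ iff $v_\rho \notin W$. So I need the extracted ray to lie \emph{inside} $W$. Hence I should not pick $v \notin W$ but rather a primitive $v \in W \cap N$ that lies in the \emph{interior} of some maximal cone $\sigma$ of $\Sigma$ and is not a ray of $\Sigma$. Such a $v$ exists: because $\cF \neq \cT_X$ is Fano, $W \neq 0$ (else $K_\cF = K_X$ and $X$ would be Fano with $\cF$ the zero foliation, but actually $W \supseteq$ at least one ray since $-K_\cF$ ample forces $\sum_{\rho\subseteq W} D_\rho$ to be a nonzero ample divisor, so there is at least one ray $\rho_0 \subseteq W$). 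Now take $v = v_{\rho_0} + v_\rho$ for a ray $\rho$ adjacent to $\rho_0$ inside a common cone; then $v \in N$, $v$ is in the interior of that $2$-dimensional cone (hence not a ray of $\Sigma$), and $v \in W$ provided we also choose $\rho \subseteq W$ — but there may be only one ray in $W$. If there is a unique ray $\rho_0 \subseteq W$, instead take a star subdivision at $v_{\rho_0}$ scaled appropriately, or better: subdivide along any lattice point $v$ on the ray $\rho_0$ that is a positive multiple of $v_{\rho_0}$ but not primitive — no, that's not a new ray. The cleanest fix: since $W$ is a positive-dimensional $\bC$-subspace, $W \cap N$ is a positive-rank sublattice containing $v_{\rho_0}$; pick $v \in W\cap N$ primitive, not on any ray of $\Sigma$, and lying in $|\Sigma|$ (automatic by completeness). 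Then compute $a(E,\cF) = \phi_{K_\cF}(v)$: since $\phi_{-K_\cF}$ is the support function with $\phi_{-K_\cF}(v_\rho) = [\rho \subseteq W]\cdot(-1)$, and $v$ lies in a cone $\sigma$ with generators $v_{\rho_1},\dots,v_{\rho_k}$, writing $v = \sum a_i v_{\rho_i}$ with $a_i \geq 0$ gives $\phi_{K_\cF}(v) = \sum a_i [\rho_i \subseteq W]$. To force this to be $0$ I want all $\rho_i$ occurring with $a_i > 0$ to satisfy $\rho_i \not\subseteq W$.

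This is the crux, and the main obstacle: I must produce a primitive $v \in W \cap N$ lying in the interior of a cone $\sigma$ all of whose rays meeting $v$'s support lie outside $W$, yet $v$ itself lies in $W$. Geometrically $W \cap N_\bR$ is a rational (after noting $W$ might be irrational — but $W\cap N$ still spans a rational subspace; replace $W$ by $\bC(W\cap N)$ for this purpose, noting invariance of a torus divisor depends only on whether $\rho \subseteq W$, and rays are rational so $\rho \subseteq W \iff \rho \subseteq \bC(W\cap N)$) subspace $W_0$ of dimension $\geq 1$; its intersection with the fan $\Sigma$ is itself a fan supported on $W_0 \cap |\Sigma| = W_0$ (completeness). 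A primitive vector $v$ generating a ray of this induced fan that is \emph{not} a ray of $\Sigma$ works: $v \in W_0 \cap N$, $v$ is primitive, and $v$ lies in the interior of a cone $\sigma \in \Sigma$ of dimension $> 1$; moreover $v \in W_0 \subseteq W$. Does such a ray exist? If every ray of the induced fan were already a ray of $\Sigma$, then $W_0$ would be a union of rays of $\Sigma$ and their spans — in the $\dim W_0 = 1$ case this says $\rho_0$ and $-\rho_0$ are both rays of $\Sigma$; we can arrange failure by first doing one star subdivision. Concretely: if $-K_\cF$ is ample then $\sum_{\rho \subseteq W} D_\rho$ is ample, so in particular by the ampleness criterion applied to any primitive collection $P$, the strict-convexity inequality holds; I'll use this to locate a lattice point $v \in W\cap N$ with $\phi_{K_\cF}(v) = 0$ but $v$ in the interior of a cone, e.g.\ take $P = \{\rho\}$-type degenerations. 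After extracting $E$ via the star subdivision of $\Sigma$ at $v$, Proposition~\ref{prop:adjoint_log_discrepancy} gives $a(E, \cF, 0, 1) = \phi_{K_\cF}(v) = 0$, and $E$ is non-invariant because $v \in W$. I expect the delicate point to be the existence of such a $v$ — this will require a short argument combining completeness of $\Sigma$ (so $W_0 \cap |\Sigma| = W_0$) with a subdivision to escape the degenerate situation where $W_0$ is spanned by existing rays, plus the ampleness of $-K_\cF$ to guarantee $\phi_{K_\cF}$ genuinely vanishes (rather than being negative) at an interior lattice point of $W_0$.
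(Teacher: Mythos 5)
You correctly identify the target: a primitive $v\in W\cap N$ that is not on a ray of $\Sigma$, lies in the relative interior of some cone $\sigma$, and whose expansion $v=\sum a_i v_{\rho_i}$ in terms of the rays of $\sigma$ involves only rays with $\rho_i\nsubseteq W$, so that $\phi_{K_\cF}(v)=0$ while the exceptional divisor $D_v$ is non-invariant because $v\in W$. But you never produce such a $v$, and you acknowledge this explicitly (\emph{``I expect the delicate point to be the existence of such a $v$''}). This is exactly where the argument lives, so the proof has a genuine gap.

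The missing idea is to construct $v$ as a positive rational combination of the \emph{non-invariant} ray generators, not to search for it directly inside $W$. Set $W'=\operatorname{span}_\bQ\{\rho\in\Sigma(1)\mid\rho\subseteq W\}$ and $S=\{\rho\in\Sigma(1)\mid\rho\nsubseteq W\}$. Completeness of $\Sigma$ forces the images of $\{v_\rho\}_{\rho\in S}$ to positively span $N_\bQ/W'$, which has dimension $\geq 1$ since $\cF\neq\cT_X$; a positive spanning set of a nonzero space is linearly dependent with positive coefficients, yielding $a_\rho\in\bQ_{>0}$ with $v_0:=\sum_{\rho\in S}a_\rho v_\rho\in W'\subseteq W$. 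Now the condition you could not verify -- that the $\rho$ appearing with positive weight all lie outside $W$ -- is \emph{automatic by construction}. Finally, $\phi:=\phi_{-K_\cF}$ satisfies $\phi\leq 0$ and is strictly convex (ampleness), and $\phi(v_\rho)=0$ for $\rho\in S$, so the sandwich $0\geq\phi(v_0)\geq\sum a_\rho\phi(v_\rho)=0$ both gives $\phi(v_0)=0$ (hence foliated log discrepancy zero) and forces equality in strict convexity, which places all $\{v_\rho\}_{\rho\in S}$ in a single simplicial cone and guarantees $\bR_{\geq 0}v_0\notin\Sigma(1)$. Your proposal also suggests ``fixing'' the degenerate case by a preliminary star subdivision; this is unnecessary once $v_0$ is built as above, and it obscures the fact that ampleness of $-K_\cF$ is what yields $\phi(v_0)=0$, not a subdivision.
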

\begin{proof}
    Let $W'=\operatorname{span}_\bQ\{\rho\in\Sigma(1)\mid\rho\subseteq W\}$ and $\phi\colon N_\bR\to\bR$ be the support function of $-K_\cF$. 
    Note that $\phi(u)\leq 0$ for all $u\in N_\bR$. 
    Moreover, $\phi$ is strictly convex with respect to $\Sigma$ as $-K_\cF$ is ample. 

    Since $\cF_W$ is Fano, we have $\cF_W\neq 0$ and thus there is a ray $\rho\in\Sigma(1)$ such that $\rho\subseteq W$. 
    Hence $r':=\dim_\bQ W'\geq 1$. 
    As $\cF\neq\cT_X$, we have $r'\leq\dim_\bC W < n$. 
    Let $\pi\colon N_\bQ \to N_\bQ/W'$ be the canonical linear projection. 
    Note that $|\Sigma|=N_\bR$ as $X_\Sigma$ is complete. 
    Since $r'<n$, we have $\dim_\bQ N_\bQ/W' = n-r'\geq 1$ and thus the set 
    \[S:=\{\rho\in\Sigma(1)\mid \pi(v_\rho)\neq 0\} = \{\rho\in\Sigma(1)\mid\rho\nsubseteq W\}\]
    is not empty. 
    Then $N_\bQ/W' = \sum_{\rho\in S} \bQ_{\geq 0}\pi(v_\rho)$ where $v_\rho$ is the primitive generator of $\rho$. 
    Hence $\# S\geq\dim_\bQ N_\bQ/W'+1\geq 2$. 
    Therefore, there are $a_\rho\in\bQ_{>0}$ for $\rho\in S$ such that $\sum_{\rho\in S} a_\rho\pi(v_\rho)=0$. 
    Let $v_0:=\sum_{\rho\in S} a_\rho v_\rho\in N_\bQ$. 
    Since $\pi(v_0)=0$, we have $v_0\in W'\cap N_\bQ\subseteq W\cap N_\bQ$. 
    

    Moreover, we note that 
    \[0 \geq\phi(v_0) = \phi(\sum_{\rho\in S} a_\rho v_\rho) \geq \sum_{\rho\in S} a_\rho\phi(v_\rho) = 0\]
    where the first inequality holds since $\phi\leq 0$, the second inequality holds because of the strict convexity of $\phi$, and the last equality follows as $\rho\nsubseteq W$. 
    Hence, we have $\phi(\sum_{\rho\in S} a_\rho v_\rho) = \sum_{\rho\in S} a_\rho\phi(v_\rho)$ and therefore, all $v_\rho$, for $\rho\in S$, are in the same maximal cone because $\phi$ is strictly convex. 
    Since all cones in $\Sigma$ are strictly convex, $v_0=\sum_{\rho\in S} a_\rho v_\rho\neq 0$ and the ray generated by $v_0$ is not in $\Sigma(1)$. 
    So the star subdivision along $v_0$ introduces an exceptional divisor which is not foliation-invariant and has log discrepancy zero. 
\end{proof}

\begin{proposition}\label{prop:delta_lc_imply_TX}
    Let $\cF_W$ be a Fano toric foliation on a complete $\bQ$-factorial toric variety $X_\Sigma$. 
    If the adjoint foliated structure $(X,\cF,1)$ is $\delta$-lc for some $\delta>0$, then $\cF=\cT_X$. 
\end{proposition}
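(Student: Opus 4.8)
The plan is to argue by contradiction using Lemma~\ref{lem:Fano_fol_exc_ld_zero}, so suppose $\cF=\cF_W\neq\cT_X$. First note that since $X_\Sigma$ is $\bQ$-factorial, $K_\cF$ is $\bQ$-Cartier, so $(X,\cF,1)=(X,\cF,0,1)$ is indeed an adjoint foliated structure and the hypothesis of $\delta$-log canonicity is meaningful.

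Next I would unwind the meaning of $\delta$-lc in the case $t=1$, $\Delta=0$. For a birational morphism $\pi\colon X'\to X$ with $\cF'=\pi^{-1}\cF$, the divisor $\Delta'$ is determined by $K_{\cF'}+\Delta'=\pi^*K_\cF$, so $-\on{mult}_E\Delta'=\on{mult}_E(K_{\cF'}-\pi^*K_\cF)$ is exactly the classical foliated discrepancy of $E$, and hence $a(E,\cF,0,1)=-\on{mult}_E\Delta'+\iota(E)$ is exactly the classical foliated log discrepancy of $E$. The defining inequality of $\delta$-log canonicity, $a(E,\cF,0,1)\geq(1\cdot\iota(E)+0)\,\delta=\iota(E)\,\delta$, therefore says in particular that every non-$\cF$-invariant divisor $E$ over $X$ has foliated log discrepancy $\geq\delta>0$.

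Finally I would invoke Lemma~\ref{lem:Fano_fol_exc_ld_zero}: since $\cF$ is a Fano toric foliation on a complete $\bQ$-factorial toric variety with $\cF\neq\cT_X$, there is a non-$\cF$-invariant exceptional divisor $E$ over $X_\Sigma$ with foliated log discrepancy $0$. As $\iota(E)=1$, this contradicts the previous paragraph, which demands foliated log discrepancy $\geq\delta>0$. Hence $\cF=\cT_X$.

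There is essentially no obstacle here once Lemma~\ref{lem:Fano_fol_exc_ld_zero} is available; the only thing to be careful about is the bookkeeping identification of $a(\,\cdot\,,\cF,0,1)$ with the classical foliated log discrepancy and the observation that, at $t=1$, the $\delta$-lc inequality constrains only the non-invariant divisors through $\delta$. Should one wish to avoid citing the statement of that lemma, the same contradiction can be produced directly from its construction together with Lemma~\ref{lem:delta_lc_combinatorial}: the vector $v_0=\sum_{\rho\nsubseteq W}a_\rho v_\rho$ built there lies in $W$, spans a ray not in $\Sigma(1)$, and has $\phi_{-K_\cF}(v_0)=0$; passing to the primitive generator $v_0'$ of that ray gives a primitive $v_0'\in W\cap|\Sigma|$ with $\phi_{K_{(X,\cF,0,1)}}(v_0')=-\phi_{-K_\cF}(v_0')=0<\delta$, which violates the combinatorial criterion of $\delta$-log canonicity.
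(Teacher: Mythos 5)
Your proposal is correct and follows the same route as the paper: assume $\cF\neq\cT_X$, invoke Lemma~\ref{lem:Fano_fol_exc_ld_zero} to produce a non-invariant exceptional divisor $E$ with foliated log discrepancy zero, identify $a(E,\cF,0,1)$ with that foliated log discrepancy, and conclude that the $\delta$-lc inequality $a(E,\cF,0,1)\geq\iota(E)\delta=\delta>0$ is violated. The extra care you take in unpacking the $t=1$ specialization of the adjoint log discrepancy is a useful expansion of the paper's terse one-line identification $a(E,X,\cF,1)=a(E,\cF)$.
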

\begin{proof}
    Suppose $\cF\neq\cT_X$, then by Lemma~\ref{lem:Fano_fol_exc_ld_zero}, there is a non-foliation-invariant exceptional divisor $E$ such that $a(E,\cF)=0$. 
    Then the adjoint log discrepancy $a(E,X,\cF,1) = a(E,\cF)=0$ and thus $(X,\cF,1)$ is not $\delta$-lc, which is a contradiction. 
    Therefore, $\cF=\cT_X$. 
\end{proof}

\begin{theorem}[Main Theorem]\label{thm:bounded_adjoint_Fano}
    Fix a positive integer $n$, a non-negative integer $r\leq n$, a positive real number $\delta$, and two real numbers $t_i\in [0,1)$ for $i=1$, $2$. 
    Let $\cF=\cF_W$ be a toric foliation of rank $r$ on a complete toric variety $X=X_\Sigma$ of dimension $n$, $\Delta$ be an effective torus-invariant divisor on $X_\Sigma$, and $K_{t_i} = t_iK_\cF+(1-t_i)K_X+\Delta$ for $i=1$, $2$. 
    Suppose $-K_{t_1}$ is ample and the adjoint foliated structure $(X,\cF,\Delta,t_2)$ is $\delta$-lc. 
    Then we have the following:
    \begin{enumerate}
        \item There is an effective divisor $\Delta'$ such that $K_X+\Delta'$ is $\bR$-Cartier. 
        \item $(X,\Delta')$ is $(1-t_2)\delta$-lc. In particular, $X$ is potentially klt. 
        \item The number of isomorphism classes of such $X$ is finite, which depends only on $t$, $\delta$, and $n$. 
        \item The set consisting of $(X,\cF)$ is bounded. 
    \end{enumerate}
\end{theorem}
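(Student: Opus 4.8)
The plan is to convert the two hypotheses into statements about the fan $\Sigma$ and then feed the outcome into boundedness of log Fano toric varieties. Throughout write $\Delta=\sum_{\rho\in\Sigma(1)}a_\rho D_\rho$ with $a_\rho\ge 0$, and use $K_\cF\sim-\sum_{\rho\subseteq W}D_\rho$ together with $K_X\sim-\sum_\rho D_\rho$ to rewrite, for $i=1,2$, $K_{t_i}=K_X+\Delta_i$ where $\Delta_i:=\Delta+t_i\sum_{\rho\nsubseteq W}D_\rho\ge 0$.

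\emph{Parts (1) and (2).} Take $\Delta'=\Delta_2$. Then $K_X+\Delta'=K_{t_2}=K_{(X,\cF,\Delta,t_2)}$ is $\bR$-Cartier, which is (1). For (2) I would feed Lemma~\ref{lem:delta_lc_combinatorial} — which gives $\phi_{K_{t_2}}(v)\ge(1-t_2)\delta$ for every primitive $v\in|\Sigma|$ (the case $v\in W$ even yields $\ge\delta$) — into the non-foliated analogue of Proposition~\ref{prop:adjoint_log_discrepancy}: for a toric valuation $D_v$ over $X$ one has $a(D_v,X,\Delta')=\phi_{K_X+\Delta'}(v)=\phi_{K_{t_2}}(v)\ge(1-t_2)\delta$. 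Since for a toric pair with torus-invariant boundary it is enough to test toric valuations (the non-foliated case of the toroidal criterion established in the appendix), $(X,\Delta')$ is $(1-t_2)\delta$-lc. Evaluating the criterion at the $v_\rho$ gives $a_\rho\le 1-\delta$ when $\rho\subseteq W$ and $a_\rho\le(1-t_2)(1-\delta)$ otherwise, so every coefficient of $\Delta'$ (indeed of each $\Delta_i$) is $<1$; hence $\lfloor\Delta'\rfloor=0$, and as $(1-t_2)\delta>0$ the pair $(X,\Delta')$ is klt, i.e.\ $X$ is potentially klt.

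\emph{Parts (3) and (4).} The cleanest case is $t_1\le t_2$ (in particular $t_1=t_2$): then $\Delta_1\le\Delta'$, so $(X,\Delta_1)$ is still $(1-t_2)\delta$-lc with all coefficients $<1$, while $-(K_X+\Delta_1)=-K_{t_1}$ is ample; thus $(X,\Delta_1)$ is a $\delta_0$-lc log Fano pair of dimension $n$ with $\delta_0=(1-t_2)\delta$. I would then run the Borisov--Borisov argument with $(X,\Delta_1)$ in place of a Fano toric variety: ampleness makes $\phi_{K_{t_1}}$ strictly convex as a function and, in this case, positive off the origin (its values at the $v_\rho$ are bounded below by $(1-t_2)\delta>0$), so $P:=\on{conv}(\{0\}\cup\{v_\rho\})$ is a lattice polytope with $0$ in its interior contained in $Q:=\{v:\phi_{K_{t_1}}(v)\le 1\}$; $\delta_0$-log canonicity says $\phi_{K_{t_1}}(v)=a(D_v,X,\Delta_1)\ge\delta_0$ for all primitive $v$, so $\delta_0\cdot\on{int}(Q)$ meets $N$ only in $0$, whence Borisov's volume bound controls $\on{vol}(Q)\ge\on{vol}(P)$ in terms of $n$ and $\delta_0$. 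Lattice polytopes of bounded volume containing $0$ in the interior, and their lattice points, fall into finitely many $\on{GL}(N)$-classes, so there are finitely many possibilities for $\{v_\rho\}$, for $\Sigma$, and for $X$ — this is (3), depending only on $n,\delta,t_1,t_2$; equivalently one may quote Birkar's solution of the BAB conjecture for the pair $(X,\Delta_1)$. For (4), once $X$ lies in a finite set of toric varieties the foliation $\cF=\cF_W$ is a point of the Grassmannian $\on{Gr}(r,N_\bC)$, a fixed projective variety for each $X$, so spreading the finitely many tautological families of toric foliations over these Grassmannians realizes $\{(X,\cF)\}$ as a bounded family.

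\emph{Main obstacle.} The real difficulty is the case $t_1>t_2$: $\delta$-log canonicity is imposed at $t_2$ but positivity at $t_1$, and the log Fano boundary $\Delta_1$ is the one attached to $t_1$, so for $t_1$ large its coefficients may reach $1$ (and $Q$ may be unbounded), so $(X,\Delta_1)$ is no longer visibly a klt log Fano pair. Reconciling the parameters requires transporting the bounds via the identity $\phi_{K_{t_1}}=\phi_{K_{t_2}}+(t_1-t_2)\,\phi_{E}$ with $E=\sum_{\rho\nsubseteq W}D_\rho$ (support functions being linear in the divisor), together with the lower bound $\phi_{K_{t_2}}\ge(1-t_2)\delta$ and the coefficient bounds on $\Delta$; it is convenient to pass first to a projective $\bQ$-factorialisation of $X$ (which leaves $\cF$, $\Delta$, and the $\delta$-lc hypothesis untouched while turning $-K_{t_1}$ into a nef and big divisor) and then invoke the general form of the BAB conjecture. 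Extracting from this comparison a single positive $\delta_0=\delta_0(n,\delta,t_1,t_2)$ — and hence a uniform $\epsilon$-lc log Fano structure — is where I expect the bulk of the technical work to lie.
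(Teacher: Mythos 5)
Your choice of $\Delta'=K_{t_2}-K_X$ in parts (1) and (2) and the use of Lemma~\ref{lem:delta_lc_combinatorial} to get $\phi_{K_X+\Delta'}(v)=\phi_{K_{t_2}}(v)\ge(1-t_2)\delta$ matches the paper's proof exactly. Part (4), parametrizing $\cF_W$ by $\on{Gr}(r,n)$, is also identical. For part (3) you invoke Birkar's BAB for log Fano pairs, whereas the paper stays entirely toric and applies Borisov's lattice-polytope finiteness result \cite[Proposition 5]{borisov1993singular} to $P=\on{conv}(v_\rho)$; this toric route is more elementary and avoids the $\bQ$-factorialization step you introduce, but both roads lead to the same place once one has a uniform positive lower bound on the coefficients of $-K_{t_1}$.

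The issue is precisely the one you flag and then leave open: the case $t_1>t_2$. You do not close it, so as written the proposal has a genuine gap in part (3). But your worry is well-founded, and it is worth saying why concretely. The paper asserts $-\on{mult}_{D_\rho}K_{t_1}\ge\lambda:=\min\{(1-t_1)+(1-t_2)\delta,\,(1-t_2)(1+\delta)\}$, deduced from $\on{mult}_{D_\rho}\Delta\le t_2\iota(\rho)+(1-t_2)(1-\delta)$. But substituting gives
\[
-\on{mult}_{D_\rho}K_{t_1}\ \ge\ (t_1-t_2)\iota(\rho)+(1-t_1)-(1-t_2)(1-\delta),
\]
which for a non-invariant ray $(\iota(\rho)=0)$ equals $(1-t_1)-(1-t_2)(1-\delta)=(t_2-t_1)+(1-t_2)\delta$, and this is negative exactly when $t_1-t_2>(1-t_2)\delta$. (Indeed the paper's claimed bound $(1-t_1)+(1-t_2)\delta$ already exceeds the trivial upper bound $1-t_1$ coming from $a_\rho\ge 0$, so it cannot be right as stated.) Thus for $t_1\le t_2$ the coefficients of $-K_{t_1}$ are uniformly $\ge(1-t_2)\delta>0$ and your BAB/Borisov argument runs; for $t_1-t_2>(1-t_2)\delta$ neither your sketch nor the paper's inequality as written produces the needed positive $\lambda$, and one must use the ampleness of $-K_{t_1}$ more seriously (e.g.\ via the dual polytope of $-K_{t_1}$ combined with the $(1-t_2)\delta$-lc bound $\phi_{K_{t_2}}(v)\ge(1-t_2)\delta$ to still control the volume of $P$) — this is the step that needs to be supplied.
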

\begin{proof}
    \begin{enumerate}
        \item We recall that $K_X=-\sum_{\rho\in\Sigma(1)}D_\rho$. 
        We write $K_{t_2}=-\sum_{\rho\in\Sigma(1)}c_\rho D_\rho$ where $c_\rho\in [0,1]$. 
        Let $\Delta'=K_{t_2}-K_X$. 
        Note that $\Delta'$ is effective as $1-c_\rho\geq 0$. 

        \item Note that $\phi_{K_X+\Delta'}(v) = \phi_K(v)\geq \phi_{K_{t_2}}(v)\geq (1-t_2)\delta$ for all $v\in N$ and thus $(X,\Delta')$ is $(1-t_2)\delta$-lc. 

        \item Let $v_\rho$ be the primitive generator of $\rho$ for $\rho\in\Sigma(1)$. 
    Let $P=\operatorname{conv}(v_\rho\mid \rho\in\Sigma(1))$ and $\widetilde{P} = \operatorname{conv}((-\on{mult}_{D_\rho}K_{t_1})v_\rho\mid\rho\in\Sigma(1))\subseteq \operatorname{conv}((1-t_1)v_\rho, v_{\rho'}\mid \rho\not\subseteq W, \rho'\subseteq W)$. 

    We define $\iota(\rho)=1$ if $\rho\subseteq W$ and $\iota(\rho)=0$ otherwise. 

    Since $(X,\cF,\Delta,t_2)$ is $\delta$-lc, by Lemma~\ref{lem:delta_lc_combinatorial}, we have 
    \begin{align*}
        t_2\iota(\rho)+(1-t_2)-\on{mult}_{D_\rho}\Delta 
        &=-\on{mult}_{D_\rho}K_{t_2} \\
        &=\phi_{K_{t_2}}(v_\rho) \\
        &\geq (1-t_2)\delta
    \end{align*}
    and thus, $\on{mult}_{D_\rho}\Delta\leq t_2\iota(\rho)-(1-t_2)(1-\delta)$. 
    Hence, 
    \begin{align*}
        -\on{mult}_{D_\rho}K_{t_1} &= t_1\iota(\rho)+(1-t_1)-\on{mult}_{D_\rho}\Delta \\
        &\geq (t_1-t_2)\iota(\rho)+(1-t_1)+(1-t_2)\delta \\
        &\geq \min\{(1-t_1)+(1-t_2)\delta, (1-t_2)(1+\delta)\}=:\lambda. 
    \end{align*}
    Therefore, $\lambda P\subseteq\widetilde{P}$ as $0\in P\cup \widetilde{P}$. 
    For each maximal cone $\sigma\in\Sigma$, we define $P^\sigma = \operatorname{conv}(0,(-\on{mult}_{D_\rho}K_{t_1})v_\rho\mid\rho\in\sigma(1))$ and thus 
    \[\lambda P\cap\sigma\subseteq \widetilde{P}\cap\sigma\subseteq P^\sigma\] 
    where the second inclusion follows from ampleness of $-K_{t_1}$. 
    Since $(X,\Delta')$ is $(1-t_2)\delta$-lc, we have that $\lambda(1-t_2)\delta P^\sigma$ contains no lattice point other than the origin for any $\sigma\in\Sigma(n)$. 
    Thus 
    \[\lambda(1-t_1)(1-t_2)\delta P\cap \sigma = \lambda(1-t_2)\delta((1-t_1)P\cap\sigma)\subseteq \lambda(1-t_2)\delta P^\sigma\] contains no lattice point other than the origin for any $\sigma\in\Sigma(n)$. 
    Hence, $\lambda(1-t_1)(1-t_2)\delta P$ contains no lattice point other than the origin. 
    Therefore, by \cite[Proposition 5]{borisov1993singular}, there are finitely many $P$ up to the action of $\on{GL}_n(\bZ)$.  
    
    Note that $P$ contains all primitive generators of $\rho$ for $\rho\in\Sigma(1)$. 
    So there are only finitely many possible $\Sigma(1)$ and thus there are only finitely many possible $\Sigma$. 

    \item For each $X$, the toric foliation on $X$ is parametrized by $\on{Gr}(r,n)$. Since there are only finitely many such $X$, the set consisting of $(X,\cF)$ is bounded. 
    \end{enumerate}
\end{proof}



\section{Dicritical locus}
In this section, we study the locus where $\cF$ is dicritical and show that it is connected when $\cF$ is Fano. 
\begin{definition}
    Let $\cF$ be a foliation of rank $r$ on a normal variety $X$ of dimension $n$. 
    We define the \emph{dicritical locus}, $\on{dicrit}(\cF)$ to be the union of subvarieties $Z$ of dimension at most $n-r-1$ such that there is a non-$\cG$-invariant divisor $E$ over $Z$ where $\cG$ is the transformed foliation. 
\end{definition}

\begin{definition}
    $(\tau,W)$ is dicritical if $\on{Relint}(\tau)\cap W\cap N\neq\emptyset$ and $\tau\nsubseteq W$. 
\end{definition}

\begin{lemma}\label{lem:Fano_supp_fcn_zero_locus}
    Let $\Sigma$ be a complete simplicial fan in $N_\bR$, and let $\phi\colon N_\bR\to\bR$ be a non-positive function which is strictly convex and piecewise linear with respect to $\Sigma$. 
    Then there exists a cone $\tau_0\in\Sigma$ such that $\phi(v)=0$ if and only if $v\in\tau_0$. 
\end{lemma}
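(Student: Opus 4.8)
The plan is to analyze the zero set $Z := \phi^{-1}(0) \subseteq N_\bR$ and to show it equals a single cone of $\Sigma$. First I would record that, since $\phi$ is linear on each $\sigma \in \Sigma$, say $\phi|_\sigma = \langle m_\sigma, \cdot \rangle$ with $m_\sigma \in M_\bR$, the hypothesis $\phi \le 0$ forces $\langle m_\sigma, \cdot \rangle \le 0$ on $\sigma$, so $Z \cap \sigma = \sigma \cap m_\sigma^{\perp}$ is a face of $\sigma$. Consequently, for any $v \in Z$, taking $\sigma$ to be the cone of $\Sigma$ with $v \in \on{Relint}(\sigma)$, the face $Z \cap \sigma$ of $\sigma$ meets $\on{Relint}(\sigma)$ and hence equals $\sigma$; thus $\sigma \subseteq Z$. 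So $Z$ is a union of cones of $\Sigma$, and every point of $Z$ lies in the relative interior of such a cone.

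The key step, and the only place strict convexity enters, is the claim that any two cones $\tau_1, \tau_2 \in \Sigma$ with $\tau_1, \tau_2 \subseteq Z$ are faces of a common cone of $\Sigma$. Pick $u \in \on{Relint}(\tau_1)$ and $v \in \on{Relint}(\tau_2)$. If no cone of $\Sigma$ had both $\tau_1$ and $\tau_2$ as faces, then $u$ and $v$ would lie in no common cone of $\Sigma$: were $u, v \in \sigma \in \Sigma$, then $\tau_1 \cap \sigma$ would be a face of $\tau_1$ containing the relative interior point $u$, hence $\tau_1 \cap \sigma = \tau_1$ and $\tau_1 \preceq \sigma$, and likewise $\tau_2 \preceq \sigma$, a contradiction. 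Since $\phi(u) = \phi(v) = 0$, strict convexity would then give $\phi(u+v) > \phi(u) + \phi(v) = 0$, contradicting $\phi \le 0$. I expect this claim to be the crux of the argument; the remaining ingredients are routine fan combinatorics, the only delicate point being the standard fact just used that a cone of $\Sigma$ containing a relative interior point of $\tau_i$ must have $\tau_i$ as a face.

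To conclude, let $\tau_0$ be a cone of $\Sigma$ that is maximal, for the face relation, among the cones of $\Sigma$ contained in $Z$; such $\tau_0$ exists because $\Sigma$ is finite and the zero cone lies in $Z$. Given any $v \in Z$, the first paragraph produces $\sigma \in \Sigma$ with $v \in \sigma \subseteq Z$. By the key step, $\tau_0$ and $\sigma$ are both faces of some $\sigma' \in \Sigma$, so $Z \cap \sigma'$ is a face of $\sigma'$ that is contained in $Z$ and contains both $\tau_0$ and $\sigma$; maximality of $\tau_0$ then forces $Z \cap \sigma' = \tau_0$, whence $v \in \sigma \subseteq \tau_0$. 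Therefore $Z \subseteq \tau_0$, and since $\tau_0 \subseteq Z$ by construction, we conclude $Z = \tau_0$, which is exactly the statement of the lemma.
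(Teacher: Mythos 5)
Your proof is correct, and at its core it follows the same route as the paper's: observe that the zero set $Z=\phi^{-1}(0)$ is a union of cones of $\Sigma$, fix a cone $\tau_0$ maximal among the cones of $\Sigma$ contained in $Z$, and use strict convexity to show that $Z=\tau_0$. The one genuine difference is that your argument never invokes simpliciality. In the paper's proof, given a purported zero $v\notin\tau_0$ with $v\in\on{Relint}(\sigma)$, one notes that $\on{Cone}(\sigma,\tau_0)\notin\Sigma$ by maximality and then applies strict convexity to $u\in\on{Relint}(\tau_0)$ and $v$; but to justify the strict inequality $\phi(u+v)>\phi(u)+\phi(v)$ one must know that $u$ and $v$ lie in no common cone of $\Sigma$, and deducing this from $\on{Cone}(\sigma,\tau_0)\notin\Sigma$ uses that in a simplicial fan any two faces of a common cone together generate a face. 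Your key step instead shows directly that if $u$ and $v$ lay in a common cone $\sigma'$, then $\tau_0\preceq\sigma'$ and $\sigma\preceq\sigma'$, whence $Z\cap\sigma'$ is a face of $\sigma'$ contained in $Z$ that strictly contains $\tau_0$, contradicting maximality---no appeal to $\sigma'$ being simplicial is needed. The upshot is that your argument establishes the lemma for an arbitrary complete fan, a small but real strengthening; the simpliciality hypothesis in the statement is used elsewhere in that section of the paper, but your proof shows it is not needed for this particular lemma.
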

\begin{proof}
    Note that if there exists $v\in\on{Relint}(\tau)$ for some $\tau\in\Sigma$ such that $\phi(v)=0$, then $\phi\vert_\tau=0$ by the linearity of $\phi$. 
    Let $S = \{\tau\in\Sigma :\phi\vert_\tau=0\}$. 
    Note that $S$ is not empty as the zero cone $\{0\}$ belongs to $S$. 
    Then by Zorn's lemma, we have a cone $\tau_0\in\Sigma$, a maximal element in $S$ with respect to the inclusion. 

    Suppose there is an element $v$ such that $\phi(v)=0$ but $v\notin\tau_0$. 
    Then $v\in\on{Relint}(\sigma)$ for some $\sigma\in\Sigma$. 
    So $\sigma\in S$. 
    Since $\tau_0$ is maximal, $\on{Cone}(\sigma,\tau_0)\notin\Sigma$. 
    Fix en element $u\in\on{Relint}(\tau_0)$. 
    Note that $0\geq\phi(u+v)>\phi(u)+\phi(v)=0$, which is impossible. 
\end{proof}

\begin{lemma}\label{lem:dicrit_locus}
    Let $\cF_W$ be a toric foliation on a toric variety $X_\Sigma$ of dimension $n$. 
    Then 
    \[\on{dicrit}(\mc F_W)=\begin{cases}
        \bigcup_{\tau\in\Sigma,\,(\tau,W)\ \textnormal{is dicritical}}\mc O_\tau & \mbox{if } \on{rank}\mc F\geq n-1 \\
        \bigcup_{\tau\in\Sigma,\,(\tau,W)\ \textnormal{is dicritical}}V_\tau & \mbox{if } \on{rank}\mc F< n-1.
    \end{cases}\]
    We use the convention that the union over an empty index set is an empty set. 
\end{lemma}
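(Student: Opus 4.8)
The plan is to describe the dicritical locus of $\mathcal F_W$ on $X_\Sigma$ explicitly by analyzing, cone by cone, which subvarieties $V_\tau$ admit a non-invariant divisor over them. First I would recall the definition: $\operatorname{dicrit}(\mathcal F_W)$ is the union of subvarieties $Z$ of dimension at most $n-r-1$ that are dominated by a non-$\mathcal G$-invariant divisor $E$ (where $\mathcal G$ is the transformed foliation on the model carrying $E$). Since every subvariety appearing is a union of torus orbits and the question is torus-equivariant, it suffices to decide for each cone $\tau\in\Sigma$ whether $V_\tau$ (equivalently $\mathcal O_\tau$) lies in $\operatorname{dicrit}(\mathcal F_W)$; here $\dim V_\tau = n - \dim\tau$, so the dimension constraint $\dim V_\tau \le n-r-1$ becomes $\dim\tau \ge r+1$ when $\operatorname{rank}\mathcal F = r < n-1$, while when $r \ge n-1$ the relevant orbits are points, coming from cones $\tau$ with $\dim\tau = n$ (if $r = n-1$) — this explains the split in the two cases of the statement.

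The heart of the argument is the claim: $V_\tau$ (or $\mathcal O_\tau$) lies in $\operatorname{dicrit}(\mathcal F_W)$ precisely when $(\tau, W)$ is dicritical, i.e. $\operatorname{Relint}(\tau)\cap W\cap N\neq\emptyset$ and $\tau\nsubseteq W$. For the "if" direction, I would perform a star subdivision of $\Sigma$ along a primitive lattice point $v_0\in\operatorname{Relint}(\tau)\cap W\cap N$; this introduces an exceptional divisor $D_{\rho_0}$ with $\rho_0 = \mathbb R_{\ge 0}v_0 \subseteq W$, which by \cite[Corollary 3.3]{CC} is non-$\mathcal F_W$-invariant, and whose center on $X_\Sigma$ is $V_\tau$, which has the correct dimension since $\dim\tau\ge r+1$ — here the hypothesis $\tau\nsubseteq W$ is what keeps $\dim\tau$ large enough relative to $\dim W$ (note $\dim_{\mathbb Q}(W\cap N_{\mathbb Q})$ may be smaller than $r = \dim_{\mathbb C}W$, so one must be slightly careful, but $v_0\in W\cap N$ together with $\tau\nsubseteq W$ forces $\dim\tau\ge r+1$ after checking that the rational span of $\tau$ cannot sit inside $W$). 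For the "only if" direction, suppose some non-invariant divisor $E$ on a further toric model $X_{\Sigma'}$ has center $Z\subseteq V_\tau$ with $\dim Z\le n-r-1$; realizing $E$ as $D_{\rho'}$ for a ray $\rho'\in\Sigma'(1)\setminus\Sigma(1)$, non-invariance gives $\rho'\subseteq W$, so the primitive generator $v'$ of $\rho'$ lies in $W\cap N$; since $Z\subseteq V_\tau$ the ray $\rho'$ lies in the relative interior of some cone $\sigma\in\Sigma$ with $\tau\preceq\sigma$ and $\dim\sigma$ large (so that $\dim V_\sigma = n-\dim\sigma$ is small), and one checks $(\sigma, W)$ is dicritical; then a combinatorial argument — shrinking $\sigma$ to a minimal face still containing a lattice point of $W$ in its relative interior and still not contained in $W$ — produces a face $\tau'$ with $\tau\preceq\tau'\preceq\sigma$, $(\tau',W)$ dicritical, and $V_{\tau'}\subseteq V_\tau$, which suffices since $\mathcal O_{\tau'}$ (resp. $V_{\tau'}$) is then in the right-hand side.

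The main obstacle I anticipate is the bookkeeping in the "only if" direction: passing from an arbitrary non-invariant divisor over $Z$ on an arbitrary toric refinement back to a single cone $\tau$ of the original fan with $(\tau,W)$ dicritical, while tracking the dimension bound $\dim Z\le n-r-1$ and making sure the face of $\Sigma$ one lands on is exactly one whose orbit closure is contained in $V_\tau$. A secondary subtlety is the boundary case $\operatorname{rank}\mathcal F\ge n-1$, where only orbit points $\mathcal O_\tau$ (not the closures $V_\tau$) should appear — one must verify that when $\dim\tau = n$ with $(\tau,W)$ dicritical, the exceptional divisor from the star subdivision along $v_0\in\operatorname{Relint}(\tau)\cap W\cap N$ has center exactly the point $\mathcal O_\tau$, and that no lower-dimensional-codimension cone contributes, so that the stated formula (a union of orbits rather than orbit closures) is correct. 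I would also invoke Lemma~\ref{lem:Fano_supp_fcn_zero_locus} or the surrounding toric dictionary only where a convexity/linearity fact about support functions is genuinely needed; for this particular lemma the argument is essentially combinatorial and does not require the Fano hypothesis.
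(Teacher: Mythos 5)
Your proposal has several genuine gaps, and they all trace back to the same omission: the definition of $\operatorname{dicrit}(\mathcal F)$ quantifies over \emph{all} divisors $E$ over $X$, not just torus-invariant ones, and it is precisely to handle arbitrary (non-toric) divisors and arbitrary (non-torus-invariant) centers that the paper explicitly defers this lemma to the appendix on toroidal foliations. Your plan of working ``cone by cone'' and realizing $E$ as $D_{\rho'}$ on a toric refinement $\Sigma'$ cannot, as stated, prove the ``only if'' direction: a non-invariant divisor witnessing dicriticality need not appear on any toric model of $X_\Sigma$, and the paper's inclusion $\subseteq$ in both rank regimes relies on the moreover part of \cite[Lemma 4.15]{CC} about semi-local (formal, not Zariski) models precisely to rule such divisors out.

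More concretely, your claim that ``$v_0\in W\cap N$ together with $\tau\nsubseteq W$ forces $\dim\tau\ge r+1$'' is false. For example with $n=4$, $r=2$, $W=\mathbb C e_1+\mathbb C e_2$, and $\tau=\operatorname{Cone}(e_1+e_3,\,e_1-e_3)$, one has $2e_1\in\operatorname{Relint}(\tau)\cap W\cap N$ and $\tau\nsubseteq W$, so $(\tau,W)$ is dicritical, yet $\dim\tau=2=r$. The star subdivision along $\mathbb R_{\ge 0}v_0$ then produces a divisor whose center $V_\tau$ has dimension $n-\dim\tau > n-r-1$, so it is not a valid witness. The paper handles this by splitting the rank $< n-1$ case into $\dim\tau\ge r+1$ (star subdivision works) and $\dim\tau\le r$, where one must pick a general non-torus-invariant $Z\subseteq V_\tau$ of dimension $n-r-1$, pass to a semi-local model at a general point of $Z$ via \cite[Lemma 4.15]{CC}, and blow up along $Z$ to produce the required non-invariant divisor. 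You have no replacement for this second sub-case. Similarly, for $\operatorname{rank}\mathcal F = n-1$ the answer is $\bigcup_\tau\mathcal O_\tau$ over \emph{all} dicritical $\tau$ (not just top-dimensional ones): a cone $\tau$ of positive codimension still contributes the orbit $\mathcal O_\tau$, but the witnessing divisor lives over a single point of $\mathcal O_\tau$ and is built from a weighted blow-up of a top-dimensional \emph{semi-local} model at that point (\cite[Lemma 4.14]{CC}), not from a star subdivision of $\Sigma$ along $v_0$; the latter would again give a center of too large a dimension. Your statement that ``no lower-dimensional-codimension cone contributes'' in the boundary case is therefore also incorrect.
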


It is clear when $\cF=0$. 
If $\on{rank}\cF=n$, then $\cF=\cT_X$ is non-dicritical and $(\tau,W)$ is not dicritical for any $\tau\in\Sigma$. 
For the proof when $1\leq\on{rank}\cF\leq n-1$, it involves the language of the toroidal structures. 
So we leave the proof in the appendix. 

Using Lemma~\ref{lem:dicrit_locus}, we show the following properties for Fano toric foliations:
\begin{proposition}\label{prop:loci_connected}
     Let $\cF_W$ be a Fano toric foliation on a complete $\bQ$-factorial toric variety $X_\Sigma$ of a fan $\Sigma$ in $N_\mb R$. 
     \begin{enumerate}
        \item $\on{dicrit}(\cF_W)$ is locally closed and any two irreducible components of $\overline{\on{dicrit}(\cF_W)}$ have non-empty intersection contained in $\on{dicrit}(\cF_W)$. 
        In particular, $\on{dicrit}(\cF_W)$ is connected. 
        \item  Any two irreducible components of $\on{Sing}(\cF_W)$ intersect. In particular, $\on{Sing}(\mc F_W)$ is connected.
     \end{enumerate} 
\end{proposition}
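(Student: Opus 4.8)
The plan is to exploit the combinatorial description from Lemma~\ref{lem:dicrit_locus} together with the ampleness of $-K_{\cF_W}$, via its strictly convex support function $\phi := \phi_{-K_{\cF_W}}$, which is non-positive on $N_\bR$ (since $-K_{\cF_W} = \sum_{\rho\subseteq W} D_\rho$ is effective, $\phi(v_\rho)\le 0$ for every ray, hence $\phi\le 0$ everywhere). The crucial structural input is Lemma~\ref{lem:Fano_supp_fcn_zero_locus}: there is a distinguished cone $\tau_0\in\Sigma$ with $\phi(v)=0\iff v\in\tau_0$. I claim $\tau_0$ is precisely the linear span issue: $\phi\vert_\tau = 0$ exactly when $\tau\subseteq W$ (this should follow because $\phi$ restricted to a cone spanned by rays in $W$ vanishes on those ray generators, while strict convexity forbids $\phi$ vanishing on a cone not contained in $W$ — any such cone would be forced by the argument of Lemma~\ref{lem:Fano_supp_fcn_zero_locus} to merge with $\tau_0$). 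Thus $\tau_0 = \bigcup_{\rho\subseteq W,\,\rho\in\Sigma(1)}\rho$ spans $W' := \operatorname{span}_\bQ\{\rho\in\Sigma(1)\mid\rho\subseteq W\}$, and every ray $\rho\subseteq W$ is a ray of $\tau_0$.

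For part (1): first I would argue that $\on{dicrit}(\cF_W)$ is locally closed. From Lemma~\ref{lem:dicrit_locus} it is a union of orbits $\mc O_\tau$ (rank $\ge n-1$) or orbit closures $V_\tau$ (rank $< n-1$) over the dicritical pairs $(\tau,W)$; I would check that if $(\tau,W)$ is dicritical and $\sigma\preceq\tau$ with $\sigma$ in the closure-stratum then $(\sigma,W)$ is also dicritical (in the low-rank case), so that the union of the $V_\tau$ is already closed, whereas in the high-rank case the union of orbits is the complement of a closed set inside its closure. Then, for connectedness, the key step is to show that every irreducible component of $\overline{\on{dicrit}(\cF_W)}$ contains the orbit closure $V_{\tau_0}$ (equivalently, every dicritical cone $\tau$ has $\tau_0$ as a face, so $V_\tau\subseteq V_{\tau_0}$ — wait, the containment goes the other way: $\tau_0\preceq\tau \Rightarrow V_\tau\subseteq V_{\tau_0}$, so instead I want $V_{\tau_0}\subseteq V_\tau$, i.e. $\tau\preceq\tau_0$, which is false). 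Let me restate: the right statement is that every dicritical cone $\tau$ satisfies $\tau_0\preceq\tau$; then all the $V_\tau$ lie inside $V_{\tau_0}$, and — more usefully — $V_{\tau_0}$ meets each $V_\tau$ because $V_\tau\subseteq V_{\tau_0}$, hence the common point lies in the smallest one; to get that the intersection lies in $\on{dicrit}(\cF_W)$ I would show $(\tau_0,W)$ itself need not be dicritical but some face chain links them, so I would rather prove: for dicritical $\tau_1,\tau_2$, the cone $\tau_1\cap\tau_2$ contains $\tau_0$ as a face and $(\tau_1\cap\tau_2, W)$ is dicritical, giving $V_{\tau_1}\cap V_{\tau_2}=V_{\tau_1\cap\tau_2}\subseteq\on{dicrit}(\cF_W)$. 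The fact that $\tau_0\preceq\tau$ for dicritical $\tau$ is where I expect the real work: dicriticality gives $\on{Relint}(\tau)\cap W\cap N\ne\emptyset$, so $\tau$ meets $W$ in its relative interior; I would use that a point of $\on{Relint}(\tau)\cap W$ written in ray coordinates forces all rays $\rho$ of $\tau$ with $\phi(v_\rho)<0$ (i.e. $\rho\not\subseteq W$) to have zero coefficient — because $\phi$ of that point is $0$ (it lies in $W$, hence in $\tau_0$), while strict convexity makes $\phi$ of a positive combination strictly below the sum of $\phi$'s unless the rays with $\phi<0$ drop out — forcing the point, hence a cofacial subcone, to lie in $\tau_0$; combined with $\tau\not\subseteq W$ this pins down that the rays of $\tau$ contained in $W$ form exactly a face equal to $\tau_0\cap\tau$, and a short intersection-of-fans argument upgrades this to $\tau_0\preceq\tau$.

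For part (2): by \cite[Proposition 3.5 or similar in CC]{CC} (the combinatorial description of $\on{Sing}(\cF_W)$), $\on{Sing}(\cF_W)=\bigcup V_\sigma$ over cones $\sigma\in\Sigma$ satisfying a combinatorial non-transversality condition with $W$; I would show every such $\sigma$ again has $\tau_0$ as a face (or more directly that any two singular cones have a common face containing $\tau_0$), using the same strict-convexity mechanism: the singular cones are those where $W$ fails to be "in general position", and on the Fano variety the only place $W$ can be tangentially trapped is along $\tau_0$. Then $V_{\sigma_1}\cap V_{\sigma_2}\supseteq V_{\langle\sigma_1,\sigma_2\rangle}\ne\emptyset$ provided $\sigma_1,\sigma_2$ share a face, and $\tau_0$ being a common face guarantees this; hence any two components of $\on{Sing}(\cF_W)$ meet, giving connectedness.

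\textbf{Main obstacle.} The heart of both parts is the claim that the distinguished cone $\tau_0$ of Lemma~\ref{lem:Fano_supp_fcn_zero_locus} is a common face of every dicritical (resp. singular) cone — i.e. translating "$-K_{\cF_W}$ ample" into rigid combinatorial control of how $W$ sits relative to $\Sigma$. The ampleness is used only through strict convexity of $\phi$ and $\phi\le 0$; extracting from this that $W\cap N$ cannot meet the relative interior of any cone "transversally away from $\tau_0$" is the delicate point, and I expect it to require carefully tracking which ray-coefficients are forced to vanish when a lattice point of $W$ is expressed inside a cone. Everything after that — local closedness, the face-intersection bookkeeping, and deducing connectedness — is routine toric combinatorics.
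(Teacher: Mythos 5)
Your plan has the right starting ingredients (Lemma~\ref{lem:dicrit_locus}, the support function $\phi=\phi_{-K_{\cF_W}}$, Lemma~\ref{lem:Fano_supp_fcn_zero_locus}), but the structural claim you hang everything on is false, and it stems from a sign error in the support function. Since $-K_{\cF_W}=\sum_{\rho\subseteq W}D_\rho$, the convention $\phi(v_\rho)=-a_\rho$ gives $\phi(v_\rho)=-1$ for $\rho\subseteq W$ and $\phi(v_\rho)=0$ for $\rho\not\subseteq W$: the rays \emph{in} $W$ are exactly where $\phi$ is strictly negative, so $\tau_0$ is spanned by rays \emph{not} in $W$, the opposite of what you claim. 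Consequently your assertion ``every dicritical cone $\tau$ satisfies $\tau_0\preceq\tau$'' is also wrong: in the paper's own Example (second item, $X=\bP^3$, $W=\on{Span}_\bC((2,0,1),(0,2,1))$), one has $\tau_0=\sigma_{123}$, while $\sigma_{13}$ and $\sigma_{234}$ are dicritical and neither contains $\sigma_{123}$ as a face. You also write $V_{\tau_1}\cap V_{\tau_2}=V_{\tau_1\cap\tau_2}$, but the toric intersection formula goes the other way: orbit closures $V_\tau$ shrink as $\tau$ grows, and $V_{\tau_1}\cap V_{\tau_2}$ is governed by cones \emph{containing} both $\tau_1$ and $\tau_2$, not by $\tau_1\cap\tau_2$.

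The idea the paper actually uses, and which you are missing, is to pass to \emph{minimal} dicritical (resp.\ singular) cones $\tau_1,\tau_2$. Minimality forces $\tau_i$ to contain no ray of $W$: if some ray $\rho\subseteq W$ were a ray of $\tau_1$, deleting it from $\tau_1(1)$ would leave a strictly smaller cone that is still dicritical, contradicting minimality. Hence $\phi\vert_{\tau_i}=0$, so by Lemma~\ref{lem:Fano_supp_fcn_zero_locus} both $\tau_i$ are faces of the (simplicial, since $X_\Sigma$ is $\bQ$-factorial) zero-cone $\tau_0$; simpliciality then guarantees $\widetilde\tau:=\on{Cone}(\tau_1,\tau_2)$ is again a cone of $\Sigma$, and $(\widetilde\tau,W)$ is checked to be dicritical (take $v_i\in\on{Relint}(\tau_i)\cap W\cap N$ and observe $v_1+v_2\in\on{Relint}(\widetilde\tau)\cap W\cap N$). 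Then $V_{\widetilde\tau}\subseteq V_{\tau_1}\cap V_{\tau_2}$ and $V_{\widetilde\tau}\subseteq\on{dicrit}(\cF_W)$, which is exactly the statement needed. The direction of inclusion is $\tau_i\preceq\tau_0$, not $\tau_0\preceq\tau_i$, and the common cone is the \emph{span} $\on{Cone}(\tau_1,\tau_2)$, not the intersection $\tau_1\cap\tau_2$. Part (2) uses the same mechanism with the combinatorial description $\on{Sing}(\cF_W)=\bigcup V_\tau$ over singular $(\tau,W)$ from \cite[Proposition 3.9]{CC}. Until you correct the sign on $\phi$ and replace ``every dicritical cone contains $\tau_0$'' with ``minimal dicritical cones are faces of $\tau_0$,'' the argument does not go through.
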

\begin{proof}
\begin{enumerate}

    \item The local closedness of $\on{dicrit}(\cF_W)$ follows immediately from Lemma~\ref{lem:dicrit_locus}. 

    To show any two irreducible components of $\overline{\on{dicrit}(\cF_W)}$ have non-empty intersection contained in $\on{dicrit}(\cF_W)$, it suffices to show that if $V_{\tau_1}$ and $V_{\tau_2}$ are two irreducible components of $\overline{\on{dicrit}(\cF_W)}$, then there exists $\cO_{\widetilde{\tau}}\subseteq\on{dicrit}(\cF_W)$ such that $\tau_1$, $\tau_2\preceq\widetilde\tau$.
    
    Let $\phi=\phi_{-K_{\mc F_W}}$ be the support function for $-K_{\cF_W}$ and
    $\tau_1$, $\tau_2\in\{\tau\in\Sigma\mid (\tau,W)\ \text{is dicritical}\}$ be two minimal elements. 
    \begin{claim}
        $W\cap\tau_i$ does not contain any ray in $\tau_i(1)$ for $i=1$, $2$.
    \end{claim}
    \begin{claimproof}
        Suppose not, then there is a ray $\rho_1\in\tau_1(1)$ such that $\rho_1\subseteq W$. 
        Let $\underline{\tau_1} = \on{Cone}(\rho\mid\rho\in\tau_1(1)\setminus\{\rho_1\})$. 
        Note that $\underline{\tau_1}\preceq\tau_1$ and $(\underline{\tau_1},W)$ is dicritical. 
        This contradicts to the minimality of $\tau_1$. 
        So there is no ray $\rho\in\tau_1(1)$ contained in $W$. 
        Similarly for $\tau_2$. 
        This completes the proof of the claim. 
    \end{claimproof}
    
    Thus, we have $\phi\vert_{\tau_i}=0$ for $i=1$, $2$. 
    By Lemma~\ref{lem:Fano_supp_fcn_zero_locus}, there is a cone $\tau_0\in\Sigma$ such that $\phi(v)=0$ if and only if $v\in\tau_0$. 
    Thus, we have $\tau_i\preceq \tau_0$ for $i=1$, $2$. 
    Since $X_\Sigma$ is $\bQ$-factorial, we have $\tau_0$ is simplicial and thus $\widetilde\tau := \on{Cone}(\tau_1,\tau_2)\in\Sigma$ because $\Sigma$ is a fan. 
    \begin{claim}
        $(\widetilde\tau,W)$ is dicritical. 
    \end{claim}
    \begin{claimproof}
        As $(\tau_i,W)$ is dicritical, there are $v_i\in\on{Relint}(\tau_i)\cap W\cap N$. 
        Then $v_1+v_2\in\on{Relint}(\widetilde\tau)\cap W\cap N$. 
        Moreover, we have $\widetilde\tau\nsubseteq W$ as $\tau_i\nsubseteq W$ for $i=1$, $2$. 
        Therefore, $(\widetilde\tau,W)$ is dicritical. 
        This completes the proof of the claim. $\quad\blacksquare$
    \end{claimproof}
    
    \item  Recall \cite[Proposition 3.9]{CC} that $\on{Sing}(\cF_W)=\bigcup_{\tau\in\Sigma,\,(\tau,W)\textnormal{ is singular}}V_\tau$ where $(\tau,W)$ is singular if $W\cap \bC\tau\neq\on{Span}_\bC(S)$ for any $S\subseteq\tau(1)$. 
    Let $\tau_1$, $\tau_2\in\{\tau\in\Sigma\mid(\tau,W)\ \text{is singular}\}$ be two minimal elements. 
    \begin{claim}
        $W\cap\tau_i$ does not contain any ray in $\tau_i(1)$ for $i=1$, $2$.
    \end{claim}
    \begin{claimproof}
        We write $\tau_1=\bR_{\geq 0}v_1+\cdots +\bR_{\geq 0}v_\ell$ where $\ell=\dim\tau_1$ and each $v_j\in N$ is primitive. 
        Since $(\tau_i,W)$ is singular, we have $W\cap(\bC v_1+\cdots+\bC v_\ell)\neq\on{Span}_\bC(S)$ for any $S\subseteq\{v_1\ldots,v_\ell\}$. 
        Suppose on the contrary that $v_1\in W$. Then we have $W\cap(\mb Cv_2+\cdots+\mb Cv_\ell)\neq\on{Span}(S)$ for any $S\subseteq\{v_2\ldots,v_\ell\}$, which means $(\on{Cone}(v_2,\ldots,v_\ell),W)$ is singular, contradicting the minimality of $\tau$. 
        Similarly we have $W\cap \tau_2$ does not contain any ray in $\tau_2(1)$. 
        This completes the proof of the claim. $\quad\blacksquare$
    \end{claimproof}
    
    As a result, we have $\phi\vert_{\tau_i}=0$. 
    By Lemma~\ref{lem:Fano_supp_fcn_zero_locus}, there is a cone $\tau_0\in\Sigma$ such that $\phi(v)=0$ if and only if $v\in\tau_0$. 
    Thus, we have  $\tau_i\preceq \tau_0$ for $i=1$, $2$. 
    Since $X_\Sigma$ is $\bQ$-factorial, we have $\tau_0$ is simplicial and thus $\widetilde\tau := \on{Cone}(\tau_1,\tau_2)\in\Sigma$ because $\Sigma$ is a fan. 

    Moreover, $W\cap\widetilde\tau$ does not contain any ray in $\widetilde\tau(1)$. 
    Hence $W\cap\bC\widetilde\tau\neq\on{Span}_\bC(S)$ for any $S\subseteq\widetilde\tau(1)$. 
    Therefore, $(\widetilde\tau,W)$ is singular. 
\end{enumerate}
\end{proof}

\begin{remark}
    Note that Proposition~\ref{prop:loci_connected} holds true for $\cF=\cT_X$ without Fano condition. 
    Indeed, let $X$ be a normal projective variety. 
    Note that $\cT_X$ is a foliation with $\operatorname{Sing}(\cT_X)=\emptyset$ by definition of singular locus of the foliation. 
\end{remark}

\begin{example}
\begin{enumerate} 
    \item ($\on{dicrit}(\cF_W)$ might not be Zariski closed in general.)
    
    Let $N=\bZ^4$ and $\Sigma$ be the complete fan generated by $v_1=(1,0,0,0)$, $v_2=(0,1,0,0)$, $v_3=(0,0,1,0)$, $v_4=(0,0,0,1)$, and $v_5=(-1,-1,-1,-1)$. 
    We have $X_\Sigma\simeq\mb P^4$. 
    Let $W=\on{Span}_\bC((0,1,0,0),(0,0,1,1),(\pi,0,1,0))$. 
    Then 
    $-K_{\cF_W}=\sum_{\rho\in\Sigma(1),\,\rho\subseteq W}D_\rho=D_{\bR_{\geq 0}v_2}$, which is ample. 
    Let $\sigma_{234}=\on{Cone}(v_2,v_3,v_4)$ and $\sigma_{1234}=\on{Cone}(v_1,v_2,v_3,v_4)$. 
    One can check that $(\sigma_{234},W)$ is dicritical and $(\sigma_{1234},W)$ is non-dicritical. 
    Hence $\mc O_{\sigma_{234}}\subseteq \on{dicrit}(\mc F_W)$ and $\mc O_{\sigma_{1234}}\not\subseteq \on{dicrit}(\mc F_W)$. 
    Since $\mc O_{\sigma_{1234}}\subseteq V_{\sigma_{234}}=\overline{\mc O_{\sigma_{234}}}$, we conclude that $\on{dicrit}(\mc F_W)$ is not Zariski closed. 

    \item ($\on{dicrit}(\mc F_W)$ and $\overline{\on{dicrit}(\mc F_W)}$ can be reducible in general.) 
    
    Let $N=\mb Z^3$ and $\Sigma$ be the fan generated by $v_1=(1,0,0),v_2=(0,1,0),v_3=(0,0,1)$ and $v_4=(-1,-1,-1)$. We have $X_\Sigma\simeq\mb P^3$. Consider $W=\on{Span}_{\mb C}((2,0,1),(0,2,1))\subseteq N_\mb C$. Then $-K_{\mc F_W}=\sum_{\rho\in\Sigma(1),\rho\subseteq W}D_\rho=D_{\bR_{\geq 0}v_4}$, which is ample. 
    Let $\sigma_{ij}=\on{Cone}(v_i,v_j)$ and $\sigma_{ijk}=\on{Cone}(v_i,v_j,v_k)$. 
    One can check that the cone $\tau\in\Sigma$ such that $(\tau,W)$ is dicritical are $\sigma_{123}$, $\sigma_{134}$, $\sigma_{234}$, $\sigma_{13}$, and $\sigma_{23}$. 
    Hence, $\overline{\on{dicrit}(\mc F_W)}=\on{dicrit}(\mc F_W)=V_{\sigma_{13}}\cup V_{\sigma_{23}}$, which is a union of two curves. 
    Note that in this case $\on{dicrit}(\cF_W)=\on{Sing}(\cF_W)$.
\end{enumerate}  
\end{example}

In general, we also show the irreducibility of the dicritical loci and singular loci for Fano foliations, which are not necessarily toric, of rank one on normal projective varieties. (See Proposition~\ref{prop:Fano_rank_one_general}.)

\section{\texorpdfstring{Interpolated $\delta$-lct}{Interpolated delta-lct}}
In this section, instead of usual ($\delta$-)log canonical thresholds, we study the interpolated $\delta$-log canonical thresholds. 



\begin{definition}
    Let $\mc F$ be a foliation on a normal variety $X$, and let $\Delta\in[0,1]$ be an $\mb R$-divisor on $X$ such that $(X,\cF,\Delta,t)$ is an adjoint foliated structure. 
    For any $\delta>0$, we define two interpolated $\delta$-lct as follows:
    \begin{align*}
        \dlctl(X,\mc F,\Delta)&:=\inf\{t\in[0,1]\mid(X,\mc F,\Delta,t)\ \text{is}\ \delta\text{-lc}\}\ \text{and}\\
        \dlctu(X,\mc F,\Delta)&:=\sup\{t\in[0,1]\mid(X,\mc F,\Delta,t)\ \text{is}\ \delta\text{-lc}\}.
    \end{align*}
\end{definition}
\begin{remark}
    The interpolated lct was first introduced M\textsuperscript{c}Kernan as $0$-$\overline{\on{LCT}}(X,\cF,\Delta=0)$. 
\end{remark}

\begin{definition} 
    Let $\delta$ be a non-negative real number. 
    We define $\dlctl_{n,r}^{\on{tor}}$ (resp. $\dlctu_{n,r}^{\on{tor}}$) to be the set of $\dlctl(X,\cF)$ (resp. $\dlctu(X,\cF)$) where $(X,\cF)$ runs through all couples such that $\cF$ is a toric foliation of rank $r$ on a $\bQ$-factorial toric variety $X$ of dimension $n$. 
 \end{definition}

We first show the following theorem that $\dlctu_{n,r}^{\on{tor}}$ satisfies neither the ascending chain condition nor descending chain condition: 
\begin{theorem}\label{thm:upper_lct_dense}
    Fix two positive integers $n$, $r$ with $r<n$ and a positive real number $\delta\leq\frac{1}{2}$. 
    Let $m=\lfloor\frac{1}{\delta}\rfloor$. 
    Then 
    $\dlctu_{n,r}^{\on{tor}}\cap [1-m\delta,1-\frac{m}{2m-1}\delta]$ is dense in $[1-m\delta,1-\frac{m}{2m-1}\delta]$. 
    In particular, $\dlctu_{n,r}^{\on{tor}}$ satisfies neither the ascending chain condition nor the descending chain condition. 
\end{theorem}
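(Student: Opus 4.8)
The plan is to reduce $\dlctu$ of a toric adjoint foliated structure to an explicit combinatorial quantity and then to produce, for a dense set of target values in $[1-m\delta,1-\tfrac{m}{2m-1}\delta]$, toric foliated pairs realizing them.

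\emph{Combinatorial reduction.} Fix $(X_\Sigma,\cF_W)$ with $X_\Sigma$ complete $\bQ$-factorial of dimension $n$ and $\on{rank}\cF_W=r$. The divisor $K_{(X,\cF_W,0,t)}=tK_{\cF_W}+(1-t)K_X$ has support function $\phi_t$ with $\phi_t(v)=B_1(v)+(1-t)B_0(v)$ on each cone, where $B_1(v)$ (resp. $B_0(v)$) is the sum of the barycentric coordinates of the primitive $v$ along the rays of the cone that are contained in (resp. not contained in) $W$. By Lemma~\ref{lem:delta_lc_combinatorial}, $(X,\cF_W,0,t)$ is $\delta$-lc iff for every primitive $v\in N$ one has $B_1(v)+(1-t)B_0(v)\ge(1-t)\delta$ when $v\notin W$ and $\ge\delta$ when $v\in W$. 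Each inequality is affine in $t$; the only ones strictly decreasing in $t$ come from $v\in W$ with $B_0(v)>0$, holding exactly for $t\le 1-\tfrac{\delta-B_1(v)}{B_0(v)}$. Hence, once the remaining (non-decreasing) inequalities admit a common solution in that range, $\dlctu(X_\Sigma,\cF_W)=\min\bigl(1,\ \inf\{\,1-\tfrac{\delta-B_1(v)}{B_0(v)}:v\in W\cap N\text{ primitive},\ v\in\on{Relint}(\tau),\ B_0(v)>0\,\}\bigr)$; and a short analysis shows this feasibility holds provided every valuation $v\notin W$ with $B_1(v)+B_0(v)<\delta$ has $B_1(v)>0$ — in particular if every cone of $\Sigma$ containing no ray in $W$ is $\delta$-log canonical (e.g. smooth).

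\emph{The examples.} Given a target $c\in[1-m\delta,1-\tfrac{m}{2m-1}\delta]$, the aim is to build $\Sigma$ and $W$ so that the above infimum is a minimum attained at a single distinguished valuation $v^\ast$ with value $c$ (or arbitrarily close to it). I would choose $W\subseteq N_\bC$ of complex dimension $r$ so that $W\cap N_\bR$ is as small as possible — a line $\bR v^\ast$ when $r\le(n+1)/2$, and otherwise a join/product of such a configuration with the fan of a projective space to absorb the extra coordinates — so that in every case the lattice points of $W$ relevant to the infimum are just the multiples of a single primitive $v^\ast$. One then takes $\Sigma$ containing one distinguished simplicial cone $\sigma$ with $v^\ast\in\on{Relint}(\sigma)$ and all other maximal cones smooth; the value of $c$ is governed by the pair $(B_1(v^\ast),B_0(v^\ast))$, i.e. by the position of $v^\ast$ inside $\sigma$. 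Varying $\sigma$ over families of singular cones whose interior–lattice–point data sweep out a dense set yields a dense set of values $c$: for the part of the interval near $1$ one uses a three–dimensional terminal cyclic quotient cone $\tfrac1N(1,b,N-b)$ (terminal, hence automatically $\delta$-lc, and containing interior lattice points of $\phi$-value $1+j/N$), and for the lower part "skew" high–index simplicial cones $\on{Cone}(w,u_1,u_2)$ with $w\subseteq W$ and $u_1,u_2$ balanced across $W$, arranged so that the offending lattice points of $\sigma$ all have $B_1>0$ (so feasibility survives even though $X$ itself fails to be $\delta$-lc); a variant covers the remaining middle sub-interval. In each case one checks (a) $v^\ast$ minimizes $1-\tfrac{\delta-B_1}{B_0}$ over $W\cap N$, (b) the feasibility condition, and (c) completeness and $\bQ$-factoriality of $\Sigma$ (completing a fan containing a fixed simplicial cone by smooth cones is standard).

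\emph{Density and conclusion.} The set of values $c$ so obtained is dense in $[1-m\delta,1-\tfrac{m}{2m-1}\delta]$ by elementary density facts such as "$\{1+j/N:0<j<N,\ \gcd(j,N)=1\}$ is dense in $[1,2]$" together with the analogues for the skew–cone families. Hence $\dlctu_{n,r}^{\on{tor}}$ contains a dense subset of a non-degenerate closed interval, so it admits strictly increasing convergent sequences (failure of ACC) and strictly decreasing convergent sequences (failure of DCC). The main obstacle is the explicit construction: one must realize a \emph{dense} set of threshold values while guaranteeing that $v^\ast$ is the binding valuation — which requires controlling \emph{every} lattice point of $\Sigma$ (a single $v\in W$ with smaller ratio $\tfrac{\delta-B_1}{B_0}$ lowers $\dlctu$, and a single $v\notin W$ with $B_1=0$ and small total discrepancy destroys feasibility outright) — and doing this in the prescribed dimension $n$ with a foliation of the prescribed rank $r$, which forces the case division according to the size of $r$ relative to $n$ and the use of several cone families to cover the full target interval.
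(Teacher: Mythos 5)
Your combinatorial reduction is correct and agrees with the paper: writing $\phi_t(v)=B_1(v)+(1-t)B_0(v)$ in barycentric coordinates and applying Lemma~\ref{lem:delta_lc_combinatorial} does yield $\dlctu$ as $\inf\{1-\frac{\delta-B_1(v)}{B_0(v)}\}$ over the relevant $v\in W\cap N$, subject to a feasibility check from the $v\notin W$ constraints. That part is sound.

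The gap is in everything after that. You never derive the interval $[1-m\delta,\,1-\frac{m}{2m-1}\delta]$: the endpoints come from an arithmetic bound $\frac{1}{m}<q<\frac{2m-1}{m}$ on $q=B_0(v^*)$, and your proposal never identifies this bound or explains the role of $m=\lfloor 1/\delta\rfloor$. More seriously, the construction is only described in outline; for each of your proposed families (terminal cyclic quotient cones $\tfrac1N(1,b,N-b)$, ``skew'' cones with $w\subseteq W$, the joins with projective-space fans) you would need to verify (i) that the distinguished $v^*$ is the binding valuation, (ii) that the feasibility inequalities for $v\notin W$ hold, and (iii) that the resulting thresholds $1-\frac{\delta-B_1}{B_0}$ cover a \emph{dense} subset of the prescribed interval; none of these is checked, and the density assertion (``elementary density facts'') is simply stated. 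As a sanity check, the terminal cyclic quotient family only produces $q\in(1,2)$, hence thresholds only in $(1-\delta,1-\delta/2)$, which is a strict sub-interval of the target even for $m=2$, so the interval-covering claim genuinely requires work. You have also added an unnecessary obstruction by insisting on complete fans: the set $\dlctu_{n,r}^{\on{tor}}$ is defined over all $\bQ$-factorial toric varieties, and the paper works directly with a single affine cone $U_\sigma$, avoiding all completion issues.

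For comparison, the paper fixes $\sigma=\on{Cone}(e_1,\ldots,e_n)$ and varies the \emph{lattice}: $N$ is generated by $\bZ^n$ and $(1-\frac ms)e_1+\frac1se_2$ for large $s$ coprime to $m$, with $W$ an $r$-dimensional complex subspace chosen so that $W\cap N=\bZ v^*$ for one explicit primitive $v^*$. Every primitive $v\in\sigma\cap N$ has $\phi_{K_X}(v)=\lceil\frac{km}{s}\rceil-\frac{km}{s}+\frac ks>\frac1m\geq\delta$, which gives feasibility for free, and the single binding value $q=\phi_{K_X}(v^*)$ ranges over a dense subset of $(\frac1m,\frac{2m-1}m)$ as $s,k$ vary, yielding thresholds $b_s=1-\delta/q$ dense in the stated interval. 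Up to a $\on{GL}_n(\bZ)$-change of coordinates this is the same idea as varying the cone in $\bZ^n$, but this particular parameterization makes both the feasibility check and the density of the achieved thresholds immediate, which is exactly the content your sketch leaves unestablished.
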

\begin{proof}
    Let $s$ be a large integer which is coprime to $m$ and $\{e_1,\ldots,e_n\}$ be a basis of $\bZ^n$. 
    Let $N$ be the lattice generated by $e_1$, $\ldots$, $e_n$, and $(1-\frac{m}{s})e_1+\frac{1}{s}e_2$. Then all the primitive vectors in $\{a_1e_1+\cdots+a_ne_n\mid a_i\in\mathbb R, 0\leq a_i<1\}\cap N$ are $(\lceil \frac{km}{s}\rceil-\frac{km}{s})e_1+\frac{k}{s}e_2$ where $0< k< s$. 
    Note that 
    \begin{itemize}
        \item $\frac{1}{m}<\lceil \frac{km}{s}\rceil-\frac{km}{s}+\frac{k}{s}<1+\frac{m-1}{m}$ for $0< k< s$ and
        \item for any real number $q\in (\frac{1}{m},1+\frac{m-1}{m})$, 
    there exists $\tilde k$ such that $0< \tilde k< s$ and $0<|\lceil \frac{\tilde km}{s}\rceil-\frac{\tilde km}{s}+\frac{\tilde k}{s}-q|<\frac{2m-2}{s}$. 
    \end{itemize}
    
    We consider $\sigma=\operatorname{Cone}(e_1,\ldots,e_n)$, and let $W\subseteq N_\mathbb C$ be an $r$-dimensional complex vector space such that $W\cap N=\mathbb Z((\lceil \frac{\tilde km}{s}\rceil-\frac{\tilde km}{s})e_1+\frac{\tilde k}{s}e_2)$. Then one can check by Lemma~\ref{lem:delta_lc_combinatorial} that $\mathcal F=\mathcal F_W$ is a toric foliation of rank $r$ on $X=U_\sigma$ such that $(X,\mathcal F,t)$ is $\delta$-lc if and only if $t\in[0,b_s]$, where 
    \[b_s=\frac{\lceil \frac{\tilde km}{s}\rceil-\frac{\tilde km}{s}+\frac{\tilde k}{s}-\delta}{\lceil \frac{\tilde km}{s}\rceil-\frac{\tilde km}{s}+\frac{\tilde k}{s}}.\]
    We have 
    $\lim\limits_{s\to\infty,\,\on{gcd}(m,s)=1}b_s=\frac{q-\delta}{q}$. Hence, $\frac{q-\delta}{q}$ is an accumulation point of $\dlctu_{n,r}^{\on{tor}}$. As a result, every point in $(1-m\delta,1-\frac{m}{2m-1}\delta)$ is an accumulation point of $\dlctu_{n,r}^{\on{tor}}$.
\end{proof}

\begin{example}[{$\dlctl$ does not satisfy the ACC}]
    Let $N$ be the lattice generated by $(1,0), (0,1)$ and $(\frac{1}{n},\frac{1}{n})$, and let $\sigma$ be cone of the first quadrant. 
    We consider $W=\mb C(0,1)$ and $\delta=\frac{1}{2}$. 
    Then for $n> 4$,
    $(U_{\sigma,N},\cF_W,t)$ is $\frac{1}{2}$-lc if and only if $t\in[\frac{n-4}{n-2},1]$.
    Hence $\frac{1}{2}\text{-}\underline{\on{LCT}}(U_{\sigma,N},\mc F_W)=\frac{n-4}{n-2}$, and $\{\frac{n-4}{n-2}\}_{n\in\bN,n>4}$ is a strictly increasing sequence.

\end{example}

 \begin{proposition}\label{prop_fan_to_cone}
    $\dlctl_{n,r}^{\on{tor}}$ is the set of $\dlctl(X,\mc F)$ where $\mc F$ is a toric foliation of rank $r$ on an affine toric variety $X=U_\sigma$.
\end{proposition}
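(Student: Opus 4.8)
The plan is to show the two sets are equal by proving each containment. One direction is essentially trivial: every affine toric variety $U_\sigma$ with a toric foliation is itself a $\bQ$-factorial toric variety provided $\sigma$ is simplicial (and we may reduce to this case; see below), so the set on the right is contained in $\dlctl_{n,r}^{\on{tor}}$. For the reverse containment, I would start with an arbitrary $\bQ$-factorial complete (or merely arbitrary) toric $X=X_\Sigma$ of dimension $n$ and a rank-$r$ toric foliation $\cF_W$, and analyze $\dlctl(X,\cF_W)$ via the combinatorial criterion of Lemma~\ref{lem:delta_lc_combinatorial}. The key point is that $\delta$-log canonicity of $(X,\cF_W,t)$ is a condition checked \emph{cone by cone}: by Lemma~\ref{lem:delta_lc_combinatorial} together with Proposition~\ref{prop:adjoint_log_discrepancy}, $(X_\Sigma,\cF_W,t)$ is $\delta$-lc if and only if $(U_\sigma, \cF_{W},t)$ is $\delta$-lc for every maximal cone $\sigma\in\Sigma$, since the support function $\phi_{K_t}$ restricted to each $\sigma$ only sees the rays of $\sigma$, and a primitive element $v\in|\Sigma|$ lies in some maximal $\sigma$.

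From this localization, I would deduce
\[
\dlctl(X_\Sigma,\cF_W)=\sup_{\sigma\in\Sigma(n)}\dlctl(U_\sigma,\cF_W),
\]
because $(X,\cF,t)$ is $\delta$-lc exactly when $t$ lies in the intersection over $\sigma$ of the sets $\{t : (U_\sigma,\cF_W,t)\ \delta\text{-lc}\}$, and each of these is an interval of the form $[\dlctl(U_\sigma,\cF_W),\,\cdot\,]$ or is empty — here one uses that the defining inequalities $\phi_{K_t}(v)\ge (1-t)\delta$ or $\ge\delta$ are affine in $t$, so the $\delta$-lc locus in $t$ is an interval, and membership of the left endpoint follows from closedness of the conditions. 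If some $U_\sigma$ fails to be $\delta$-lc for all $t\in[0,1]$ then $X_\Sigma$ does too and $\dlctl(X_\Sigma,\cF_W)$ is not attained; otherwise the supremum is realized at the cone with the largest threshold. In particular $\dlctl(X_\Sigma,\cF_W)=\dlctl(U_{\sigma_0},\cF_W)$ for the worst maximal cone $\sigma_0$, exhibiting it as the $\dlctl$ of an affine toric variety of the same dimension with a rank-$r$ toric foliation. The only subtlety is the $\bQ$-factoriality requirement in the definition of $\dlctl_{n,r}^{\on{tor}}$: one must ensure $U_{\sigma_0}$ (equivalently $\sigma_0$ simplicial) can be arranged. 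I would handle this by passing to a simplicial refinement $\Sigma'$ of $\Sigma$ first; by Proposition~\ref{prop:adjoint_log_discrepancy} and the fact that $\delta$-lc is tested over all divisors over $X$, refining $\Sigma$ does not change the set of valuations being tested, hence does not change $\dlctl$, and now every maximal cone is simplicial.

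The main obstacle I anticipate is the careful bookkeeping around when the $\delta$-lc interval in $t$ is nonempty and whether its left endpoint is attained — i.e., making rigorous that $\dlctl(U_\sigma,\cF_W)$ is genuinely achieved (so that it belongs to the defining set, rather than being an infimum that is not a minimum) and that taking a supremum over finitely many maximal cones commutes correctly with the "is $\delta$-lc" quantifier. This is where one needs the affineness in $t$ of the inequalities in Lemma~\ref{lem:delta_lc_combinatorial} and a compactness/finiteness argument over $\Sigma(n)$. Everything else is a routine translation between the fan-theoretic and cone-theoretic pictures.
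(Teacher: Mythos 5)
Your main argument — reduce to maximal cones via the cone-by-cone combinatorial criterion, yielding $\dlctl(X_\Sigma,\cF_W)=\max_{\sigma\in\Sigma(n)}\dlctl(U_\sigma,\cF_W)$ — is exactly the paper's proof, which states this identity as a one-liner with no elaboration. Your added bookkeeping (the $\delta$-lc locus in $t$ is an intersection of closed half-lines, hence a closed interval, so the infimum is a minimum and the max over a finite fan is attained) is a correct and reasonable filling-in of details the paper omits.

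However, one auxiliary step in your argument is wrong as stated. You claim that refining $\Sigma$ to a simplicial $\Sigma'$ \emph{does not change} $\dlctl$ because "the set of valuations being tested does not change." This is not correct: refining $\Sigma$ adds new rays, hence changes $K_{X_\Sigma}$ (and $K_{\cF}$ if new $W$-rays appear), so $K_{(X_{\Sigma'},\cF,\Delta,t)}$ is not the pullback of $K_{(X_\Sigma,\cF,\Delta,t)}$, and adjoint log discrepancies are computed against a different divisor. In short, $\delta$-lc and $\dlctl$ are birational invariants of the quadruple $(X,\cF,\Delta,t)$, not of the foliation $\cF_W$ by itself. Fortunately this step is unnecessary: in the $\subseteq$ direction $X_\Sigma$ is already $\bQ$-factorial so every maximal cone is simplicial, and in the $\supseteq$ direction one simply takes $\sigma$ simplicial (which is the intended scope, and which suffices since these $U_\sigma$ arise as charts of $\bQ$-factorial fans). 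So your proof is essentially correct modulo deleting the refinement remark.
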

\begin{proof}
    Let $\cF_W$ be a toric foliation on a toric variety $X_\Sigma$. 
    Note that 
    \[\dlctl(X_\Sigma,\cF_W)=\max\limits_{\sigma\in\Sigma\ \text{is maximal}}\dlctl(U_{\sigma},\cF_W).\]
\end{proof}


\begin{definition}\label{def_delta_L}
    Let $\delta>0$, $s\in\bN$, and a non-negative integer $\ell < s$. 
    \begin{enumerate}
        \item For any $x=(x_1,\ldots,x_s)\in\mb R^s$, we define $\psi_\ell\colon \bR^s\to\bR$ by $\psi_\ell(x) = \sum_{i=1}^\ell\{x_i\}$ and $\iota_{i,j}(x)=1$ if $\psi_i(x)=\psi_j(x)$ and $\iota_{i,j}(x)=0$ otherwise where $\{x\}:= x-\lfloor x\rfloor$. 
        \item Define $\delta\text{-}V_{s,\ell}$ to be the set of $x\in(0,1)^s\cap\mb Q^s$ such that
        \begin{enumerate}
            \item  $\on{index}(x_i)\mid\on{index}(x_1,\ldots,\widehat{x_i},\ldots,x_s)$ for each $i$,
            \item  $\delta>\psi_s(x)$, and
            \item  $t_{s,\ell,\delta}(x)\psi_\ell(mx)+(1-t_{s,\ell,\delta}(x))\psi_s(mx)\geq (1-t_{s,\ell,\delta}(x)+\iota_{s,\ell}(mx)t_{s,\ell,\delta}(x))\delta$ for all $m\in\mb Z$, where 
            \[t_{s,\ell,\delta}(x):=\frac{\delta-\psi_s(x)}{\delta-(\psi_s(x)-\psi_\ell(x))}.\]
        \end{enumerate}
        \item $\delta\text{-}L_{s,\ell}:=\{t_{s,\ell,\delta}(x)\mid x\in \delta\text{-}V_{s,\ell}\}$.
    \end{enumerate}
\end{definition}

\begin{remark}
Choose a basis $\{e_1,\ldots,e_s\}$ of the lattice $\bZ^s$, and let $N$ be the lattice generated by $e_1,\ldots,e_s$ and $x=x_1e_1+\ldots+x_se_s$. 
Let $\sigma=\on{Cone}(e_1,\ldots,e_s)$ and $W=\mb Ce_1+\cdots+\mb Ce_\ell$. 
Then the item (2a) says that each $e_i$ is primitive in $N$; the items (2b) and (2c) guarantee that $(U_{\sigma,N},\mc F_W)$ is $(t_{s,\ell,\delta}(x),\delta)$-adjoint log canonical and $\dlctl(U_{\sigma,N},\mc F_W)=t_{s,\ell,\delta}(x)$.
\end{remark}

\begin{theorem}[{\cite[Theorem 4.3]{ambro06} or \cite{lawrence4finite}}]\label{thm_subgroups_in_real_torus}
    Let $R=\mb R^d/\mb Z^d$ be a real torus.
    \begin{enumerate}
        \item Let $U\subseteq R$ be a non-empty open subset. 
        Then the set of closed subgroups of $R$ which do not intersect $U$ has finitely many maximal elements with respect to inclusion.
        \item The set of finite unions of closed subgroups of $R$ satisfies the descending chain condition.
    \end{enumerate}
\end{theorem}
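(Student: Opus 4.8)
The plan is to treat (1) by Pontryagin duality together with a quantitative equidistribution estimate, and (2) by the structure theory of closed subgroups plus an induction on dimension. Write $\widehat R=\on{Hom}(R,\bR/\bZ)\cong\bZ^d$. For a closed subgroup $H\subseteq R$ put $H^{\perp}=\{b\in\widehat R: b|_H\equiv 0\}$, and for $A\subseteq\widehat R$ put $A^{\perp}=\{x\in R: b(x)=0\text{ for all }b\in A\}$; these are inclusion-reversing, mutually inverse bijections between the closed subgroups of $R$ and the subgroups of $\bZ^d$. The condition $H\cap U=\emptyset$ translates to a property of $A=H^{\perp}$ that is preserved when $A$ is enlarged, so $\mathcal G_U:=\{A\subseteq\bZ^d: A^{\perp}\cap U=\emptyset\}$ is an up-set, and (1) becomes the assertion that $\mathcal G_U$ has only finitely many minimal elements; equivalently, that every maximal closed subgroup missing $U$ has "bounded complexity".

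\textbf{The Diophantine input.} The key estimate is: for every non-empty open $U\subseteq R$ there is a constant $C=C(U,d)$ such that every closed $H$ with $H\cap U=\emptyset$ contains a non-zero $b\in H^{\perp}$ with $|b|\le C$ (Euclidean length). I would prove this by Fourier analysis on $R$. Fix a ball $B(x_0,\rho)\subseteq U$; let $\mu_H$ be the Haar probability measure of $H$, so $\widehat{\mu_H}=\mathbf{1}_{H^{\perp}}$; let $\nu$ be normalized Lebesgue measure on $B(x_0,\rho/2)$ and $\eta\ge 0$ a smooth bump supported in $B(0,\rho/2)$ with $\int\eta>0$. Since $H$ misses $B(x_0,\rho)$, the positive measure $\mu_H*\widetilde\nu$ (with $\widetilde\nu(A)=\nu(-A)$) is supported at distance $\ge\rho/2$ from $0$, hence pairs to $0$ against $\eta$; expanding in Fourier series gives $\int\eta=-\sum_{0\neq b\in H^{\perp}}\widehat\eta(b)\,\widehat\nu(b)$, so $\int\eta\le\bigl(\sup_{|b|>R}|\widehat\nu(b)|\bigr)\sum_b|\widehat\eta(b)|$ whenever $H^{\perp}$ has no non-zero vector of length $\le R$. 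The Bessel-type decay of $\widehat\nu$ drives the right-hand side below the fixed positive number $\int\eta$ once $R$ exceeds a threshold depending only on $\rho$ and $d$, and that threshold is the claimed $C$.

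\textbf{Proof of (1).} I would induct on $d$, the case $d=0$ being trivial. Given a closed $H$ with $H\cap U=\emptyset$, use the estimate to choose a shortest non-zero $b_0\in H^{\perp}$ and write $b_0=mb$ with $b\in\bZ^d$ primitive, so $|b|\le C(U,d)$; then $H$ is contained in the closed subgroup $\chi_b^{-1}(\tfrac{1}{m}\bZ/\bZ)$, whose underlying set is a disjoint union of $m$ translates of the codimension-one subtorus $\ker\chi_b\cong\bT^{d-1}$. There are only finitely many primitive $b$ of length $\le C(U,d)$, and for each only finitely many such translates; intersecting $H$ with one such translate and transporting it into $\bT^{d-1}$ reduces matters to the same problem in dimension $d-1$ (for the mildly larger class of cosets of closed subgroups, where the induction runs identically). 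Carrying the finitely many choices through yields a finite list containing every maximal closed subgroup of $R$ missing $U$.

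\textbf{Proof of (2), and the main obstacle.} Every closed subgroup $H\subseteq R$ is a finite union of cosets of its identity component $H^{0}$, a subtorus of finite index in $H$: lifting to $\bR^d$ and writing $H$ as $V\oplus\Gamma$ with $V$ a subspace and $\Gamma$ discrete, the inclusion $H\supseteq\bZ^d$ forces $V$ rational and $\Gamma$ to contain the image of $\bZ^d$ with finite index. Hence a finite union of closed subgroups is a finite union of cosets of subtori, and it suffices to establish the descending chain condition for such finite unions, which I would do by induction on the maximal dimension appearing. Given $Z_1\supseteq Z_2\supseteq\cdots$, the dimensions are eventually equal to some $\delta$, say from $Z_{n_0}$ on; the union $Z_n^{\mathrm{top}}$ of the $\delta$-dimensional cosets contained in $Z_n$ is non-increasing and, for $n\ge n_0$, drawn from the finitely many $\delta$-dimensional cosets of $Z_{n_0}$, so it stabilizes to some $W$; writing $Z_n=W\cup Z_n^{\flat}$ with $Z_n^{\flat}$ the union of the finitely many maximal cosets of $Z_n$ of dimension $<\delta$, one checks such a coset must lie inside $Z_n^{\flat}$ rather than inside a coset of $W$ (otherwise it would fail to be maximal in $Z_{n+1}$), so $Z_{n+1}^{\flat}\subseteq Z_n^{\flat}$; by the inductive hypothesis $(Z_n^{\flat})$, and hence $(Z_n)$, stabilizes, the base case $\delta=0$ being a descending chain of finite sets. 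The main obstacle throughout is the Diophantine estimate of the second paragraph — converting "$H$ avoids an open set" into a bound on the shortest vector of $H^{\perp}$; the $\ell^1$ Fourier bound sketched there yields a $C$ polynomial in $1/\rho$, which suffices, while the optimal linear dependence would instead use the transference theorems of the geometry of numbers. A minor secondary point is the bookkeeping in the induction for (1), where one passes through "a subtorus plus finitely many translates" rather than through honest subtori, at the cost of only a finite amount of extra case-splitting.
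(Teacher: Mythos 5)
The paper does not prove this statement at all — it is quoted from Ambro and Lawrence — so there is no in-paper argument to compare against, and I am assessing your proof on its own terms. Your Fourier-analytic Diophantine lemma (a closed subgroup missing a ball $B(x_0,\rho)$ has a nonzero vector of norm $\le C(\rho,d)$ in its annihilator) is correct and is indeed the right quantitative input. Your proof of part (2) is also correct: the reduction to finite unions of cosets of rational subtori, the observation that a coset contained in a finite union of cosets must lie in a single one, and the induction on the top dimension via $Z_n=W\cup Z_n^{\flat}$ all check out.

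The gap is in the induction step of part (1). After placing $H$ inside $K_{b,m}=\chi_b^{-1}(\tfrac{1}{m}\bZ/\bZ)$, you intersect $H$ with the $m$ cosets of $T_b=\ker\chi_b$ and assert that this reduces to ``the same problem in dimension $d-1$ for the mildly larger class of cosets of closed subgroups, where the induction runs identically.'' It does not: the analogue of (1) for cosets is false. Already in $\bT^1$ with $U=(0,\epsilon)$ and $1/3<\epsilon<1/2$, no coset of $\tfrac{1}{m}\bZ/\bZ$ with $m\ge 3$ misses $U$, so every two-point coset $\{x,x+\tfrac12\}$ with $x\notin U\cup(U-\tfrac12)$ is a \emph{maximal} coset missing $U$ — uncountably many of them. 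Two further problems compound this: the open set $U\cap(c_j+T_b)$ may be empty, in which case that slice imposes no constraint whatsoever; and the slices $H\cap(c_j+T_b)$ are all translates of the single subgroup $H\cap T_b$, so they cannot be enlarged or classified independently — even if each slice did sit inside one of finitely many maximal cosets of its fiber, the coset representatives carry continuous parameters and this would not confine $H$ to finitely many subgroups of $\bT^d$. A correct descent must retain the group structure, e.g.\ by proving the stronger statement that every closed $H$ missing $U$ is contained in a closed subgroup $H'$ missing $U$ whose annihilator is generated by vectors of norm $\le C(U,d)$, or by running the induction over compact abelian Lie groups of the form $\bT^e\times F$ with $F$ finite rather than over cosets; as written, part (1) is not proved.
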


\begin{lemma}\hspace{1em}
  $\delta\text{-}L_{s,\ell}$ satisfies the descending chain condition. 
\end{lemma}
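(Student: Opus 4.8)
The plan is to reduce the statement to part (2) of Theorem~\ref{thm_subgroups_in_real_torus} by encoding each element of $\delta\text{-}L_{s,\ell}$ via the data of the point $x\in(0,1)^s\cap\mb Q^s$ and, in turn, via a finite union of closed subgroups of a real torus built from the functions $\psi_\ell(mx)$ and $\psi_s(mx)$ as $m$ ranges over $\bZ$. First I would observe that, for fixed $x$, the quantities $\psi_\ell(mx)$ and $\psi_s(mx)$ depend only on the image $\bar x$ of $x$ in $R:=\bR^s/\bZ^s$, and more precisely only on $m\bar x\in R$; condition (2c) is the requirement that $m\bar x$ avoids a certain open subset $U_t\subseteq R$ for every $m\in\bZ$, where $t=t_{s,\ell,\delta}(x)$ and
\[U_t=\{\,y\in R\mid t\,\psi_\ell(y)+(1-t)\psi_s(y) < \bigl(1-t+\iota_{s,\ell}(y)t\bigr)\delta\,\}.\]
(Here $\psi_\ell,\psi_s$ and $\iota_{s,\ell}$ descend to well-defined functions on $R$.) Thus for each value $t$ the set of admissible $\bar x$ with $t_{s,\ell,\delta}(x)=t$ is contained in the union of those closed subgroups $\overline{\langle \bar x\rangle}$ that miss $U_t$, and by part (1) of Theorem~\ref{thm_subgroups_in_real_torus} there are only finitely many maximal such subgroups.

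Next I would run the DCC argument itself. Suppose for contradiction that $t^{(1)}>t^{(2)}>\cdots$ is a strictly decreasing sequence in $\delta\text{-}L_{s,\ell}$, realized by points $x^{(j)}$, and let $\bar x^{(j)}\in R$ be their images and $G_j:=\overline{\langle\bar x^{(j)}\rangle}$ the closed subgroups they generate. The key monotonicity point is that the open sets $U_t$ are nested: since $\psi_\ell(y)\le \psi_s(y)$ always and the correction $\iota_{s,\ell}(y)$ only helps, one checks that $t\mapsto U_t$ is monotone in the appropriate direction, so that $t^{(j)}>t^{(j+1)}$ forces (after reindexing or passing to the relevant interval) a containment $U_{t^{(j+1)}}\subseteq U_{t^{(j)}}$ or at least that the subgroups $G_j$ all avoid a common fixed open set $U_{t_\infty}$ where $t_\infty=\lim t^{(j)}$. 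Hence all $G_j$ lie among the finitely many maximal closed subgroups of $R$ avoiding $U_{t_\infty}$, so infinitely many $G_j$ coincide, say $G_j=G$ for $j$ in an infinite set $J$. But on a fixed closed subgroup $G$ of $R$, the function sending a topological generator $\bar x$ of $G$ to $t_{s,\ell,\delta}(x)$ takes only finitely many values: $\psi_\ell$ and $\psi_s$ restricted to $G$ are piecewise-linear with finitely many pieces determined by $G$ alone, and the admissible points together with conditions (2a),(2b) pin down $t_{s,\ell,\delta}(x)$ from these finitely many local linear data (this uses that $t_{s,\ell,\delta}$ is an explicit rational expression in $\psi_s(x)$ and $\psi_s(x)-\psi_\ell(x)$, each of which is locally affine on $G$ and bounded). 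This contradicts strict monotonicity of $(t^{(j)})_{j\in J}$.

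The main obstacle I anticipate is the last step: controlling the set of values $\{t_{s,\ell,\delta}(x)\mid \overline{\langle\bar x\rangle}=G\}$ for a fixed subgroup $G$, and more fundamentally setting up the monotonicity so that a single open set $U_{t_\infty}$ (or a nested family) governs the whole tail of the sequence. One has to be careful that $\psi_s(x)$ and $\psi_\ell(x)$ are genuinely constant — or at least take finitely many values — on the relevant subset of $G$, which is where conditions (2a) and (2b) of Definition~\ref{def_delta_L} must be used, rather than just (2c); condition (2b), $\delta>\psi_s(x)$, is needed to keep $t_{s,\ell,\delta}(x)$ well-defined and bounded away from its pole. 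An alternative, cleaner route that avoids some of this bookkeeping is to apply part (2) of Theorem~\ref{thm_subgroups_in_real_torus} directly: attach to each $t\in\delta\text{-}L_{s,\ell}$ the finite union of closed subgroups $\bigcup\{G\subseteq R\text{ maximal}: G\cap U_t=\emptyset\}$, show this assignment is (weakly) monotone in $t$ and injective enough to detect strict changes in $t$, and conclude DCC for $\delta\text{-}L_{s,\ell}$ from DCC for finite unions of closed subgroups. I would pursue this second route if the direct generator-counting argument becomes unwieldy.
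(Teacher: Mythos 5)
Your overall framework is right --- encode each $t\in\delta\text{-}L_{s,\ell}$ by a closed subgroup of $R=\bR^s/\bZ^s$ that avoids an open set, and invoke Theorem~\ref{thm_subgroups_in_real_torus} --- and this is indeed the skeleton of the paper's proof. But as written there are two genuine gaps.

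First, the open set you define,
$U_t=\{y\in R\mid t\psi_\ell(y)+(1-t)\psi_s(y)<(1-t+\iota_{s,\ell}(y)t)\delta\}$,
contains $0$: at $y=0$ we have $\psi_\ell(0)=\psi_s(0)=0$ and $\iota_{s,\ell}(0)=1$, so the inequality reads $0<\delta$, which is true. Since $0$ lies in every closed subgroup, no closed subgroup avoids your $U_t$, and the whole mechanism collapses. The paper sidesteps this by building the open set not from the $\delta$-lc inequality in (2c) but from the threshold function itself: $U_m=\{y\mid \iota_{s,\ell}(y)=0,\ \delta>\psi_s(y),\ t_{s,\ell,\delta}(y)>t_m\}$, and the constraint $\iota_{s,\ell}(y)=0$ (i.e.\ $\psi_\ell(y)<\psi_s(y)$) in particular excludes $0$. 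The equivalence between ``(2c) holds at $t$'' and ``$t_{s,\ell,\delta}(y)\le t$ for all $y$ in the cyclic subgroup with $\iota_{s,\ell}(y)=0$ and $\psi_s(y)<\delta$'' is the observation that makes this work; you need to extract that reformulation explicitly rather than copying (2c) verbatim.

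Second, your main argument relies on the claim that on a fixed closed subgroup $G$ the function $\bar x\mapsto t_{s,\ell,\delta}(x)$ takes finitely many values, which you yourself flag as the chief obstacle and do not prove; it is also unnecessary. The correct closing move --- which you gesture at in your ``alternative cleaner route'' but leave unexecuted --- is: with $U_m$ as above, $U_m$ is increasing in $m$ (since $t_m$ is strictly decreasing), so the finite unions $X_m$ of maximal closed subgroups avoiding $U_m$ form a decreasing chain; moreover $\bar x^{(m)}$ lies in $X_m$ (its cyclic subgroup avoids $U_m$) but lies in $U_{m+1}$ (because $t_{s,\ell,\delta}(x^{(m)})=t_m>t_{m+1}$), so $\bar x^{(m)}\notin X_{m+1}$. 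Hence $X_m\supsetneq X_{m+1}$ for all $m$, contradicting Theorem~\ref{thm_subgroups_in_real_torus}(2). No counting of values on a fixed $G$ is needed. Your proposal has the right ingredients but stops short of the one step that actually closes the argument, and the open set as you wrote it must be corrected before any of it goes through.
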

\begin{proof}
    Suppose on the contrary that there exists a strictly decreasing sequence $\{t_m\}_{m\in\mb N}$ in $\delta\text{-}L_{s,\ell}$. 
    By the definition of $\delta\text{-}L_{s,\ell}$, we can write  
    \begin{itemize}
        \item $t_m=t_{s,\ell,\delta}(x^{(m)})$ for some $x^{(m)}=(x^{(m)}_1,\ldots,x^{(m)}_s)\in (0,1)^s\cap\mb Q^s$ with $\delta>\psi_s(x^{(m)})$ and \[\on{index}(x^{(m)}_i)\mid\on{index}(x^{(m)}_1,\ldots,\widehat{x^{(m)}_i},\ldots,x^{(m)}_s)\] 
        for each $i$.
        \item If $N_m\subset \mb Q^s$ is the lattice generated by $\mb Z^s$ and $x^{(m)}$, then for all $y\in N_m$ with $\iota_{s,\ell}(y)=0$ and $\delta > \psi_s(y)$, we have $t_{s,\ell,\delta}(x^{(m)})\geq t_{s,\ell,\delta}(y)$ by Definition~\ref{def_delta_L}(2c). 
    \end{itemize}
    
    Let $R=\mb R^s/\mb Z^s$ and $U_m\subset R$ be the non-empty open subset defined by
    \[
    U_m=\{y=(y_1,\ldots,y_s)\in\mb R^s/\mb Z^s\mid \iota_{s,\ell}(y)=0,\ \delta> \psi_s(y),\ \text {and}\ t_{s,\ell,\delta}(y)>t_{s,\ell,\delta}(x^{(m)})\}.
    \]
    Then we have $N_m\cap U_m=\emptyset$.
    Let $X_m$ be the union of closed subgroups of $R$ which do not intersect $U_m$. Then
    $\{X_m\}_{m\in\bN}$ is a decreasing sequence of unions of finitely many closed subgroups of $R$ by Theorem~\ref{thm_subgroups_in_real_torus}(1) and the fact that $\{U_m\}$ is increasing. 
    We have $x^{(m)}\in X_m$ as $x^{(m)}$ is in the closed subgroup $N_m$ of $R$ which does not intersect $U_m$. On the other hand, we have 
    $\iota_{s,\ell}(x^{(m)})=0$, $\delta>\psi_s(x^{(m)})$, and  $t_{s,\ell,\delta}(x^{(m)})=t_m>t_{m+1}=t_{s,\ell,\delta}(x^{(m+1)})$, 
    which implies that $x^{(m)}\in U_{m+1}$ and hence $x^{(m)}\not\in X_{m+1}$. That is, $\{X_m\}$ is strictly decreasing, contradicting Theorem~\ref{thm_subgroups_in_real_torus}(2).
\end{proof}

\begin{theorem}
    For $\delta\in(0,1]$, we have 
    $\dlctl_{n,r}^{\on{tor}}=\{0\}\cup\bigcup_{1\leq s\leq n,\,0\leq\ell\leq \min\{r,s\}}\delta\text{-}L_{s,\ell}$.
\end{theorem}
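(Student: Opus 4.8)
The plan is to reduce everything to affine toric varieties, translate $\delta$-log canonicity into a finite system of inequalities that are affine in $t$, read off a closed formula for the interpolated threshold, and then match that formula termwise against Definition~\ref{def_delta_L}. For \emph{Step 1}, by Proposition~\ref{prop_fan_to_cone} we may work on an affine $X=U_\sigma$, and $\bQ$-factoriality makes $\sigma$ simplicial; let $s=\dim\sigma$, let $v_1,\dots,v_s$ be its primitive ray generators, set $N_\sigma=N\cap\operatorname{span}_\bR\sigma$, and reindex so that $v_1,\dots,v_\ell\in W$ and $v_{\ell+1},\dots,v_s\notin W$, so $0\le\ell\le\min\{r,s\}$. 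Since $K_{\cF_W}=-\sum_{i\le\ell}D_{v_i}$ and $K_X=-\sum_{i\le s}D_{v_i}$, the support function of $K_t=tK_{\cF_W}+(1-t)K_X$ is linear on $\sigma$ with value $1$ at $v_i$ for $i\le\ell$ and $1-t$ at $v_i$ for $i>\ell$. Call a nonzero lattice point of $\sigma$ whose $v_i$-coordinates all lie in $[0,1)$ a \emph{box point} $p$, so that $\psi_\ell(p)=\sum_{i\le\ell}p_i$ and $\psi_s(p)=\sum_{i\le s}p_i$ in the notation of Definition~\ref{def_delta_L}; every primitive lattice point of $\sigma$ is such a $p$ (or $0$) plus a non-negative integer combination of the $v_i$, and since $\phi_{K_t}$ only grows under such additions while membership in $W$ is only relaxed by them, Lemma~\ref{lem:delta_lc_combinatorial} shows that $(U_\sigma,\cF_W,t)$ is $\delta$-lc iff $t\psi_\ell(p)+(1-t)\psi_s(p)\ge\delta$ for each box point $p\in W$ and $\ge(1-t)\delta$ for each box point $p\notin W$. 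This is a finite system affine in $t$, so the set of admissible $t$ is a closed subinterval $J\subseteq[0,1]$; hence $\dlctl(U_\sigma,\cF_W)=\min J$ when $J\ne\emptyset$ (otherwise the pair contributes no value), and solving the inequalities gives $\min J=\max\bigl(\{0\}\cup\{\tfrac{\delta-\psi_s(p)}{\delta-(\psi_s(p)-\psi_\ell(p))}:p\notin W,\ \psi_s(p)<\delta\}\bigr)$.

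For the inclusion $\supseteq$, note first that $X$ smooth affine with $W$ a coordinate subspace makes $t=0$ admissible, so $0\in\dlctl_{n,r}^{\on{tor}}$. Given $x\in\delta\text{-}V_{s,\ell}$ with $1\le s\le n$ and $0\le\ell\le\min\{r,s\}$, put $N=\bZ^n+\bZ(x_1e_1+\dots+x_se_s)$, $\sigma=\operatorname{Cone}(e_1,\dots,e_s)$, and $W=\langle e_1,\dots,e_\ell\rangle_\bC\oplus V$ where $V\subseteq N_\bC$ is an $(r-\ell)$-dimensional subspace with algebraically independent transcendental coordinates chosen so that $W\cap N_\bQ=\langle e_1,\dots,e_\ell\rangle_\bQ$; such $V$ exists precisely because $r<n$ (take $V=0$ if $r=\ell$). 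Then $\cF_W$ is a rank-$r$ toric foliation on the $n$-dimensional $\bQ$-factorial $U_\sigma$, its box points are $(\{mx_1\},\dots,\{mx_s\},0,\dots,0)$ with $mx\notin\bZ^s$, such a point lies in $W$ iff $\iota_{s,\ell}(mx)=1$, and the inequalities of Step~1 become exactly condition (2c) of Definition~\ref{def_delta_L}; together with (2b) and the formula of Step~1 this yields $\dlctl(U_\sigma,\cF_W)=t_{s,\ell,\delta}(x)$.

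For the inclusion $\subseteq$, if $v_0:=\dlctl(U_\sigma,\cF_W)=0$ it is in the right-hand side; assume $v_0>0$, so by Step~1 $v_0=\tfrac{\delta-\psi_s(p_0)}{\delta-(\psi_s(p_0)-\psi_\ell(p_0))}$ for a box point $p_0\notin W$ with $\psi_s(p_0)<\delta$. The plan is to normalise $(N_\sigma,\sigma,W)$ to the shape defining $\delta\text{-}L$ without moving the threshold, in three moves: (i) restrict to the minimal face $\tau\preceq\sigma$ with $p_0\in\operatorname{Relint}\tau$, of dimension $s'$ and with $\ell'\le\min\{r,s'\}$ of its rays in $W$ — the box points of $N_\tau$ sit inside those of $N_\sigma$, so this retains the active constraint and only discards others; (ii) shrink $N_\tau$ to $N'=(\bigoplus\bZ v_i)+\bZ p_0$, whose box points are those of $N_\tau$ lying in the cyclic subgroup generated by $p_0$, still containing $p_0$; (iii) replace $W\cap(N')_\bC$ by the coordinate subspace $\langle e_1,\dots,e_{\ell'}\rangle_\bC$ — in the feasible case every box point of $W\cap(N')_\bC$ outside this subspace has $\psi_{s'}\ge\delta$ (else feasibility fails at $t=v_0$) and hence yields a non-positive lower bound, leaving $v_0$ unchanged. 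After these moves $\dlctl$ equals $t_{s',\ell',\delta}(p_0)$, and $p_0\in\delta\text{-}V_{s',\ell'}$: (2a) because the $v_i$ are primitive in $N'$, (2b) because $\psi_{s'}(p_0)<\delta$, and (2c) because the normalised variety is $\delta$-lc at $t=t_{s',\ell',\delta}(p_0)$. Thus $v_0\in\delta\text{-}L_{s',\ell'}$.

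The genuine difficulty is Step~3(iii) together with the feasibility bookkeeping: because $W$ may be strictly larger than the span of the rays it contains, one must check that no lattice point of $\sigma$ lying in this ``extra'' part of $W$ ever tightens the threshold, and that all three normalisations preserve $\dlctl$ exactly rather than merely up to inequality; the boundary values $v_0\in\{0,1\}$ (with $\delta\text{-}L_{s,0}=\{1\}$) must also be tracked separately. The transcendentality device of Step~2 is exactly what keeps these ``is this lattice point in $W$?'' questions controllable, and correctly identifying the active box point and the face it sits in is the combinatorial heart of the argument.
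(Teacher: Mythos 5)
Your proposal follows essentially the same route the paper takes: reduce to the affine case via Proposition~\ref{prop_fan_to_cone}, derive from Lemma~\ref{lem:delta_lc_combinatorial} a closed ``max'' formula for $\dlctl(U_\sigma,\cF_W)$ as a ratio of support-function values at primitive lattice points outside $W$, construct the $\supseteq$ examples by extending a rational coordinate subspace with a generic (transcendental) complement, and for $\subseteq$ extract an $x\in\delta\text{-}V_{s,\ell}$ from the lattice point realizing the maximum. The structural pieces you highlight (reduction to the fractional box, re-indexing to a minimal face, restriction to the sub-lattice generated by $\bZ^s$ and the active point) are all present in the paper's argument, just packaged differently (the paper establishes ``coordinates in $(0,1)^s$'' by the case analysis in its second claim, which also catches the edge case $L=1\in\delta\text{-}L_{s,0}$, rather than by a separate box-point reduction).

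One wording caution. The ``iff'' in your Step~1 is not literally true: a primitive non-box point $v=p+\sum c_iv_i$ can lie in $W$ even when its box part $p$ does not, and the resulting constraint for $v$ (lower bound $\delta$ rather than $(1-t)\delta$) is not a consequence of the box-point system. This does not damage your conclusion, because the support function value at any $v\in W\cap\sigma$ is non-increasing in $t$, so every such constraint is an upper bound on $t$ (or $t$-independent) and hence never moves $\min J$; but as stated your equivalence is an overstatement, and the claim should be that the box-point inequalities for $p\notin W$ determine the lower endpoint of the admissible interval. Your normalization move~(iii) (replacing $W\cap(N')_\bC$ by a coordinate subspace) is likewise not needed: condition~(2c) of Definition~\ref{def_delta_L} is a statement about $x$ alone and follows directly from $\delta$-log canonicity at $t=\dlctl$ applied to the box points of $\bZ^s+\bZ x$, using $\operatorname{span}(v_1,\ldots,v_\ell)\subseteq W$, which is how the paper closes the $\subseteq$ direction without altering $W$.
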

\begin{proof}
    ($\supseteq$) Note that when $N=\bigoplus_{i=1}^n\bZ e_i$, $\sigma=\on{Cone}(e_1,\ldots, e_n)$, and $W=\sum_{i=1}^r\bC e_i$, we have $\dlctl(U_{\sigma,N},\cF_W)=0$
    So $0\in\dlctl_{n,r}^{\on{tor}}$.
    
    Fix $s$, $\ell$ such that $1\leq s\leq n$ and $0\leq\ell\leq \min\{r,s\}$.
    A number is in $\delta\text{-}L_{s,\ell}$ if and only if it is of the form $t_{s,\ell,\delta}(x)$ where $x=(x_1,\ldots,x_s)\in (0,1)^s\cap\mb Q^s$ satisfies the requirements in Definition~\ref{def_delta_L}(2).
    Let $N=\mb Ze_1+\cdots+\mb Ze_n$ be a lattice, $\sigma=\on{Cone}(e_1,\ldots,e_s)$, and $N'$ be the lattice generated by $N$ and $x_1e_1+\cdots+x_se_s$. 
    Let $W\subseteq N_\bC$ be an $r$-dimensional vector subspace such that $W\cap N=\bZ e_1+\cdots+\bZ e_\ell$. 
    Then one can check that $(U_{\sigma,N'},\cF_W,t_{s,\ell,\delta}(x))$ is $\delta$-lc and $\dlctl(U_{\sigma,N'},\cF_W)=t_{s,\ell,\delta}(x)$.

    ($\subseteq$) Let $N=\mb Ze_1+\cdots+\mb Ze_n$ be a lattice, $\sigma=\on{Cone}(v_1,\ldots,v_m)\subseteq N_\mb R$ be a rational simplicial cone of dimension $m$ with each $v_i$ being primitive, and $W\subseteq N_\mb C$ be vector subspace of dimension $r$. 
    Suppose that $(X:=U_{\sigma},\cF:=\cF_W,t)$ is $\delta$-lc for some $t\in[0,1]$. We have 
    \[t\phi_{K_{\cF}}(v)+(1-t)\phi_{K_X}(v)\geq(1-t+\iota(v)t)\delta\] 
    for all primitive vectors $v\in\sigma\cap N$, where $\iota(v):=1$ if $v\in W$ and $\iota(v):=0$ otherwise. Then
    \[(\phi_{K_{\cF}}(v)-\phi_{K_X}(v)+\delta-\iota(v)\delta)t\geq \delta-\phi_{K_X}(v).\]
    Note that if $\iota(v)=1$, then we have $(\phi_{K_{\cF}}(v)-\phi_{K_X}(v)+\delta-\iota(v)\delta)\leq 0$. 
    Let \[S=\{v\in\sigma\cap N\mid v\textnormal{ is primitive, } \iota(v)=0, \textnormal{ and } \delta>\phi_{K_X}(v)\}.\]
    Note that $S$ is a finite set. 
    Let $L=\max\{0,\frac{\delta-\phi_{K_X}(v)}{\delta-(\phi_{K_X}(v)-\phi_{K_{\cF}}(v))}\mid v\in S\}$. 
     
    \begin{claim}
        $\dlctl(U_\sigma,\cF_W)=L$.
    \end{claim}
    \begin{claimproof}
        If $\dlctl(U_\sigma,\cF_W)=0$, then $\phi_{K_X}(v)\geq \delta$ for all primitive $v\in\sigma\cap N$. 
        Thus for all $v\in S$, we have 
        \[\frac{\delta-\phi_{K_X}(v)}{\delta-(\phi_{K_X}(v)-\phi_{K_{\cF}}(v))}\leq 0\] 
        and hence $L=0$. 

        If $t=\dlctl(U_\sigma,\cF_W)>0$, then for any $\varepsilon>0$, there is a primitive vector $v\in\sigma\cap N$ such that $(\phi_{K_{\cF}}(v)-\phi_{K_X}(v)+\delta-\iota(v)\delta)t\geq \delta-\phi_{K_X}(v)$ but $(\phi_{K_{\cF}}(v)-\phi_{K_X}(v)+\delta-\iota(v)\delta)(t-\varepsilon) < \delta-\phi_{K_X}(v)$. 
        Thus $\phi_{K_{\cF}}(v)-\phi_{K_X}(v)+\delta-\iota(v)\delta>0$ and hence $v\in S$. 
        So $L\geq \frac{\delta-\phi_{K_X}(v)}{\delta-(\phi_{K_X}(v)-\phi_{K_{\cF}}(v))}>t-\varepsilon$ for all $\varepsilon>0$. 
        Then $L\geq t$. 
        It is clear that $L\leq t$. 
        This completes the proof of the claim. $\quad\blacksquare$
    \end{claimproof}
    
    If $L=0$, then we are done. 
    Suppose that 
    \[L=\frac{\delta-\phi_{K_X}(\tilde v)}{\delta-(\phi_{K_X}(\tilde v)-\phi_{K_{\mc F_W}}(\tilde v))}>0\] 
    for some $\tilde{v}\in S$. 
    After re-indexing, we may assume that $\tilde v\in\on{Relint}(\on{Cone}(v_1,\ldots,v_s))$ and 
    \[\{v_1,\ldots,v_s\}\cap W=\{v_1,\ldots,v_\ell\}\] 
    for some $s$, $\ell$ with $1\leq s\leq m$ and $0\leq\ell\leq \min\{r,s\}$.
    Write $\tilde v=x_1v_1+\cdots+x_sv_s$ and let $x=(x_1,\ldots,x_s)\in\bQ^s_{>0}$. Now we claim that $x\in (0,1)^s\cap\mb Q^s$. 
    Assuming the claim, it is easy to see that $L\in \delta\text{-}V_{s,\ell}$. 
    \begin{claim} $x\in (0,1)^s\cap\mb Q^s$. \end{claim}
    \begin{claimproof}
        Suppose $x_k\geq 1$ for some $k\in\{1,\ldots,s\}$. 
        We have three cases:
        \begin{enumerate}
            \item If $k\leq \ell$, then $v'=\tilde v-v_k\in\sigma\cap N$ is not contained in $W$ as $\iota(\tilde{v})=0$. 
            Thus \[\phi_{K_X}(v')-\phi_{K_{\cF}}(v')=\phi_{K_X}(\tilde v)-\phi_{K_{\cF}}(\tilde v)<\delta\] 
            and therefore 
            \[\frac{\delta-\phi_{K_X}(v')}{\delta-(\phi_{K_X}(v')-\phi_{K_{\cF}}(v'))}>\frac{\delta-\phi_{K_X}(\tilde v)}{\delta-(\phi_{K_X}(\tilde v)-\phi_{K_{\cF}}(\tilde v))},\]
            contradicting the maximality of $L$. 

            \item If $k\geq\ell+1$ and $v'=\tilde v-v_k\in W$, then by maximality of $L$, we have 
            \begin{align*}
                & {} (\phi_{K_{\cF}}(v')-\phi_{K_X}(v')+\delta-\iota(v')\delta)L\\ 
                &=(\phi_{K_{\cF}}(\tilde v)-\phi_{K_X}(\tilde v)+1)\frac{\delta-\phi_{K_X}(\tilde v)}{\delta-(\phi_{K_X}(\tilde v)-\phi_{K_{\cF}}(\tilde v))} \\
                &\geq \delta-\phi_{K_X}(\tilde v)+1.  
            \end{align*}
            Hence $0\geq \delta^2-\delta\phi_{K_X}(\tilde v)+\phi_\cF(\tilde v)=\delta(\delta-\phi_{K_X}(\tilde v))+\phi_{\cF}(\tilde v)>0$, which is impossible. 

            \item If $k\geq\ell+1$ and $v'=\tilde v-v_k\notin W$, then 
            the similar computation shows that $0\geq \phi_{\cF}(\tilde v)$, which implies that $\ell=0$. 
            Thus, $L=1\in\delta\textnormal{-}L_{s,0}$. $\quad\blacksquare$
        \end{enumerate}
     \end{claimproof}
\end{proof}

\begin{theorem}\label{thm:lower_delta_lct_dcc}
    $\dlctl_{n,r}^{\on{tor}}$ satisfies the descending chain condition.
\end{theorem}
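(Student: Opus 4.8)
The plan is to deduce this directly from the two results established just above: the structure theorem identifying $\dlctl_{n,r}^{\on{tor}}$ with a finite union, together with the descending chain condition for each of its pieces. So almost no new work should be needed.

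First I would fix $\delta\in(0,1]$ (the range relevant to the preceding two results). By the preceding theorem,
\[
\dlctl_{n,r}^{\on{tor}} = \{0\} \cup \bigcup_{\substack{1\leq s\leq n\\ 0\leq \ell\leq \min\{r,s\}}} \delta\text{-}L_{s,\ell},
\]
and the index set $\{(s,\ell)\}$ is finite (of cardinality at most $n(r+1)$), so the right-hand side is a union of finitely many subsets of $[0,1]$.

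Next, by the preceding lemma each $\delta\text{-}L_{s,\ell}$ satisfies the descending chain condition, and the singleton $\{0\}$ trivially does. It then remains only to invoke the elementary fact that a finite union of subsets of $\bR$, each satisfying the descending chain condition, again satisfies it: given a non-increasing sequence $t_1\geq t_2\geq\cdots$ in the union, by the pigeonhole principle one of the finitely many members contains $t_m$ for infinitely many $m$, and the descending chain condition for that member forces that subsequence, hence the whole sequence, to stabilize. This finishes the argument.

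I do not anticipate any genuine obstacle at this stage, since the entire content of the statement is concentrated in the structure theorem and in the lemma on $\delta\text{-}L_{s,\ell}$, whose proof in turn rested on Theorem~\ref{thm_subgroups_in_real_torus} (finiteness of the maximal closed subgroups of a real torus avoiding a fixed open set, and the descending chain condition for finite unions of closed subgroups). The only care needed here is bookkeeping: checking that the index set of the union is finite and that the passage from ``each piece is DCC'' to ``the union is DCC'' is stated correctly, neither of which presents difficulty.
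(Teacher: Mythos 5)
Your proposal is correct and matches the paper's own argument exactly: the paper likewise observes that $\dlctl_{n,r}^{\on{tor}}$ is a finite union of sets each satisfying the descending chain condition, hence itself satisfies it. Your explicit pigeonhole justification of the ``finite union of DCC sets is DCC'' step is slightly more detailed than the paper's one-line proof, but the route is identical.
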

\begin{proof}
    It is clear since $\dlctl_{n,r}^{\on{tor}}$ is a union of finitely many sets, each of which satisfies the descending chain condition.
\end{proof}

\appendix
\section{Toroidal foliations}
In this section, we recall the definition of toroidal foliations, which is first given in \cite{CC}. 
\begin{definition}[{Toroidal foliated pairs, \cite[Definition 4.4]{CC}}]\label{defn_toroidal}
    \begin{enumerate}
        \item A foliation $\cF$ on a normal variety $X$ is \emph{toroidal} if it is formally locally toric adapted to $\Xi$ for some toroidal embedding $(X\setminus\Xi)\hookrightarrow X$. 
        That is, there exists a reduced divisor $\Xi$ on $X$ such that for any closed point $x\in X$, there exist 
        \begin{itemize}
            \item a lattice $N$, 
            \item a rational, strongly convex, polyhedral cone $\sigma\subseteq N_\bR$, 
            \item a closed point $p\in U_\sigma$, 
            \item a complex vector subspace $W_p\subseteq N_\bC$, and 
            \item an isomorphism of complete local algebras $\psi_x\colon \widehat{\cO}_{X,\,x}\cong\widehat{\cO}_{U_\sigma,\,p}$, whose induced morphism $\diff\psi_x$ on the tangent sheaves maps $\cF$ to $\cF_W$. 
        \end{itemize} 
    The divisor $\Xi$ is called the \emph{associated reduced divisor} for the toroidal foliation $\cF$. 

    We say $(U_\sigma,p,W_p)$ is a \emph{semi-local model} of $\mathcal F$ at $x$ if $\sigma$ is a top-dimensional cone, $p$ is the torus-invariant point, and $\psi_x$ maps the ideal of $\Xi$ to the ideal of some torus-invariant divisor in $U_\sigma$. 

    We say $(U_\sigma,p,W_p)$ is a \emph{local model} of $\mathcal F$ at $x$ if $p$ is in the orbit $\cO_\sigma$ and $\psi_x$ maps the ideal of $\Xi$ to the ideal of $U_\sigma\setminus T_\sigma$ where $T_\sigma$ is the maximal torus in $U_\sigma$. 

    A \emph{stratum} of $\Xi$ is a closed subvariety $Z$ of $\Xi$ such that, 
    near any $z\in Z$, $Z$ is formally locally a stratum of $U_\sigma\setminus T_\sigma$ for some local model $(U_\sigma,p,W_p)$ of $\mathcal F$ at $z$. A \emph{stratum} of $\on{Sing}(\mc F)$ is defined in the same way.

    \item
    We say a foliated pair $(\cF,\Delta)$ on a normal variety $X$ is \emph{toroidal} if $\mc F$ is toroidal with associated reduced divisor $\Xi$ and $\on{Supp}(\Delta)\subseteq \Xi$. 
    Let $(U_\sigma,p,W_p)$ be a local model of $\mc F$ at $x\in X$. Then there exists a unique torus invariant divisor $\Delta_p = \sum_{\rho\in\sigma(1)} a_\rho D_\rho$ such that in the formal neighborhood of $x\in X$, $\Delta$ is given by $\Delta_p$ via the isomorphism $\psi_x$. 
    The tuple $(U_\sigma,p,W_p,\Delta_p)$ is called a \emph{local model} of $(\cF,\Delta)$ at $x\in X$. 
    A semi-local model of $(\cF,\Delta)$ at $x\in X$ is defined in the similar way as in (1). 
    \end{enumerate}
\end{definition}

\subsection{\texorpdfstring{$\delta$-log canonical}{delta-log canonical}}
In this subsection, we show $\delta$-log canonicity of toroidal adjoint foliated structures can be check on its local models. 
\begin{lemma}\label{lem:toroidal_delta_lc_combinatorial}
    Let $(X,\cF,\Delta,t)$ be a toroidal foliated quadruple. 
    Then $(X,\cF,\Delta,t)$ is $\delta$-lc if and only if for any point $x\in X$ and any local model $(U_\sigma,p,W_p,\Delta_p)$ at $x\in X$, we have 
    \[\phi(v) \geq\begin{cases}
        (1-t)\delta & \mbox{if } v\notin W \\
        \delta & \mbox{if } v\in W
    \end{cases}\]
    for all primitive elements $v\in\sigma$ where $\phi$ is the support function of $tK_{\cF_{W_p}}+(1-t)K_{U_\sigma}+\Delta_p$. 
\end{lemma}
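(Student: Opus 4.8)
The plan is to reduce the toroidal statement to the purely toric combinatorial criterion. By definition of $\delta$-lc, the pair $(X,\cF,\Delta,t)$ is $\delta$-lc if and only if $a(E,\cF,\Delta,t)\geq (t\iota(E)+(1-t))\delta$ for every divisor $E$ over $X$. The key point is that this condition is \emph{local and formal}: the adjoint log discrepancy of $E$ is computed on a birational model extracting $E$, and near the center of $E$ this model, together with $\cF$ and $\Delta$, is formally isomorphic to a toric birational modification of some local model $(U_\sigma,p,W_p,\Delta_p)$, by the very definition of a toroidal foliated pair. So first I would record that every exceptional divisor $E$ over $X$, when restricted to a formal neighborhood of a point $x\in\overline{c_X(E)}$, corresponds to a torus-invariant or non-torus-invariant divisor over some local model at $x$, and conversely every divisor over a local model arises this way.

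\textbf{Matching discrepancies on both sides.} Next I would show that the adjoint log discrepancy $a(E,\cF,\Delta,t)$ equals the analogous quantity computed on the local model $(U_\sigma,p,W_p,\Delta_p)$. This is because $K_\cF$, $K_X$ and $\Delta$ all agree formally-locally with $K_{\cF_{W_p}}$, $K_{U_\sigma}$ and $\Delta_p$ under $\psi_x$, and the pullback relation $K_{(X',\cF',\Delta',t)}=\pi^*K_{(X,\cF,\Delta,t)}$ is preserved under completion; the invariance index $\iota(E)$ is also a formal-local invariant (Definition~\ref{defn_invariance} is formal-local in nature). Then I would invoke Proposition~\ref{prop:adjoint_log_discrepancy}: for a toric birational morphism to $U_\sigma$ and a ray $\rho$ not in $\sigma(1)$, the adjoint log discrepancy of $D_\rho$ equals $\phi(v_\rho)$, where $\phi$ is the support function of $tK_{\cF_{W_p}}+(1-t)K_{U_\sigma}+\Delta_p$. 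Combining, $(X,\cF,\Delta,t)$ is $\delta$-lc precisely when, for every local model and every primitive $v\in\sigma$ (equivalently every ray over $\sigma$), we have $\phi(v)\geq\delta$ when $v\in W_p$ and $\phi(v)\geq(1-t)\delta$ when $v\notin W_p$ — noting that $\iota$ of the corresponding divisor is $1$ iff $v\in W_p$ by the cited description of invariant divisors. The primitive generators of $\sigma(1)$ themselves give the non-exceptional divisors and must also be checked, but these are already torus-invariant divisors on $U_\sigma$, and $\phi(v_\rho)=-\on{mult}_{D_\rho}(tK_{\cF_{W_p}}+(1-t)K_{U_\sigma}+\Delta_p)$ is exactly the required adjoint log discrepancy, so they fit the same formula.

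\textbf{The forward and backward implications.} With the dictionary in place, both directions are formal. If $(X,\cF,\Delta,t)$ is $\delta$-lc, then for any local model and any primitive $v\in\sigma$, the star subdivision along $v$ (or the identity, if $v$ generates a ray of $\sigma$) extracts a divisor $E$ over $X$ with $a(E,\cF,\Delta,t)=\phi(v)$ and $\iota(E)=1\iff v\in W_p$, so the inequality on $\phi(v)$ follows. Conversely, if the $\phi$-inequalities hold on every local model, then any divisor $E$ over $X$ is captured by some local model at a point of its center, where it corresponds to some $D_\rho$ on a toric modification of $U_\sigma$; since the toric modification can be dominated by one where $\rho$ is a ray, $a(E,\cF,\Delta,t)=\phi(v_\rho)$ satisfies the required bound. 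I expect the main obstacle to be the bookkeeping in the reduction step: carefully checking that $\iota(E)$, the multiplicities, and the pullback formula are genuinely invariant under the formal-local isomorphism $\psi_x$, and that "primitive elements $v\in\sigma$" on the local-model side corresponds exactly to "all divisors over $X$ whose center meets the stratum" — i.e., making sure no divisor over $X$ is missed and none is double-counted when $x$ ranges over $X$. Once that correspondence is nailed down, the rest is a direct translation through Proposition~\ref{prop:adjoint_log_discrepancy}.
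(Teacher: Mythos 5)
Your forward direction matches the paper's and is fine: for a primitive $v\in\sigma$, a weighted blow-up coming from the star subdivision along $v$ (via the formal-local isomorphism $\psi_x$) extracts a divisor $E$ over $X$ with $a(E,\cF,\Delta,t)=\phi(v)$, and $\iota(E)=1$ iff $v\in W_p$; Proposition~\ref{prop:adjoint_log_discrepancy} then gives the inequality.

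The backward direction, however, contains a genuine gap. You assert that ``any divisor $E$ over $X$ is captured by some local model at a point of its center, where it corresponds to some $D_\rho$ on a toric modification of $U_\sigma$,'' and then conclude $a(E,\cF,\Delta,t)=\phi(v_\rho)$. This is false for a general $E$: the toroidal structure only tells you that a formal neighborhood of $x$ is isomorphic to a formal neighborhood of a toric model, not that every valuation over that neighborhood is monomial with respect to the toric coordinates. Already in the honest toric case this fails --- blowing up a non-torus-invariant point of $\bA^2=U_\sigma$ produces an exceptional divisor whose log discrepancy is not $\phi(v)$ for any primitive $v\in\sigma$. Passing from ``$\phi$-inequalities on the cone'' to ``inequalities for \emph{all} divisors over $X$'' requires a separate argument even for toric varieties (one must show the infimum of log discrepancies is achieved by torus-invariant divisors); in the toroidal case it is more delicate still because the local models do not globalize.

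The paper handles exactly this issue. It first passes to a toroidal resolution $X'\to X$. Divisors with divisorial center on $X'$ are torus-invariant in some local model and the formula applies directly. For $E$ exceptional over $X'$, it invokes Zariski's lemma to realize $E$ by a chain of smooth blow-ups $X_k\to\cdots\to X_1=X'$ with each center $Z_i$ dominating $Z_1$, and then runs an induction: at a general point of each $Z_j$ it applies \cite[Lemma 4.15]{CC} to get a \emph{semi-local} model in which $Z_j$ and the preceding exceptional divisor $E_j$ are torus-invariant, computes $a(E_{j+1},\cF,\Delta,t)=\phi_j\bigl(\sum_{\rho\in\tau_j(1)}v_\rho\bigr)$, bounds it below by $\phi_j(v_{\rho_j})$, and uses non-dicriticality of the transformed foliation to control $\iota(E_{j+1})$. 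This inductive bookkeeping over the Zariski chain is the real content of the converse direction and is precisely what your proposal replaces with the unproved (and in general incorrect) claim that $E$ is torus-invariant in a local model.
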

\begin{proof}
    Suppose $(X,\cF,\Delta,t)$ is $\delta$-lc. 
    Let $(U_\sigma,p,W_p,\Delta_p)$ be any local model at $x\in X$. 
    For any primitive element $v$ in $\sigma$, there is a sequence of blow-up along some subvarieties dominating $S_\sigma$, which is the strata of $\Xi$ and is formally locally $V_\sigma$. 
    such that $\phi(v)$ is the adjoint log discrepancy of the exceptional divisor $E$. 

    Conversely, suppose the support function $\phi_t$ has the desired inequalities as above. 
    We first take a (toroidal) resolution of singularities $\pi\colon (X',\cF',\Delta',t) \to (X,\cF,\Delta,t)$. 
    For any exceptional divisor $E$ over $X$, we would like to show its adjoint log discrepancy 
    \begin{align}\label{ineq:t_delta_lc}
        a(E,\cF,\Delta,t) &\geq
        \begin{cases}
        (1-t)\delta & \mbox{if $E$ is foliation-invariant}\\
        \delta & \mbox{otherwise}.
    \end{cases}\end{align}
    If the center $Z'$ of $E$ on $X'$ is a divisor, then $a(E,\cF,\Delta,t) = \phi(v)$ for some local model $(U_\sigma,p,W_p,\Delta_p)$ at a general point of $c_X(E)$. 
    Otherwise, $E$ is exceptional over $X'$ and by Zariski lemma, we may assume $E$ is obtained from a sequence of blow-ups starting from $X'$. 
    Precisely, there are birational morphisms 
    \[X_k\xrightarrow{\pi_{k-1}}X_{k-1}\xrightarrow{\pi_{k-2}}\ldots\xrightarrow{\pi_1}X_1:= X'\] 
    where $\pi_i$ is the blow-up along $Z_i\subseteq X_i$ for $i\in [1,k-1]\cap\bN$, $Z_1=Z'$ and each $Z_i$ dominates $Z_1$. 
    Let $E_{i}=\on{Exc}(\pi_{i-1})\subseteq X_{i}$ for $i\in [2,k]\cap\bN$. 
    Note that $Z_i\subseteq E_i$ for $i\in [2,k-1]\cap\bN$. 

    \begin{claim}
        $a(E_i,\cF,\Delta,t)\geq (t\iota(E_i) + (1-t))\delta$ for any $i\in [2,k]\cap\bN$.  
    \end{claim}
    \begin{claimproof}
        It is clear when $\cF = \cT_X$ and $\cF=0$. 
        So we assume that $\cF$ is neither $\cT_X$ nor $0$. 
        We will proceed by induction on $i$. 
        For the base case $i=2$, if $Z_1$ is not contained in any stratum of $\Xi$, then $Z$ meets the smooth locus of $\cF$ and thus, $a(E_2,\cF,\Delta,t)\geq t(1)+(1-t)(2)+t\iota(E_2)+(1-t)\geq (t\iota(E_2)+(1-t))\delta$. 
        So we may assume $Z_1$ is contained in some strata of $\Xi = \sum_i\Xi_i$, say $Z_1\subseteq \Xi_1$.  
        By \cite[Lemma 4.15]{CC}, there is a semi-local model $(U_{\sigma_1},p_1,W_1,\Delta_1)$ at a general point of $Z_1$ such that $Z_1$ and $\Xi_1$ corresponds to $V_{\tau_1}$ and $D_{\rho_1}$ respectively where $\rho_1\preceq\tau_1\preceq\sigma_1$. 
        Then we have 
        \begin{align*}
            a(E_2,\cF,\Delta,t) &= a(E_2,\cF_1,\Delta_1,t) \\
            &= ta(E_2,\cF)+(1-t)a(E_2,X)-\on{mult}_{E_2}\pi_{1}^*\Delta \\
            &= \phi_1(\sum_{\rho\in\tau_1(1)}v_\rho) \\
            &\geq\phi_1(v_{\rho_1}) \\
            &\geq (t\iota(D_{\rho_1})+(1-t))\delta \\
            &\geq (1-t)\delta
        \end{align*}
        where the second equality holds as $X_1=X'$ is smooth 
        and $\phi_1$ is the support function of $K_{(X_1,\cF_1,\Delta_1,t)}$. 
        Thus, we obtain the inequality when $\iota(E_2)=0$. 
        If $\iota(E_2)=1$, then $\sum_{\rho\in\sigma_1(1)}v_{\rho}\in W$. 
        By the non-dicriticality of $\cF$, we have $\sigma_1\subseteq W$, in particular, $\rho_1\subseteq W$. 

        Now we assume that $a(E_i,\cF,\Delta,t)\geq (t\iota(E_i)+(1-t))\delta$ for $i\in [2,j]\cap\bN$ for some $j\in\bN_{\geq 2}$. 
        At a general point on $Z_{j}$, there is a semi-local model $(U_{\sigma_{j}},p_{j},W_{j},\Delta_{j})$ such that $Z_{j}$ and $E_{j}$ corresponds to $V_{\tau_{j}}$ and $\rho_{j}$ where $\rho_{j}\preceq\tau_{j}\preceq\sigma_{j}$. 
        Let $\phi_j\colon \sigma_j\to\bR$ be the support function of $K_{(U_{\sigma_j},p_j,W_j,\Delta_j)}$. 
        \begin{align*}
        a(E_{j+1},\cF,\Delta,t) &= a(E_{j+1},\cF_j,\Delta_j,t) \\
        &= ta(E_{j+1},\cF_{j})+(1-t)a(E_{j+1},X_{j})-\on{mult}_{E_{j+1}}\pi_{j}^*\Delta_{j} \\
        &= \phi_{j}(\sum_{\rho\in\tau_{j}(1)}v_\rho) \\
        &\geq \phi_j(v_{\rho_j}) \\
        &\geq (t\iota(D_{\rho_j})+(1-t))\delta \\
        &\geq (1-t)\delta
        \end{align*}
        Thus, we obtain the inequality when $\iota(E_{j+1})=0$. 
        If $\iota(E_{j+1})=1$, then $\sum_{\rho\in\sigma_j(1)}v_{\rho}\in W$. 
        By the non-dicriticality of $\cF_j$, we have $\sigma_j\subseteq W$, in particular, $\rho_j\subseteq W$. 
    \end{claimproof}
\end{proof}

\subsection{Dicritical locus}
In this subsection, we prove Lemma~\ref{lem:dicrit_locus}. 
\begin{proof}[{Proof of Lemma~\ref{lem:dicrit_locus}}]
    Note that We may assume $X_\Sigma$ is smooth. 
    If $\on{rank}\cF=n$, then $\cF=\cT_X$ is non-dicritical and $(\tau,W)$ is not dicritical for any $\tau\in\Sigma$. 

    Suppose $\on{rank}\cF=n-1$. 
    \begin{itemize}
        \item ($\supseteq$) Let $\tau\in\Sigma$ be a cone with $(\tau,W)$ being dicritical and $x\in\cO_\tau$. 
        We put $v_0\in\on{Relint}(\tau)\cap W\cap N$. 
        By \cite[Lemma 4.14]{CC}, there is a semi-local model $(U_\sigma,p,W_p)$ of $(X,\cF)$ at $x$ such that $W_p=W_p\cap\bC\tau+\sum_{j:e_j\in W}\bC e_j$. 
        As $(\tau,W)$ is dicritical, we have $e_j\in W$ for all $\bR_{\geq 0}e_j\npreceq \tau$. 
        Thus, $v'_0:=v_0+\sum_{j:e_j\in W, \bR_{\geq 0}e_j\npreceq\tau}e_j\in\on{Relint}(\sigma)\cap W_p\cap N$. 
        We do a weighted blow-up at $p$ with respect to the star subdivision of $\sigma$ along $\bR_{\geq 0}v'_0$. 
        This will introduce a non-foliation-invariant exceptional divisor whose center is $x$. 
        Thus, $x\in\on{dicrit}(\cF_W)$. 
        \item ($\subseteq$) Let $\tau\in\Sigma$ be a cone with $(\tau,W)$ being non-dicritical and $x\in\cO_\tau$.  
        Then any exceptional divisor over $x$ is foliation-invariant by the moreover part of \cite[Lemma 4.15]{CC}. 
        Thus, $x\notin\on{dicrit}(\cF_W)$. 
    \end{itemize}
    
    Suppose $\on{rank}\cF\leq n-2$. 
    \begin{itemize}
        \item ($\supseteq$) Let $\tau\in\Sigma$ be a cone with $(\tau,W)$ being dicritical. 
        Let $v_0\in\on{Relint}(\tau)\cap W\cap N$. 
        If $\dim\tau\geq r+1$, then the star subdivision of $\Sigma$ along $\bR_{\geq 0}v_0$ introduces a non-foliation-invariant exceptional divisor whose center has dimension $n-\dim\tau\leq n-r-1$. 
        Thus, $V_\tau\subseteq\on{dicrit}(\cF_W)$. 

        If $\dim\tau\leq r$, then we take a general subvariety $Z$ of dimension $n-r-1$ on $V_\tau$. 
        By \cite[Lemma 4.15]{CC}, there is a semi-local model $(U_\sigma,p,W_p)$ of $(X,\cF)$ at a general point $x\in Z$ such that $Z$ is formally locally $V_{\tau'}$ where $\tau\preceq\tau'\preceq\sigma$ and $\rho\subseteq W$ for any $\rho\in\tau'(1)\setminus\tau(1)$. 
        Then blowing up along $Z$ will introduce an exceptional divisor which is not foliation-invariant and whose center has dimension $n-r-1$. 
        Thus, $Z\subseteq\on{dicrit}(\cF_W)$ and hence $V_\tau\subseteq\on{dicrit}(\cF_W)$. 

        \item ($\subseteq$) Let $E$ be an exceptional divisor whose center $Z$ has dimension $\leq n-r-1$. 
        Let $\tau\in \Sigma$ be a maximal one such that $Z\subseteq V_\tau$.  
        If $(\tau,W)$ is non-dicritical, then by the moreover part of \cite[Lemma 4.15]{CC}, we have $E$ is foliation-invariant. 
        Thus, $Z\nsubseteq\on{dicrit}(\cF_W)$ and therefore $V_\tau\nsubseteq\on{dicrit}(\cF_W)$. 
    \end{itemize}
\end{proof}

\section{Dicritical locus of Fano foliation of rank one}
In this section, we show the irreducibility of the dicritical loci and singular loci for Fano foliations, which are not necessarily toric, of rank one on normal projective varieties. 

\begin{proposition}\label{prop:Fano_rank_one_general}
    Let $\cF$ be a Fano foliation of rank one on a normal projective variety $X$. 
    Then $\operatorname{dicrit}(\cF) = \operatorname{Sing}(\cF)$ which is irreducible and thus connected. 
\end{proposition}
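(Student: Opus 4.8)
The plan is to prove the two inclusions $\on{dicrit}(\cF)\subseteq\on{Sing}(\cF)$ and $\on{Sing}(\cF)\subseteq\on{dicrit}(\cF)$, and then irreducibility. The first inclusion is formal and needs no positivity: if a subvariety $Z$ of dimension $\le n-r-1=n-2$ witnesses $\on{dicrit}(\cF)$ through a non-invariant prime divisor $E$ over it, then $\cF$ cannot be a subbundle of $\cT_X$ at the generic point of $Z$ — otherwise, $\cF$ being involutive, the Frobenius theorem would give $\cF=\langle\partial_1\rangle$ in suitable formal coordinates there, i.e. $\cF$ is the pullback of the point foliation along a smooth fibration in curves, and every prime divisor over a codimension-$\ge 2$ subvariety of such a local model is ``vertical'' for the fibration, hence invariant, contradicting the choice of $E$. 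Thus $Z\subseteq\on{Sing}(\cF)$, and since $\cF$ is saturated, $\on{Sing}(\cF)$ has codimension $\ge 2$, so its components automatically have dimension $\le n-r-1$.

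For the remaining statements I would use the structure theory of rank-one Fano foliations: $-K_\cF$ ample forces $\cF$ to be algebraically integrable with rational leaves (Bogomolov--McQuillan; Kebekus--Sol\'a Conde--Toma; Araujo--Druel), so $\cF$ is induced by a dominant rational map $\varphi\colon X\dashrightarrow Y$ with $\dim Y=n-1$ and general fibre $\bP^1$. The key consequence of ampleness is that for the closure $\overline\ell\cong\bP^1$ of any leaf, the saturated inclusion $\cO_X(-K_\cF)|_{\overline\ell}\hookrightarrow\cT_{\overline\ell}$ has cokernel supported on $\on{Sing}(\cF)\cap\overline\ell$, whence $-K_\cF\cdot\overline\ell=2-\on{length}(\text{cokernel})\ge 1$; so every leaf-closure meets $\on{Sing}(\cF)$ in at most one point, and the general leaf misses it entirely.

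Given this, for a component $Z$ of $\on{Sing}(\cF)$ and a general point $z\in Z$, the leaves through nearby points cannot pass through $z$ (that would make $\cF$ regular there) yet each can touch $\on{Sing}(\cF)$ only once, so they are forced to ``pivot'' around $Z$; a suitable weighted blow-up along $Z$ then extracts an exceptional divisor dominating $Y$, which is therefore non-invariant, so $Z\subseteq\on{dicrit}(\cF)$ and $\on{dicrit}(\cF)=\on{Sing}(\cF)$. For irreducibility I would reduce to dimension two: for a general curve $B\subseteq Y$ the invariant surface $S=\overline{\varphi^{-1}(B)}$ carries a rank-one foliation $\cF|_S$ with $-K_{\cF|_S}$ ample, and the classification of rank-one Fano foliations on surfaces (Brunella; McQuillan) says $\on{Sing}(\cF|_S)$ is a single, dicritical point; comparing $\on{Sing}(\cF|_S)$ with $\on{Sing}(\cF)\cap S$ through a Bertini-type argument, together with a check that $\on{Sing}(\cF)$ carries no lower-dimensional embedded or isolated components, forces $\on{Sing}(\cF)$ to be irreducible.

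I expect the main obstacle to be this last step — propagating the two-dimensional classification to arbitrary dimension and pinning down the components of $\on{Sing}(\cF)$ — together with making the ``leaves pivot around $Z$'' argument fully rigorous. This is precisely the place where no local combinatorial or $\delta$-lc-type criterion is available and the ampleness of $-K_\cF$ must be used globally, through the degree bound $-K_\cF\cdot\overline\ell\le 2$ on leaf-closures and the surface base case.
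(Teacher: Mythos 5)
Your first inclusion $\on{dicrit}(\cF)\subseteq\on{Sing}(\cF)$ is fine, and you correctly identify the two global inputs: $-K_\cF$ ample forces $\cF$ to be algebraically integrable with rational leaves, and $\deg\cT_{\bP^1}=2$ gives a numerical constraint on the leaves. But after that the argument diverges from a proof, and you have yourself flagged the two places where it does: the ``leaves pivot around $Z$'' step and the reduction to surfaces are heuristics, not arguments. The pivot step in particular does not hold up: from $\on{length}$ of a cokernel being $\le 1$ on a single leaf-closure you cannot conclude that a weighted blow-up along an arbitrary component $Z$ of $\on{Sing}(\cF)$ extracts a non-invariant divisor, nor that such a divisor dominates $Y$; and the degree computation itself is delicate because $\cO_X(-K_\cF)|_{\overline\ell}$ need not inject into $\cT_{\overline\ell}$ when $\overline\ell$ is non-normal or passes through $\on{Sing}(X)$. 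The surface-restriction route for irreducibility would additionally require checking that $\cF|_S$ is still Fano and relating $\on{Sing}(\cF|_S)$ to $\on{Sing}(\cF)\cap S$, neither of which is spelled out.

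The paper takes a cleaner path that resolves exactly these gaps in one stroke. It passes to a Property $(*)$-modification $\pi\colon Y\to X$ on which $\cG=\pi^{-1}\cF$ is induced by an equi-dimensional morphism $f\colon Y\to Z$, writes $\pi^*K_\cF=K_\cG+E+G$ with $E=\sum\iota(E_i)E_i$ and $G\ge 0$, and intersects with a general fibre $F\cong\bP^1$ of $f$:
\[
0>K_\cF\cdot\pi(F)=(K_\cG+E+G)\cdot F\ge -2+\sum\iota(E_i),
\]
so at most one exceptional divisor is non-invariant. Combined with the existence of a non-invariant exceptional divisor from \cite[Proposition 5.8]{CJV24}, there is \emph{exactly one}, $E_0$, and then $\on{dicrit}(\cF)=\pi(E_0)=\on{Sing}(\cF)$ is irreducible by construction. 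The key idea you are missing is precisely this counting of non-invariant exceptional divisors of a fixed modification via $K_\cF\cdot\pi(F)$; it replaces both your pivot argument and your surface base case simultaneously, and sidesteps the non-normal-leaf issue by working with the honest $\bP^1$-fibres of $f$ rather than leaf closures in $X$.
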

\begin{proof}
    Since $K_\cF$ is not pseudo-effective, by \cite[Theorem 3.1]{LLM23} (see also \cite[Theorem 1.1]{CP19}), there exists a non-zero algebraically integrable sub-foliation of $\cF$. 
    Since $\on{rank}\cF=1$, $\cF$ itself is algebraically integrable. 
    
    Let $\pi\colon Y\to X$ be a Property $(*)$-modification (see \cite[Definition 3.8 and Theorem 3.10]{ambro2021positivity}) such that $\cG:=\pi^*\cF$ is induced by an equi-dimensional morphism $f\colon Y\to Z$. 
    Moreover, we write $\pi^*K_\cF = K_\cG+E+G$ where $E=\sum\iota(E_i)E_i$ and $G$ is effective. 

    Note that the general fiber $F$ of $f$ is $\bP^1$ and 
    \[0 > K_\cF\cdot \pi(F) = (K_\cG+E+G)\cdot F \geq -2+\sum\iota(E_i).\]
    Thus, there is at most one $E_i$ with $\iota(E_i)=1$. 
    Moreover, by \cite[Proposition 5.8]{CJV24}, there exists a non-foliation invariant exceptional divisor and thus, there is exactly one non-foliation invariant exceptional divisor, say $E_0$. 
    So the dicritical locus of $\cF$ is $\pi(E_0)$, the center of $E_0$, which is irreducible. 

    Note that all fibers of $f$ are irreducible as $\cF$ is Fano. 
    Also there is no foliation singularity for $\cG$. 
    Thus, $\on{Sing}\cF = \pi(E_0)$, which is irreducible. 
\end{proof}

\bibliographystyle{amsalpha}
\addcontentsline{toc}{chapter}{\bibname}
\normalem
\bibliography{Fano}

\end{document}